%--------------------PRÉ_ÂMBULO------------------------

\documentclass[12pt]{amsart}
\setlength{\evensidemargin}{0in} \setlength{\oddsidemargin}{0in}
\setlength{\textwidth}{6.3in} \setlength{\topmargin}{0in}
\setlength{\textheight}{8.8in}

\usepackage{amsmath,amssymb,amsthm,amsfonts,enumerate,array,color,lscape,fancyhdr,layout,pst-all}
\usepackage{xcolor}
\usepackage[english]{babel}
\usepackage{young}
\usepackage{float}
\usepackage[all]{xy}
\usepackage[normalem]{ulem}
\usepackage[active]{srcltx}
\usepackage{mathrsfs}
\usepackage{etoolbox}
\usepackage[colorlinks=true, breaklinks=true, allcolors=blue]{hyperref}

\usepackage{xspace} %%por causa do espaçao depois dos comandos
\newlength\yStones
\newlength\xStones
\newlength\xxStones

\makeatletter
\def\Stones{\pst@object{Stones}}
\def\Stones@i#1{%
  \pst@killglue%
  \begingroup%
  \use@par%
  \setlength\xxStones{\xStones}%
  \expandafter\Stones@ii#1,,\@nil
  \endgroup
  \global\addtolength\xStones{0.6cm}%
  \global\addtolength\yStones{-7.5mm}}%
\def\Stones@ii#1,#2,#3\@nil{%
  \rput(\xxStones,\yStones){%
    \psframebox[framesep=0]{%
      \parbox[c][6mm][c]{11mm}{\makebox[11mm]{$#1$}}}}%
  \addtolength\xxStones{1.2cm}%
  \ifx\relax#2\relax\else\Stones@ii#2,#3\@nil\fi}
\makeatother

\setlength\fboxrule{0.4pt}
\def\Stone#1{\fbox{\makebox[12mm]{\strut#1}}\kern2pt}

%----------------- Macros-----------------------------

\newcommand{\C}{\mathbb{C}}

\newcommand{\Z}{\mathbb{Z}}

\newcommand{\A}{\mathbb{A}}

\newcommand{\g}{\mathfrak{g}}
\newcommand{\h}{\mathfrak{h}}

\newcommand{\gln}{\mathfrak{gl}_n}
\newcommand{\cV}{V}

\newcommand{\cO}{\mathcal{O}}
\newcommand{\gl}{\mathfrak{gl}}

\newcommand{\Ann}{\mathrm{Ann} \,}

\newcommand{\Spec}{\mathrm{Spec} \,}
\newcommand{\Hom}{\mathrm{Hom} \,}
\newcommand{\End}{\mathrm{End}}

\renewcommand{\k}{{\Bbbk}}
 
\newcommand{\Der}{\mathrm{Der}}
\newcommand{\cM}{\mathcal{M}}

\newcommand{\fxpartial}[2]{\frac{\partial #1}{\partial #2}}
\newcommand{\xpartial}[1]{\frac{\partial}{\partial #1}}
\newcommand{\cJ}{\mathcal{J}}
\newcommand{\cN}{\mathcal{N}}
\newcommand{\cP}{\mathcal{P}}
\newcommand{\aL}{\widehat{\mathrm{L}}}
\newcommand{\sAV}{\widehat{\mathcal{A}\mathcal{V}}}
\newcommand{\sAVd}{\widehat{\mathcal{A}\mathcal{V}}_{\mathit{diff}}}
\newcommand{\aAV}{A\cV}

\newcommand{\cD}{\mathcal{D}}
\newcommand{\cL}{\mathcal{L}}
\newcommand{\fm}{\mathfrak{m}}

\newcommand{\bk}{\Bbbk}
\newcommand{\GKdim}{\mathrm{GKdim}}
\newcommand{\rmd}{\mathrm{GKdim}}
\newcommand{\rmgr}{\mathrm{gr}\,}
\newcommand{\rmCh}{\mathrm{Ch}}
\newcommand{\aD}{D} %use for the algebra D
\newcommand{\hcL}{\widehat{\cL}}

\newcommand{\GL}{\mathrm{GL}}
\newcommand{\fsl}{\mathfrak{sl}}

\newcommand{\glnbund}{\mathcal{U}_{gl_N}}
\newcommand{\Zglnbund}{\mathcal{Z}_{gl_N}}
\newcommand{\aLA}{\widehat{\mathrm{L}}^A}

\newtheorem{theorem}{Theorem}[section]
\newtheorem{lemma}[theorem]{Lemma}
\newtheorem{corollary}[theorem]{Corollary}
\newtheorem{proposition}[theorem]{Proposition}
\newtheorem{conjecture}[theorem]{Conjecture}

\theoremstyle{definition}
\newtheorem{example}[theorem]{Example}
\newtheorem{remark}[theorem]{Remark}

\newtheorem{definition}[theorem]{Definition}
\allowdisplaybreaks
%-----------------------------------------------------

\begin{document}
\begin{title}[Differentiable holonomic $AV$-modules]{Differentiable holonomic $AV$-modules}
\end{title}

\author[Y. Billig]{Yuly Billig}
\address{Carleton University \\ Ottawa \\ Canada}
\email{}

\author[H. Rocha]{Henrique Rocha}
\email{}

%-----------------------------------------------------------------%

\begin{abstract}
We study differentiable holonomic sheaves of $A\cV$-modules on a smooth quasi-projective variety. We show that a simple differentiable holonomic sheaf $\cM$ of $A\cV$-modules is locally the tensor product of a simple holonomic $\aD$-module and a simple finite-dimensional $\gl_n$-module $W$. In particular, in the case when $W$ is integrable, $\cM$ is the tensor product of a simple holonomic $\cD$-module and the tensor module associated with $W$.
\end{abstract}

%-----------------------------------------------------------------%

\subjclass[2010]{17B10, 17B66, 32C38}
%\keywords{TBA, TBA, TBA}

%-------------------------bk----------------------------------------%

\maketitle

%-----------------------------------------------------------------%

% Sumário
\tableofcontents    % imprime o sumário

%-----------------------------------------------------------------%

%%introdução
\section*{Introduction}

The theory of $\cD$-modules has far-reaching applications in various fields, including algebraic geometry, representation theory, and mathematical physics. In essence, it offers a strong algebraic framework for working with differential operators and systems of differential equations on algebraic varieties. A $\cD$-module on a smooth algebraic variety can be viewed as a sheaf of modules over the sheaf of differential operators, encapsulating both the geometry of the variety and the action of vector fields in a coherent algebraic structure. One key feature of $\cD$-modules is that they are associated with an $\cO$-linear representation of the tangent sheaf, where $\cO$ denotes the structure sheaf of the variety.

Recently, this framework has been generalized to allow a more flexible relationship between derivations and functions. In the theory of $A\cV$-modules, the associated representation of the Lie algebra of vector fields is not assumed to be linear over the ring of regular functions. A primary example of an $A\cV$-module is a space of tensor fields on a variety. The algebraic theory of $A\cV$-modules over smooth affine varieties has been developed extensively in~\cite{BF18,BFN19,BNZ21,BIN23,BR23, BI23, XL23, BB24, BR24}.

The sheaf-theoretic version of $A\cV$-modules, which allows these structures to be defined on a not necessarily affine variety, was initiated in~\cite{BR23}, where the authors demonstrated that an $A\cV$-module that is finitely generated over the coordinate ring of an affine variety can be sheafified. This led to the construction of a sheaf of associative algebras $\sAV$, introduced in~\cite{BI23} and further studied in \cite{BB24}, which serves as a replacement for the smash product algebra in the affine case. Modules over $\sAV$, or simply $\sAV$-modules, are thus sheaf-theoretic analogs of $A\cV$-modules.

The development of the theory of $\sAV$-modules has led to a realization of the significance of the notion of differentiability. Differentiable modules form an important subcategory in the category of $\sAV$-modules. These are the modules for which the action of the tangent sheaf is a differential operator of finite order $N$, in the sense of Grothendieck. Differentiable $\sAV$-modules can be seen as a natural generalization of $\cD$-modules, which correspond to the case $N=0$. They play a central role throughout this paper, and we use this property to analyze the local structure of the main type of $\sAV$-modules we want to investigate, the holonomic $\sAV$-modules.

In this paper, we study the notion of the Gelfand-Kirillov (GK) dimension for $\sAV$-modules, which is a way to measure the growth of algebras and modules. We prove that the GK-dimension of any $\sAV$-module is bounded below by the dimension of the underlying variety. $\sAV$-modules with this minimal possible dimension are called holonomic, in analogy with holonomic $\cD$-modules. 

Our main results establish key structural properties of holonomic differentiable $\sAV$-modules. We prove that these modules have finite length. We also characterize the $\sAV$-modules
that are coherent over $\cO$, and show that these are necessarily holonomic and locally free, thus forming vector bundles on the underlying variety. We then study simple holonomic $\sAV$-modules. We prove that they are locally isomorphic to the tensor product of a simple holonomic $\aD$-module and a simple finite-dimensional $\gln$-module $W$. When $W$ is an integrable $\gln$-module, the simple holonomic $\sAV$-module is globally the tensor product of a simple holonomic $\cD$-module and a tensor module associated with $W$. If $W$ is not integrable, we can still give a global description in terms of a charged $\cD$-module and a tensor module associated with an $\fsl_n$-module. We conclude the paper by establishing a generalization of the main result in~\cite{BNZ21} and proving that a simple holonomic $\sAV$-module remains locally simple as a module over the tangent sheaf as long as it is not associated with a fundamental representation of $\gl_n$.

The paper is organized as follows. In Section~\ref{section:gkdim}, we establish the preliminary results about the Gelfand-Kirillov dimension that we will use in this paper. The sheaf of associative algebras $\sAV$ is introduced in Section~\ref{section:sheafAVdiffmodules}, where we also review the basic properties of differentiable $\sAV$-modules. Section~\ref{section:holavmodules} is focused on the general study of holonomic $\sAV$-modules. In Section~\ref{section:coherentavmod}, we investigate $\sAV$-modules that are coherent as $\cO$-modules. In the final Section~\ref{section:irreducibleholmodules}, we describe simple differentiable holonomic $\sAV$-modules.

Throughout, we work over an uncountable algebraically closed field $\bk$ of characteristic zero, and all varieties and sheaves are defined over $\bk$. Unless otherwise stated, $X$ denotes a smooth irreducible quasi-projective variety of dimension $n$, though in certain arguments we restrict to the affine case.

{\bf Acknowledgements:} 
Y.B. gratefully acknowledges support with a Discovery grant from the Natural Sciences and Engineering Research Council of Canada. H.R. thanks Carleton University for the excellent work environment. 

\section{Gelfand-Kirillov dimension}\label{section:gkdim}

Let $R$ be a unital associative algebra. A subspace $F \subset R$ is called a \emph{subframe} of $R$ if $\dim F < \infty$ and $1 \in F$. A subframe $F \subset R$ is a \emph{frame} if $F$ is a generating subset for the algebra $R$. For an $R$-module $M$, the \emph{Gelfand-Kirillov dimension} of $M$ is
\[
\GKdim_R(M) = \sup_{\substack{F \subset R \text{ subframe} \\ M_0 \subset M} } \overline{\lim_l} \log_l \dim_{\bk} F^{l}M_0,
\]
where $M_0 \subset M$ are finite-dimensional subspaces of $M$ and $\overline{\lim}$ denotes the limit superior. 
If the algebra $R$ is finitely generated by a frame $F$ and the $R$-module $M$ is generated by a finite-dimensional subspace $M_0$, then
\[
\GKdim_{R}(M) = \overline{\lim_l} \log_l \dim_{\bk} F^l M_0.
\]
 In this case, the Gelfand-Kirillov dimension does not depend on the choice of a frame or a generating subspace. 

\begin{lemma}
Let $R,\ S$ be associative algebras and let $M$ and $N$ be modules over $R$ and $S$, respectively. Then
\[
\max \left\{\GKdim_R(M), \ \GKdim_S(N) \right\} \leq \GKdim_{R\otimes S}\left (M\otimes_{\bk} N \right) \leq \GKdim_R(M) + \GKdim_S(N).
\]
\end{lemma}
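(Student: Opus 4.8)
The plan is to prove the two inequalities separately, in each case by comparing the filtrations that compute the respective Gelfand-Kirillov dimensions. I begin with the lower bound $\max\{\GKdim_R(M),\GKdim_S(N)\} \le \GKdim_{R\otimes S}(M\otimes_{\bk}N)$, and by symmetry it suffices to treat $\GKdim_R(M)$ (assuming $M,N\neq 0$, as the statement implicitly requires). Fix an arbitrary subframe $F \subset R$, a finite-dimensional subspace $M_0 \subset M$, and a nonzero vector $n_0 \in N$. Then $F \otimes \bk 1_S$ is a subframe of $R \otimes S$ and $M_0 \otimes \bk n_0$ is a finite-dimensional subspace of $M\otimes_{\bk}N$. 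The key computation is the identity $(F \otimes \bk 1_S)^l = F^l \otimes \bk 1_S$, which gives $(F \otimes \bk 1_S)^l(M_0 \otimes \bk n_0) = F^l M_0 \otimes \bk n_0$ and hence $\dim_{\bk}(F \otimes \bk 1_S)^l(M_0 \otimes \bk n_0) = \dim_{\bk} F^l M_0$. Applying $\overline{\lim_l}\log_l$ and taking the supremum over $F$ and $M_0$ yields $\GKdim_R(M) \le \GKdim_{R\otimes S}(M\otimes_{\bk}N)$; the symmetric argument with $\bk 1_R \otimes F'$ handles $\GKdim_S(N)$.

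For the upper bound I would begin from an arbitrary subframe $G \subset R \otimes S$ and a finite-dimensional subspace $P_0 \subset M\otimes_{\bk}N$, and first reduce to ``split'' data. Since $G$ is finite-dimensional, each of its elements is a finite sum of elementary tensors, so collecting the left and right factors appearing in a spanning set (and adjoining $1_R$ and $1_S$) produces subframes $F \subset R$ and $F' \subset S$ with $G \subseteq F \otimes F'$. In the same way, $P_0 \subseteq M_0 \otimes_{\bk} N_0$ for suitable finite-dimensional $M_0 \subset M$ and $N_0 \subset N$. The multiplicativity identity $(F \otimes F')^l = F^l \otimes F'^l$, proved by a short induction, then gives $G^l \subseteq F^l \otimes F'^l$, so that $G^l P_0 \subseteq F^l M_0 \otimes_{\bk} F'^l N_0$, and therefore $\dim_{\bk} G^l P_0 \le \dim_{\bk} F^l M_0 \cdot \dim_{\bk} F'^l N_0$.

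Taking logarithms gives $\log_l \dim_{\bk} G^l P_0 \le \log_l \dim_{\bk} F^l M_0 + \log_l \dim_{\bk} F'^l N_0$, and invoking the subadditivity of the limit superior, $\overline{\lim}(a_l + b_l) \le \overline{\lim}\,a_l + \overline{\lim}\,b_l$, yields $\overline{\lim_l}\log_l \dim_{\bk} G^l P_0 \le \GKdim_R(M) + \GKdim_S(N)$. Since $G$ and $P_0$ were arbitrary, the supremum defining $\GKdim_{R\otimes S}(M\otimes_{\bk}N)$ is bounded above by the right-hand side, completing the argument.

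I expect the only genuinely non-formal points to be the two splitting claims, $G \subseteq F \otimes F'$ and $P_0 \subseteq M_0 \otimes_{\bk} N_0$: both rest on the finite-dimensionality of $G$ and $P_0$ together with the fact that every element of a tensor product is a finite sum of simple tensors, so that only finitely many tensor factors occur and can be gathered into finite-dimensional subspaces. Everything else is routine: the multiplicativity $(F\otimes F')^l = F^l\otimes F'^l$ of powers of a pure-tensor subspace, the subadditivity of $\overline{\lim}$, and the dimension identity $\dim_{\bk}(U\otimes_{\bk}V) = \dim_{\bk}U\cdot\dim_{\bk}V$ for finite-dimensional $U,V$.
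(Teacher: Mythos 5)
Your proof is correct and follows essentially the same route as the paper's (much terser) argument: the lower bound via the subframes $F\otimes \bk 1_S$ and $\bk 1_R\otimes F'$, and the upper bound via the dimension identity $\dim_{\bk}(F^lM_0\otimes F'^lN_0)=\dim_{\bk}F^lM_0\cdot\dim_{\bk}F'^lN_0$ together with subadditivity of $\overline{\lim}$. The splitting reductions $G\subseteq F\otimes F'$ and $P_0\subseteq M_0\otimes_{\bk}N_0$, and the caveat that $M,N\neq 0$ is needed for the first inequality, are details the paper leaves implicit, and you handle them correctly.
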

\begin{proof} 
The first inequality follows from the definition of Gelfand-Kirillov dimension. The second follows from
\[
\overline{\lim} (x_l+y_l) \leq \overline{\lim} (x_l) +\overline{\lim} (y_l)
\]
and
\[
\log_l \dim (F^l M_0\otimes G^l N_0) =\log_l \dim (F^l M_0) + \log_l \dim (G^l N_0).
\]
\end{proof}

\begin{lemma}{\cite[Proposition 10.1.22]{NVO87}}\label{lemma:commalgebrarelation}
Let $M$ be finitely generated over a commutative algebra $R$, then $ \GKdim (M)= \GKdim (R/\Ann_R(M))$. 
\end{lemma}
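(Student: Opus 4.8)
The plan is to reduce to the faithful case and then prove two matching inequalities. Set $A = R/\Ann_R(M)$. Since $\Ann_R(M)$ acts by zero on $M$, the $R$-action factors through $A$, and for any subframe $F \subset R$ with image $\bar F \subset A$ one has $F^l M_0 = \bar F^l M_0$ as subspaces of $M$; conversely every subframe of $A$ lifts to a subframe of $R$. Thus the suprema defining $\GKdim_R(M)$ and $\GKdim_A(M)$ range over the same families of subspaces, so $\GKdim_R(M) = \GKdim_A(M)$. Moreover $M$ is finitely generated and \emph{faithful} as an $A$-module, since $\Ann_A(M) = 0$ by construction. It therefore suffices to prove $\GKdim_A(M) = \GKdim(A)$ for a faithful finitely generated module over the commutative algebra $A$, where $\GKdim(A) = \sup_F \overline{\lim}_l \log_l \dim F^l$ with $F$ ranging over subframes of $A$.

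For the upper bound $\GKdim_A(M) \le \GKdim(A)$, fix generators $m_1, \dots, m_k$ of $M$ over $A$. Given a subframe $F \subset A$ and a finite-dimensional $M_0 \subset M$, each element of $M_0$ is an $A$-combination of the $m_i$, so there is a subframe $F' \supseteq F$ with $M_0 \subseteq \sum_i F' m_i$, whence $F^l M_0 \subseteq \sum_i F'^{\,l+1} m_i$. Since $a \mapsto a m_i$ maps $F'^{\,l+1}$ onto $F'^{\,l+1} m_i$, we obtain $\dim F^l M_0 \le k \dim F'^{\,l+1}$. Passing to $\overline{\lim}_l \log_l$ (and using that replacing $l$ by $l+1$ does not affect the limit superior of $\log_l$) bounds this by $\GKdim(A)$, and taking the supremum over $F$ and $M_0$ gives the claim.

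For the lower bound $\GKdim(A) \le \GKdim_A(M)$, faithfulness enters. Consider the $A$-module map $\phi \colon A \to M^{\oplus k}$, $a \mapsto (a m_1, \dots, a m_k)$. If $\phi(a) = 0$ then, using commutativity, $aM = a\sum_i A m_i = \sum_i A(a m_i) = 0$, so $a \in \Ann_A(M) = 0$; hence $\phi$ is injective. Put $M_0 = \rmspan\{m_1, \dots, m_k\}$. For any subframe $F \subset A$, restricting $\phi$ to $F^l = F^l \cdot 1$ embeds it into $\bigoplus_i F^l m_i$, so $\dim F^l \le k \dim F^l M_0$. Applying $\overline{\lim}_l \log_l$ yields $\overline{\lim}_l \log_l \dim F^l \le \GKdim_A(M)$, and taking the supremum over $F$ gives $\GKdim(A) \le \GKdim_A(M)$, completing the proof.

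The main obstacle is the lower bound: the inequality $\GKdim(A) \le \GKdim_A(M)$ genuinely relies on faithfulness, and the injectivity of $\phi$ uses commutativity in an essential way, since it is what lets one move the acting element $a$ past a module coefficient. A secondary technical point is that, because neither $R$ nor $A$ is assumed finitely generated, both estimates must be carried out with arbitrary subframes and the supremum formulation of $\GKdim$ rather than with a single fixed frame.
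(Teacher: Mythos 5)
Your proof is correct: the reduction to the faithful case, the upper bound via generators, and the lower bound via the injective map $\phi\colon A \to M^{\oplus k}$ (where commutativity and faithfulness enter) are all sound, including the careful use of arbitrary subframes since $R$ is not assumed finitely generated. The paper gives no proof of its own, citing \cite[Proposition 10.1.22]{NVO87} instead, and your argument is essentially the standard one found there, so there is nothing to flag.
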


We say that an associative algebra $R$ has a \emph{good filtration} if $R$ has a filtration 
\[
R = \bigcup_{k \geq 0} R_k
\]
such that $\rmgr R$ is a finitely generated algebra. Similarly, we say that an $R$-module $M$ has a \emph{good filtration} if there exists a family of subspaces $M_i$, $i\geq -1$, with $M_{-1}=0$, $M_{i} \subset M_{i+1}$ such that
$M = \bigcup_{i=0}^{\infty}M_i$,
$R_rM_s \subset M_{r+s}$, and 
\[
\rmgr M = \bigoplus_{i\geq 0} M_i/M_{i-1}
\]
is finitely generated as a $\rmgr R$-module. 
\begin{lemma}{\cite[Corollary 1.1]{MS89}}
Let $M$ be a module over an associative algebra $R$. Suppose that both $M$ and $R$ have good filtrations and that $\rmgr R$ is a commutative algebra without nilpotent elements. Then, $\GKdim_{\rmgr R} (\rmgr M) =\GKdim_R(M)$.
\end{lemma}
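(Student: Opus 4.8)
The plan is to show that both Gelfand--Kirillov dimensions coincide with the growth rate of the common Hilbert function
\[
h(i) := \dim_\bk M_i = \dim_\bk (\rmgr M)_{\leq i},
\]
where $(\rmgr M)_{\leq i} = \bigoplus_{j\leq i} M_j/M_{j-1}$; the equality of these two dimensions is immediate from additivity of dimension, and it is the conceptual linchpin of the whole argument, since it says that $M$ and $\rmgr M$ carry the same Hilbert function. I first observe that, since $\rmgr R$ is finitely generated, lifting its generators to $R$ produces a finite generating set of $R$, and since $\rmgr M$ is finitely generated over $\rmgr R$, lifting module generators shows $M$ is finitely generated over $R$. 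Hence in each of $\GKdim_R(M)$ and $\GKdim_{\rmgr R}(\rmgr M)$ the supremum over subframes is attained, so I may compute each dimension with a single well-chosen frame. Throughout I use that finite generation, together with $\dim_\bk R_0 < \infty$, forces every $R_k$ and every $M_i$ to be finite-dimensional, so that $h(i)$ is finite.

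\emph{Graded side.} I choose homogeneous algebra generators of $\rmgr R$, together with a basis of $R_0$, to form a frame $G$, and homogeneous $\rmgr R$-module generators of $\rmgr M$ to span a finite-dimensional $N_0$. A degree count then sandwiches the frame powers between two truncations of the grading: there are constants $0 < a \leq b$ with $(\rmgr M)_{\leq al} \subseteq G^l N_0 \subseteq (\rmgr M)_{\leq bl}$ for all $l$. Taking dimensions and applying $\overline{\lim}_l \log_l(\cdot)$ yields $\GKdim_{\rmgr R}(\rmgr M) = \overline{\lim}_i \log_i h(i)$. Commutativity of $\rmgr R$ (and, in the sharper statements of \cite{MS89}, the absence of nilpotents) guarantees that this growth rate is the genuine GK dimension; in the present argument only finite generation enters in an essential way.

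\emph{Filtered side.} I lift the homogeneous generators of $\rmgr R$ to elements of $R$ and adjoin a basis of $R_0$ to obtain a frame $F \subseteq R_e$ for some $e$, and I lift the module generators to span a finite-dimensional $M_0 \subseteq M_{i_0}$. The inclusions $F^l \subseteq R_{el}$ and $F^l M_0 \subseteq M_{el+i_0}$ are immediate from $R_aR_b \subseteq R_{a+b}$ and $R_rM_s \subseteq M_{r+s}$. The reverse inclusions $R_l \subseteq F^{cl}$ and $M_l \subseteq F^{cl}M_0$ are the crux: because the images of $F$ generate $\rmgr R$ and the images of $M_0$ generate $\rmgr M$ over $\rmgr R$, an induction on filtration degree expresses any element of $R_l$ (resp.\ $M_l$), modulo lower filtration, as a product of frame elements acting on $M_0$ whose length is bounded linearly in $l$. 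Sandwiching $\dim_\bk F^l M_0$ between quantities of the form $h(el+i_0)$ and $h(cl)$ then gives $\GKdim_R(M) = \overline{\lim}_i \log_i h(i)$, and comparison with the graded side completes the proof.

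The main obstacle is precisely this pair of reverse inclusions $R_l \subseteq F^{cl}$ and $M_l \subseteq F^{cl}M_0$, that is, recovering the entire filtration from bounded powers of a fixed frame. This is where finite generation of $\rmgr R$ as an algebra and of $\rmgr M$ as a $\rmgr R$-module is indispensable, and where the inductive bookkeeping must be done with care: one must keep the number of frame factors linear in the filtration degree and treat the degree-zero part $R_0$ separately (products of its elements remain in $R_0$ and so do not inflate the power needed). Once this is in place, the identity $\dim_\bk M_i = \dim_\bk (\rmgr M)_{\leq i}$ makes the equality of the two dimensions essentially automatic.
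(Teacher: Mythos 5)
There is a genuine gap, and it is precisely the point that makes \cite[Corollary 1.1]{MS89} nontrivial (the paper itself does not reprove this lemma but cites it). Your entire argument rests on the finiteness of the Hilbert function $h(i) = \dim_\bk M_i$, which you secure by asserting $\dim_\bk R_0 < \infty$. But that is not among the hypotheses: a good filtration only requires $\rmgr R$ to be a finitely generated algebra, and this does not force the graded pieces, nor $R_0 = (\rmgr R)_0$, to be finite-dimensional over $\bk$. This is exactly the situation in which the paper applies the lemma: for $R = \aD$ with the order filtration one has $\aD_0 = \cO(U)$, and $\rmgr \aD \cong \cO(U)[\xi_1,\dots,\xi_n]$ is a finitely generated $\bk$-algebra with infinite-dimensional graded components, so $h(i) = \infty$ for all $i$ and both of your sandwich estimates are vacuous. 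The ``crux'' reverse inclusions $R_l \subseteq F^{cl}$ and $M_l \subseteq F^{cl}M_0$ are then impossible on dimension grounds alone, since $F^{cl}$ and $F^{cl}M_0$ are finite-dimensional while $R_l$ and $M_l$ are not. What your argument actually proves is the classical statement for filtrations with finite-dimensional components (as in Krause--Lenagan), which is strictly weaker than the lemma as stated and insufficient for its use in Lemma~\ref{lemma:bersteininequality}.

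Your dismissal of the ``without nilpotent elements'' hypothesis as inessential is a symptom of the same problem: that hypothesis is what makes the infinite-dimensional case work. The difficulty is to control $\dim_\bk F^l M_0$ for an \emph{arbitrary} finite-dimensional subframe $F \subset R$, which is in general not adapted to the filtration; the filtration's Hilbert function cannot supply this control when its pieces are infinite-dimensional. In \cite{MS89} one passes to principal symbols and reduces, via the minimal primes of the semiprime commutative ring $\rmgr R$, to the case where $\rmgr R$ is a domain; there the symbol map is multiplicative, $\sigma(ab) = \sigma(a)\sigma(b) \neq 0$, so the growth of $F^l M_0$ can be compared with the growth of a finitely generated graded submodule of $\rmgr M$. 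With nilpotents present, symbols can multiply to zero (orders drop under multiplication), and this comparison breaks down --- which is why commutativity and semiprimeness appear in the hypotheses. To repair your proof you would need to supply this symbol-theoretic mechanism (or an equivalent one); the identity $\dim_\bk M_i = \dim_\bk (\rmgr M)_{\leq i}$, correct as it is, cannot carry the argument.
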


\section{The sheaf $\sAV$ and differentiable modules}
\label{section:sheafAVdiffmodules}

For a commutative algebra $B$, we denote the structure sheaf of $B$ by $\cO_B$. For each $f \in B$, $f\neq 0$, we denote by $D(f) = \{ \mathfrak{p} \in \Spec(B) \mid f \notin \mathfrak{p} \}$ a basic open set. If $M$ is a $B$-module, we denote by $\widetilde{M}$ the sheaf of $\cO_B$-modules associated to $M$ on $\Spec(B)$. Explicitly, $\widetilde{M}(D(f)) = B_f \otimes_B M$ is the localization of $M$ by the multiplicative set $\{f^k \mid k\geq 0\}$. Let $X$ be a scheme with structure sheaf $\cO_X$. A sheaf $\cM$ of $\cO_X$-modules is called \emph{quasi-coherent} if there exists an affine open cover $\{U_{\alpha} \}$ of $X$ and modules $M_{\alpha}$ over $\cO_X(U_{\alpha})$ such that $\cM|_{U_{\alpha}} = \widetilde{M_{\alpha}}$. If each $M_{\alpha}$ is finitely generated over $\cO_X(U_{\alpha})$, $\cM$ is called \emph{coherent}. For an introduction to varieties and sheaves, we refer to~\cite{Mum99}. 

If $\g$ is a Lie algebra, then $U(\g)$ is a Hopf algebra. If $B$ is a commutative algebra on which $\g$ acts by derivations, then the smash product $B \# U(\g)$ is an associative algebra defined on the vector space $B \otimes U(\g)$ with the product
\[
(f \otimes u)(g \otimes v)= \sum_{i} f u_i^{(1)}(g)  \otimes u_i^{(2)}v,
\]
where $\displaystyle u \mapsto \sum_{i} u_i^{(1)} \otimes u_i^{(2)}$ is the coproduct. We denote the element $f \otimes u \in B \# U(\g)$ by $f\# u$. 
One can easily check that $B \# \g$ is a Lie subalgebra in $B \# U(\g)$.
For more details about Hopf algebras and smash products, we refer to~\cite{DNR01}.

Suppose that $X$ is an affine variety with coordinate ring $A$ and $V = \Der(A)$. An \emph{$A\cV$-module} on $X$ is a module over the associative algebra $A \#U(V)$. The category of $A\cV$-modules is closed under tensor products $M_1 \otimes_A M_2$ and taking dual modules $M^{\ast} =\Hom_{A}(M,A)$, see \cite{BFN19}. Furthermore, for a $\cV$-module $W$, we can define an induced $A\cV$-module 
\[
M = A \# U(\cV) \otimes_{U(\cV)} W.
\]
In this case, $M \cong A \otimes_{\bk} W$ as a vector space, the $A$-module action is given by multiplication in the left factor and
\[
\eta(f \otimes w) = \eta(f) \otimes w + f \otimes \eta w, \ \eta \in \cV, \ f \in A, \ w \in W.
\]

 In this paper, $X$ is a smooth irreducible quasi-projective algebraic variety of dimension $n$ with structure sheaf $\cO$, and tangent sheaf $\Theta$. A \emph{sheaf of $A\cV$-modules} on $X$ is a quasi-coherent sheaf $\cM$ such that $M=\cM(U)$ is an $A\cV$-module on $U$ for each affine open set $U\subset X$ such that $\cM|_U = \widetilde{M}$.

Sheaves of $A\cV$-modules as defined above were introduced in~\cite{BR23}, where the authors showed that if an $A\cV$-module $M$ is finitely generated as a module over the coordinate ring of an affine variety $X$, then $\widetilde{M}$ is a sheaf of $A\cV$-modules.

We want to study a geometric version of the theory of $A\cV$-modules with the assistance of the topologically quasi-coherent sheaf $\sAV$ of associative algebras constructed in~\cite{BI23} and elaborated in~\cite{BB24}. To provide a local description of the sheaf $\sAV$, we require the concept of étale charts. 

\begin{definition}
Let $U \subset X$ be an affine open set. We say that $U$ is an \emph{étale chart} if there exist $x_1,\dots,x_n \in \cO(U)$, called \emph{uniformizing parameters}, such that
\begin{enumerate}
\item the set $\{ x_1,\dots,x_n \}$ is algebraically independent;
\item every $f \in \cO(U)$ is algebraic over $\bk[x_1,\dots,x_n]$;
\item derivations $\xpartial{x_1},\dots,\xpartial{x_n}$ extend to a derivation of $\cO(U)$.
\end{enumerate}
\end{definition}

Since $\cO(U)$ is algebraic over $\bk[x_1,\dots,x_n]$, the extensions of $\xpartial{x_1},\dots,\xpartial{x_n}$ are unique. Additionally, $\Der(\cO(U))$ is a free module over $\cO(U)$ with basis $\partial_{x_1} = \xpartial{x_1},\dots,\partial_{x_n} = \xpartial{x_n}$. Any smooth irreducible quasi-projective variety $X$ admits a finite open cover of étale charts.

Let 
\[
L^A_+ = \fm \Der_{A}(A[X_1,\dots,X_n]) = \bigoplus_{i=1}^n \fm \xpartial{X_i}
\]
be the Lie algebra of $A$-derivations of the $A$-algebra $A[X_1,\dots,X_n]$ with values in the ideal $\fm$ of $A[X_1,\dots,X_n]$ generated by $X_1,\dots,X_n$. When $A = \k$, we drop the superscript.
This Lie algebra $\Z$-graded, 
\[
L^A_+ = \mathop\oplus_{k=0}^\infty L^A_k
\]
and has a decreasing filtration by subspaces $L^A_{\geq s} = \mathop\oplus\limits_{k=s}^\infty L^A_k$. We will also need the completion of this Lie algebra
\[
\aLA_+ = A \widehat{\otimes} L_+ = \varprojlim\limits_{s} L^A_{\geq s} = \fm \Der_{A}(A[[X_1,\dots,X_n]]) .
\]
In~\cite{BB24}, the authors constructed a topologically quasi-coherent sheaf $\hcL_+$ on $X$ of \emph{virtual jets of vector fields}. Locally, 
\[
\hcL_+(U) = \aL^{\cO(U)}_+ =\left\{ \sum_{i=1}^n \sum_{k \in \Z_+^n \setminus \{0\}} f_{k,i} X^k \xpartial{X_i} \mid  f_{k,i}\in\cO(U)\right \}
\]
for each étale chart $U \subset X$.  For each $s$, $\aLA_{> s} = \fm^{s+1} \aL_+^A$ is an ideal of the Lie algebra $\aL_+^A$. This ideal induces a subsheaf on $X$ with sections $\cO(U) \widehat{\otimes} \mathrm{L}_{> s}$ for each étale chart $U \subset X$. Furthermore, the quotient $\aLA_+/\aLA_{>s}$ induces a coherent sheaf $\cL^s$ of Lie algebras with sections $\cL^s(U) \cong \cO(U) \otimes L^s$ in étale charts, where $L^s = \aL_+/\aL_{>s}$.

Let $U$ be an étale chart of $X$ with uniformizing parameters $x_1,\dots,x_n$, let $\aD$ be the algebra of differential operators on $U$, $A= \cO(U)$, and $\cV=\Theta(U)$. Consider the kernel $\Delta$ of the multiplication map $A \otimes A \rightarrow A$ and define the Lie algebra of jets of vector fields as
\[
\widehat{J}_U = \varprojlim_{k}A \# \cV / ( \Delta^k \otimes_{A} \cV).
\]
The completion of $A \# U(\cV)$ is 
\[
\sAV(U) =  A \# U(\widehat{J}_U) / \left< f \# s - 1 \# fs \mid f \in A, \ s \in \widehat{J}_U \right> . 
\]
By~\cite{BI23}, these completions glue into a topologically quasi-coherent sheaf of associative algebras $\sAV$ on $X$.

\begin{theorem}[\cite{BI23}]\label{theorem:localisotheorem}
Let $U$ be an étale chart of $X$ with uniformizing parameters $x_1,\dots,x_n$. Let $\aD$ be the algebra of differential operators on $U$. Then
\[ 
\sAV(U) \cong \aD \otimes_A U_A(\aLA_+).
\]
The isomorphism map is given by:
\[
\varphi\left(g \# f \xpartial{x_i} \right) =g f \xpartial{x_i} \otimes 1
+ \sum_{k \in \Z_{+}^N \backslash \{ 0 \}} \frac{1}{k!} \, g \frac{\partial^k f}{\partial x^k} \otimes X^k \xpartial{X_i},
\]
and its inverse by
\begin{align*}
\psi\left(g \xpartial{x_i} \otimes 1 \right) &= g \# \xpartial{x_i}, \\
\psi\left(g \otimes X^m \xpartial{X_i} \right) &=  (g \# 1) (1 \# x - x \# 1)^m \xpartial{x_i}.
\end{align*}
\end{theorem}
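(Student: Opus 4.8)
The plan is to verify by direct computation that the explicitly given maps $\varphi$ and $\psi$ are mutually inverse homomorphisms of associative algebras; the isomorphism is then immediate. The conceptual content of the formulas is a splitting of a jet of a vector field into its ``value'' and its higher-order Taylor data: the term $g f \xpartial{x_i}\otimes 1$ records the image of $f\xpartial{x_i}$ as an honest differential operator in $\aD$, while the coefficients $\tfrac{1}{k!}\tfrac{\partial^k f}{\partial x^k}$ are its remaining Taylor coefficients, landing in the formal jet algebra $U_A(\aLA_+)$. The key correspondence underlying $\psi$ is that the difference element $1\# x_j - x_j\# 1$, which generates the ideal $\Delta$ defining $\widehat{J}_U$, plays the role of the formal variable $X_j$ of $\aLA_+$.

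I would begin with $\psi$. Its source $\aD \otimes_A U_A(\aLA_+)$ is a tensor product over $A$ of two algebras given by generators and relations, so well-definedness of $\psi$ reduces to a check on relations: one must verify that $\psi$ is balanced over $A$, so that it factors through $\otimes_A$, and that it respects the defining relations of $\aD$ and of $U_A(\aLA_+)$. The latter reduces to showing that inside $\sAV(U)$ the elements $1\# x_j - x_j\# 1$ mutually commute and satisfy $[\xpartial{x_i}, \, 1\# x_j - x_j\# 1] = \delta_{ij}$, which are precisely the relations defining $\aLA_+$; this is a computation in $A\# U(\widehat{J}_U)$ modulo the smash relation $f\# s = 1\# fs$.

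Turning to $\varphi$, I would first extend the stated formula to an algebra map out of $A\# U(\widehat{J}_U)$ by prescribing it on the generators $g\# 1$ and $f\xpartial{x_i}$ and extending multiplicatively, and then check that it descends to the quotient $\sAV(U)$. The descent is the identity $\varphi(f\# s)=\varphi(1\# fs)$, whose verification combines the Leibniz rule $\frac{\partial^k(fg)}{\partial x^k}=\sum_{\ell\leq k}\binom{k}{\ell}\frac{\partial^\ell f}{\partial x^\ell}\frac{\partial^{k-\ell}g}{\partial x^{k-\ell}}$ for Taylor coefficients with the $A$-balancing of $\otimes_A$, where moving a function across the tensor sign through $\aD$ produces exactly the derivative corrections needed for the cancellation. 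With both maps well defined, the equalities $\varphi\circ\psi=\mathrm{id}$ and $\psi\circ\varphi=\mathrm{id}$ follow by evaluating the composites on the generators $g\xpartial{x_i}\otimes 1$, $g\otimes X^m\xpartial{X_i}$, and $g\#\xpartial{x_i}$ and substituting the two displayed formulas.

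The main obstacle is proving that $\varphi$ is multiplicative, equivalently that it preserves the Lie bracket of jets: one must check that $[\varphi(f\xpartial{x_i}),\varphi(g\xpartial{x_j})]$, computed in $\aD\otimes_A U_A(\aLA_+)$, equals $\varphi([f\xpartial{x_i},g\xpartial{x_j}])$. This is a genuine formal power series computation in which the contributions of the differential-operator factor $\aD$ and of the jet factor $\aLA_+$ must be collected and matched term by term; the bookkeeping is controlled by the relation $[\xpartial{X_i},X_j]=\delta_{ij}$ in $\aLA_+$, which reproduces precisely the cross terms arising when the derivations in $\aD$ differentiate the Taylor coefficients.
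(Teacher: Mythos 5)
The paper itself contains no proof of this theorem: it is imported from \cite{BI23} (the paper only reuses the expansion (\ref{Xp}) from there), so your proposal has to be judged on its internal coherence, and there it contains a concrete error. The elements $1\# x_j - x_j\#1$ that your verification of $\psi$ is built on do not exist in $\sAV(U)$: $1\#x_j$ is meaningless because $x_j \notin \widehat{J}_U$, and the expression $(1\#x - x\#1)^m \xpartial{x_i}$ in the statement is only shorthand for the binomial expansion $\sum_{k \leq m} (-1)^{m-k}\binom{m}{k}\, x^{m-k}\# x^k \xpartial{x_i}$, i.e.\ formula (\ref{Xp}). More seriously, the Heisenberg relation you assert, $[\xpartial{x_i},\, 1\#x_j - x_j\#1]=\delta_{ij}$, is false under any reading: using $(1\#\eta)(g\#u)=\eta(g)\#u + g\#\eta u$ one computes, already for $n=1$, $m=1$,
\[
[\,1\#\partial_x,\ 1\# x\partial_x - x\#\partial_x\,] \;=\; 1\#\partial_x - 1\#\partial_x \;=\; 0,
\]
and in general $\mathrm{ad}(1\#\xpartial{x_i})$ produces the same $\delta_{ij}$-type correction on both legs of the difference, so the contributions cancel. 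This vanishing is not incidental; it is forced by the target: in $\aD \otimes_A U_A(\aLA_+)$ the element $\xpartial{x_i}\otimes 1$ commutes with $1\otimes X^m\xpartial{X_j}$, and $A$ is central in $U_A(\aLA_+)$. So the correct checks for $\psi$ are that the elements (\ref{Xp}) commute with $g\#1$ and with $1\#\xpartial{x_i}$, and that among themselves they satisfy the vector-field brackets $[X^a\xpartial{X_i}, X^b\xpartial{X_j}] = b_i X^{a+b-e_i}\xpartial{X_j} - a_j X^{a+b-e_j}\xpartial{X_i}$ (here $e_i$ is the $i$-th coordinate vector). Those, not Weyl relations, are the defining relations: $X_j$ and $\xpartial{X_j}$ are not even elements of $\aLA_+ = \fm\,\Der_A(A[[X_1,\dots,X_n]])$, and the generators $X^a\xpartial{X_i}$ certainly do not mutually commute. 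Had you run the verification with $[\xpartial{x_i}, X_j]=\delta_{ij}$, you would be matching $\sAV(U)$ against $\aD$ tensored with a completed Weyl algebra rather than with $U_A(\aLA_+)$, and $\varphi\circ\psi=\mathrm{id}$ would fail.

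A second gap is that your generators-and-relations framing ignores the topology. $\sAV(U)$ is defined through the inverse limit $\widehat{J}_U = \varprojlim_k A\#\cV/(\Delta^k\otimes_A \cV)$, and $\varphi(g\#f\xpartial{x_i})$ is a genuinely infinite sum: for an \'etale chart, $f$ is only algebraic over $\bk[x_1,\dots,x_n]$, so its Taylor coefficients $\frac{1}{k!}\frac{\partial^k f}{\partial x^k}$ do not terminate, and the series converges only in the $\fm$-adic completion built into $\aLA_+ = A\widehat{\otimes} L_+$. Consequently $\varphi$ cannot be ``prescribed on generators and extended multiplicatively'': one must verify compatibility with the $\Delta$-adic filtration so that it extends continuously to $\widehat{J}_U$ and to the completed smash product, and similarly that $\psi$ is continuous, before checking the two composites on generators suffices (the generators only span topologically). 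With these two repairs --- replacing the Weyl relations by the bracket and commutation checks above, and adding the continuity arguments --- the overall architecture of your plan (explicit maps in both directions, descent through $f\#s=1\#fs$ via Leibniz/Fa\`a di Bruno bookkeeping, verification on generators) is the natural one and is of the same direct-computation kind as the argument in \cite{BI23}.
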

In~\cite{BB24}, the authors gave the following explicit formula for the transformation law of the right-hand side of the isomorphism in Theorem~\ref{theorem:localisotheorem}. Suppose $U_1$ and $U_2 $ are étale charts with uniformizing parameters $x_1,\dots,x_n$ and $y_1,\dots,y_n$, respectively, and assume that 
\[
x_i = G_i(y), \quad y_i = H_i(x), \quad i =1,\dots,n,
\]
in $U_1 \cap U_2$. We will use the let symbols $X^k\xpartial{X_i}$ and $Y^k \xpartial{Y_i}$ to represent elements of $\aL_+$ in $\sAV(U_1)$ and $\sAV(U_2)$, respectively. Then, the transformation law for $\sAV$ from $U_1$ to $U_2$ is given by
\[
g(X)\xpartial{X_p} = \sum_{q=1}^n g( G(y+Y) - G(y) ) \fxpartial{H_q}{x_p} (G(y+Y)) \xpartial{Y_q},
\]
\[
\xpartial{x_i} = \sum_{j=1}^n \fxpartial{H_j}{x_i}(G(y)) \xpartial{y_i} + \left (\fxpartial{H_j}{x_i}(G(y+Y)) - \fxpartial{H_j}{x_i}(G(y)) \right ) \xpartial{Y_j},
\]
where 
\[
f(x+X) = \sum_{k \in \Z^n_+} \frac{1}{k!} \frac{\partial^k f}{\partial x^k}X^k
\]
denotes the Taylor expansion of $f$.

A sheaf $\cM$ on $X$ is called an $\sAV$-module if it is a quasi-coherent sheaf on $X$ that is a module over the sheaf of algebras $\sAV$. Similar to $\cD$-modules and $A\cV$-modules, the $\cO$-dual $\cM^{\ast} = \Hom_{\cO}(\cM,\cO)$ of an $\sAV$-module $\cM$ is an $\sAV$-module. Furthermore, the tensor product $\cM \otimes_{\cO} \cN$ is an $\sAV$-module if $\cM$ and $\cN$ are $\sAV$-modules.

The sheaf of algebras $\sAV$ contains a chain of sub-sheaves \[\mathcal{I}_1 \subset \mathcal{I}_2 \subset\cdots \subset \mathcal{I}_l \subset \cdots  \] with $\mathcal{I}_l(U)$ generated by $\Delta^l \otimes_A V$ in $\sAV(U)$. If $\cM$ is a sheaf of $A\cV$-modules, then we can always define an $\sAV$-module $\widehat{\cM}$ by 
\[
\widehat{\cM}= \varprojlim_{l} \cM / \mathcal{I}_l\cM.
\]
If $ \mathcal{I}_k\cM=0$ for large values of $k$, then $\widehat{\cM} = \cM$. Hence, the notions of $\sAV$-modules and sheaves of $A\cV$-modules coincide when this condition is met. This motivates the following definition.

\begin{definition}\label{definition:ndifferentiablemodule}
 Let $N\geq 1$. Let $X$ be affine and $M$ be an $A\cV$-module on $X$ with associated representation $\rho: \cV \rightarrow \End(M)$. We say that $M$ is $N$-\emph{differentiable} if
\[
 \sum_{k=0}^N (-1)^{N-k} \binom{N}{k} f^{k} \rho \left (f^{N-k}\eta \right ) =0
\]
 for every $f \in A$ and $\eta \in \cV$.

 Let $X$ be a quasi-projective variety. A sheaf of $A\cV$-modules $\cM$ is called $N$-differentiable if $\cM(U)$ is an $N$-differentiable $A\cV$-module on $U$ for each affine open set $U \subset X$. An $\sAV$-module $\cM$ is called $N$-differentiable if $\mathcal{I}_N \cM =0$.
\end{definition}

Note that an $A\cV$-module is $1$-differentiable if and only if it is a $\cD$-module. Furthermore, Definition~\ref{definition:ndifferentiablemodule} is equivalent to saying that $\rho$ is a differential operator of order less than or equal to $N-1$ in the sense of Grothendieck~\cite[Proposition 16.8.8]{Gro67}.

If $M$ is a differentiable $A\cV$-module on $X$, then $\widetilde{M}$ is an $\sAV$-module, as the next lemma shows.

\begin{lemma}{\cite[Proposition 26]{BI23}}\label{lemma:avmodulesheafifisdiff}
 Assume that $X$ is affine. Let $M$ be an $A\cV$-module with associated representation $\rho: \cV \rightarrow \End(M)$. If $M$ is $N$-differentiable, then the quasi-coherent sheaf $\widetilde{M}$ is both a sheaf of $A\cV$-modules and an $\sAV$-module. In particular, for each $f \in A$, the action of $A_f$ on $M_f=A_f \otimes_A M$ is given by localization, and the action of $\cV_f$ is given by
\[
\left( \frac{g}{f^k} \eta\right) m = \sum_{p=0}^{N} \sum_{l=0}^p (-1)^l \binom{p}{k} \frac{1}{f^{k(l+1)}} \rho\left(f^{kl} g \eta\right)m
\]
for every $g \in A$, $\eta \in V$, $k \geq 0$, and $m\in M_f$.
\end{lemma}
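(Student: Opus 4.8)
The plan is to reduce everything to the single structural fact that the non-$\cO$-linear part of the $\cV$-action is a \emph{differential operator of finite order}, and that such operators localize uniquely. Fix $f \in A$ and write $A_f$, $M_f = A_f \otimes_A M$, and $\cV_f = \Der(A_f) = A_f \otimes_A \cV$. The $A_f$-module structure on $M_f$ is the localization, so the entire content is the construction of a compatible $\cV_f$-action $\widetilde\rho$ extending $\rho$. For fixed $\eta \in \cV$ I would consider the $\bk$-linear map $T_\eta : A \to \End_\bk(M)$, $T_\eta(g) = \rho(g\eta)$, where $\End_\bk(M)$ carries the $A$-module structure given by left multiplication. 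A direct computation identifies the $N$-differentiability identity of Definition~\ref{definition:ndifferentiablemodule} with the vanishing of the iterated commutator $(\mathrm{ad}_f)^N T_\eta$ evaluated at $1$; since $\bk$ has characteristic zero, the polarization identity upgrades this single-$f$ relation to the statement that $T_\eta$ is a differential operator of order $\le N-1$ in the sense of Grothendieck. This is the input that makes localization possible.

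First I would define the action. For $m \in M$ I set $\widetilde\rho(\tfrac{g}{f^k}\eta)m$ to be the finite sum in the statement, which is manifestly an element of $M_f$; this is exactly the image of $\tfrac{g}{f^k}$ under the localized differential operator, obtained by applying the order-$(N-1)$ relation to truncate the iterated quotient rule. I then extend $\widetilde\rho(\xi)$ from $M$ to all of $M_f$ by forcing the Leibniz rule $\widetilde\rho(\xi)(am') = \xi(a)m' + a\,\widetilde\rho(\xi)(m')$ with $a \in A_f$ and $m' \in M$. The crux of the argument, and the step I expect to be the main obstacle, is well-definedness: one must check that the formula does not depend on the representation $\tfrac{g}{f^k} = \tfrac{gf}{f^{k+1}}$ of an element of $\cV_f$, that the resulting operator is well defined on the localization $M_f$ (that is, respects the relations $m/f^i = m'/f^{i'}$), and that $\xi \mapsto \widetilde\rho(\xi)$ is $\bk$-linear. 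Each of these reduces, after clearing denominators, precisely to the order-$(N-1)$ differentiability relation applied to suitable powers of $f$, and the finite truncation in the formula is what makes these identities consistent.

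Granting well-definedness, the remaining $A\cV$-module axioms — the smash relation $[\widetilde\rho(\xi), a] = \xi(a)$ for $a \in A_f$ and the Lie-bracket compatibility $[\widetilde\rho(\xi_1),\widetilde\rho(\xi_2)] = \widetilde\rho([\xi_1,\xi_2])$ — follow by the same mechanism: both hold on $M$ by hypothesis, and localizing a finite-order differential operator preserves its commutation relations, so the identities propagate to $M_f$. Uniqueness of the extension is automatic, since a differential operator of bounded order has at most one extension to the localization. Compatibility of the structures on $M_f$ and $M_{f'}$ for $D(f') \subset D(f)$ then follows from transitivity of localization together with this uniqueness, so $\widetilde M$ is a sheaf of $A\cV$-modules. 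Finally, localization preserves the order of a differential operator, so each section $M_f$ is again $N$-differentiable; hence $\mathcal{I}_N \widetilde M = 0$, and by the discussion preceding Definition~\ref{definition:ndifferentiablemodule} the completion $\widehat{\widetilde M}$ equals $\widetilde M$, which is therefore an $\sAV$-module.
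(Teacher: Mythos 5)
You should first note a structural point: the paper contains no proof of this lemma at all --- it is imported verbatim from \cite[Proposition 26]{BI23} --- so there is no in-paper argument to compare against, and your proposal must be judged on its own merits. On those merits, your plan is correct and is essentially the standard mechanism underlying the cited result. Your key reduction is sound: since the definition of $N$-differentiability quantifies over all $\eta \in \cV$, you may replace $\eta$ by $g\eta$, so the defining identity is exactly the operator identity $(\operatorname{ad}_f)^N T_\eta = 0$ for all $f$, and since the operators $\operatorname{ad}_{f_i}$ commute, the diagonal vanishing polarizes in characteristic zero to full multilinear vanishing, i.e.\ $T_\eta$ is a Grothendieck differential operator of order $\leq N-1$ --- precisely the equivalence the paper records after Definition~\ref{definition:ndifferentiablemodule} and in Proposition~\ref{lemma:ifxetalejustcheckunipar}\,(1)$\Leftrightarrow$(5). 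Unique localization of finite-order differential operators then correctly delivers existence of the extended action, the propagation of the smash and bracket identities from $M$ to $M_f$ (an identity between finite-order operators vanishing on the non-localized core vanishes on the localization), compatibility for $D(f') \subset D(f)$, and the closing step $\mathcal{I}_N \widetilde{M} = 0$, hence $\widehat{\widetilde{M}} = \widetilde{M}$, matching the discussion preceding Definition~\ref{definition:ndifferentiablemodule}.

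Two additions would make the argument complete. First, the well-definedness computations you defer are genuinely where the displayed formula earns its keep, and you accept that formula at face value; as printed, the binomial $\binom{p}{k}$ must be a typo for $\binom{p}{l}$, since with $\binom{p}{k}$ the formula returns $0$ already in the $\cD$-module case $N=1$, $k \geq 2$, where $A$-linearity forces the answer $\frac{1}{f^k}\rho(g\eta)m$ (which the $\binom{p}{l}$ version does produce). A cleaner route to both the formula and its well-definedness, closer in spirit to \cite{BI23}, is to use your own observation that $\mathcal{I}_N M = 0$ (Proposition~\ref{lemma:ifxetalejustcheckunipar}\,(1)$\Leftrightarrow$(3)) and expand $(1 \# f^k)^{-1}$ as the geometric series $\sum_{l \geq 0} (f^k \# 1 - 1 \# f^k)^l / (f^k \# 1)^{l+1}$, which truncates because $\Delta^N \otimes_A \cV$ annihilates $M$; one checks directly that the resulting expression agrees with the corrected displayed formula modulo the $N$-differentiability relation applied to $f^k$. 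Second, the paper's definition of a sheaf of $A\cV$-modules quantifies over \emph{all} affine opens $U$, not only basic opens $D(f)$; you should add the one-line remark that the $\Theta(U)$-action on $\cM(U)$ is obtained by gluing the actions over a cover of $U$ by basic opens, which agree on overlaps by your uniqueness statement, so the sheaf axiom applies. With these two points supplied, your proposal is a complete and faithful reconstruction of the cited proof.
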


The next proposition shows that we may use uniformizing parameters to prove that a module is $N$-differentiable when $X$ is étale.
\begin{proposition}\label{lemma:ifxetalejustcheckunipar}
Let $\cM$ be an $\sAV$-module. Suppose that $U$ is an étale chart of $X$ with uniformizing parameters $x_1,\dots,x_n$. Let $A = \cO(U)$, $\cV = \Der(A)$, and $M = \cM(U)$ with associated representation $\rho: \cV \rightarrow \End(M)$. Fix $N \geq 1$. Then, the following statements are equivalent:
\begin{enumerate}
	\item  for each $f\in A$ and $\eta \in \cV$,
	\[
	\sum_{k=0}^N (-1)^{k}\binom{N}{k} f^{N-k} \# f^k\eta
	\]
	annihilates $M$;
    \item for each $i=1,\dots,n$, and $p \in \Z_+^n$ with $|p| \geq N$, 
    \[
 \sum_{k \in \Z_+^n, \ k \leq p} (-1)^{k}  \binom{p}{k} x^{p-k} \rho \left (x^{k}\xpartial{x_i} \right ) =0;
\]
    \item $\mathcal{I}_N(U)M=0$;
    \item $\aL_{\geq N-1}M =0$;
    \item $\rho: \, V \rightarrow \End \, M$ is a differential operator in the sense of Grothendieck of order less than or equal to $N-1$.
\end{enumerate}
\end{proposition}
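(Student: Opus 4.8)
The plan is to route every condition through the single assertion that $\rho$ is a Grothendieck differential operator of order at most $N-1$. Since $\cV=\bigoplus_i A\partial_{x_i}$ is $A$-free, for each $i$ I set $\rho_i(g)=\rho(g\,\partial_{x_i})$ and form the associated left $A$-linear map $\widetilde\rho_i\colon A\otimes_{\bk}A\to\End M$, $\widetilde\rho_i(f\otimes g)=f\,\rho_i(g)$. The operators $\mathrm{ad}_f$ that measure the failure of $A$-linearity of $\rho$ act componentwise on the $\rho_i$, so $\rho$ has order $\le N-1$ if and only if each $\rho_i$ does, and by Grothendieck's description of differential operators through principal parts this is equivalent to $\widetilde\rho_i(\Delta^N)=0$ for every $i$. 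In particular this reformulates condition $(5)$, while the equivalence $(1)\Leftrightarrow(5)$ is the standard characteristic-zero polarization already recorded after Definition~\ref{definition:ndifferentiablemodule} (see \cite{Gro67}).

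Next I match the coordinate conditions with explicit values of $\widetilde\rho_i$. Writing $u^p=\prod_j(1\otimes x_j-x_j\otimes 1)^{p_j}$, a direct expansion gives $\widetilde\rho_i(u^p)=(-1)^{|p|}\sum_{k\le p}(-1)^{k}\binom{p}{k}x^{p-k}\rho(x^k\partial_{x_i})$, so the expression in $(2)$ is, up to sign, $\widetilde\rho_i(u^p)$. On the other hand, evaluating the inverse isomorphism $\psi$ of Theorem~\ref{theorem:localisotheorem} at $g=1$ shows that the basis element $X^p\partial_{X_i}$ of $\aL_{\ge N-1}$ (with $|p|\ge N$) acts on $M$ precisely as $\widetilde\rho_i(u^p)$; hence $(2)\Leftrightarrow(4)$. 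Moreover $\cI_N(U)$ is generated by $\Delta^N\otimes_A\cV$, whose action on $M$ is exactly $\widetilde\rho_i(\Delta^N)$, so the inclusion $\Delta^N\otimes_A\cV\subset\cI_N(U)$ makes $(3)\Rightarrow(5)$ immediate, and the inclusion $u^p\in\Delta^N$ for $|p|\ge N$ makes $(5)\Rightarrow(2)$ immediate.

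What remains, and is the main obstacle, is the upgrade $(4)\Rightarrow(3)$: passing from the vanishing of the coordinate generators $\aL_{\ge N-1}$ (equivalently the $u^p$) to the vanishing of the whole ideal $\cI_N(U)$ (equivalently all of $\Delta^N$). Here I would use that $U$ is an \'etale chart in two ways. First, the diagonal is a local complete intersection, so $\Delta$ is generated by $u_1,\dots,u_n$ and $\Delta^N$ is the ideal generated by $\{u^p:|p|=N\}$; correspondingly, under $\varphi$ the generating set $\Delta^N\otimes_A\cV$ maps into series supported on $\aL_{\ge N-1}$ only. Second, since $A$ is merely \'etale over $\bk[x_1,\dots,x_n]$ rather than polynomial, the right-hand factors are handled through the \'etale Taylor expansion $1\otimes g\equiv\sum_{|k|<L}\tfrac{1}{k!}(\partial^k_x g\otimes 1)\,u^k\pmod{\Delta^L}$, valid for all $g\in A$ with the uniquely extended derivatives $\partial^k_x g\in A$. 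Feeding this into $\widetilde\rho_i$ and using left $A$-linearity expresses the action of an arbitrary element of $\Delta^N$ as a combination of the terms $\widetilde\rho_i(u^{p'})$ with $|p'|\ge N$, each of which vanishes by hypothesis. The only remaining point is that these a priori infinite combinations reduce to finite sums on every section, which is exactly the continuity of the $\sAV$-module action (each local section is annihilated by $\cI_s$, equivalently by $\aL_{\ge s}$, for $s\gg 0$). This yields $\widetilde\rho_i(\Delta^N)=0$ and $\cI_N(U)M=0$, closing the cycle $(1)\Leftrightarrow(5)\Leftrightarrow(3)\Leftrightarrow(4)\Leftrightarrow(2)$. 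I expect this continuity-plus-\'etale step to be the technical heart of the argument, the earlier equivalences being essentially formal.
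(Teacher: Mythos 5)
Your overall architecture is sound and genuinely different in organization from the paper's proof (which establishes $(4)\Rightarrow(1)$ by an explicit Fa\`a di Bruno computation and $(1)\Rightarrow(2)$ by a specialization $f=\sum a_i x_i$ plus a linear-algebra lemma from \cite{Roc24}, rather than routing everything through the principal-parts criterion), but there is one genuine gap: the appeal to ``continuity of the $\sAV$-module action,'' i.e.\ that every local section is annihilated by $\cI_s$ for $s\gg 0$. This is not part of the paper's definition of an $\sAV$-module (a quasi-coherent sheaf with a module structure over the sheaf of algebras $\sAV$), and it is not automatic: the paper explicitly contemplates modules with $\cI_k\cM\neq 0$ for all $k$ (this is precisely why $\widehat{\cM}=\varprojlim\cM/\cI_l\cM$ is introduced), and quasi-coherence does not force sectionwise annihilation by some $\cI_s$. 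Moreover, this continuity property is so close in spirit to the differentiability that the proposition is designed to characterize that assuming it begs part of the question. It infects two of your steps: $(2)\Leftrightarrow(4)$, where the monomials $X^p\xpartial{X_i}$ with $|p|\geq N$ form only a \emph{topological} basis of the completed ideal $\aLA_{\geq N-1}=\fm^N\aLA_+$ --- their vanishing on $M$ gives vanishing of finite $A$-combinations, not of the infinite series that constitute a general element --- and $(4)\Rightarrow(3)$, where you truncate the Taylor expansion modulo $\Delta^L$ and need continuity to dispose of the tail.

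Both steps can be repaired without any continuity hypothesis, because condition (4) already concerns the completed ideal. For $(4)\Rightarrow(3)$, do not truncate: under the isomorphism of Theorem~\ref{theorem:localisotheorem}, the generator $(a\otimes b)\,u^p\otimes_A\xpartial{x_i}$ of $\cI_N(U)$ with $|p|=N$ is sent to the single element $\sum_{l\in\Z_+^n}\frac{1}{l!}\,a\,\frac{\partial^l b}{\partial x^l}\,X^{p+l}\xpartial{X_i}$, and this full Taylor series converges by the very definition of the completion to an element of $\aLA_{\geq N-1}$, which kills $M$ directly by (4). For $(2)\Rightarrow(4)$, use the paper's device: the monomials $X^p\xpartial{X_i}$, $|p|\geq N$, generate $\aLA_{\geq N-1}$ as a Lie ideal of $\aLA_+$ via $A$-linear combinations and finitely many brackets with (completed, series) elements of $\aLA_+$; and if $\xi M=0$, then $[\eta,\xi]m=\eta\xi m-\xi(\eta m)=0$ for arbitrary $\eta$, so bracket-generation preserves annihilation with no topology needed. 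With these two repairs your cycle closes: the identification of the expression in (2) with $\widetilde{\rho}_i(u^p)$ matches the paper's formula for the image of $X^p\xpartial{X_i}$, your principal-parts reformulation of (5) is correct, and your $(1)\Leftrightarrow(5)$ by polarization is exactly the content of the paper's citation of \cite[Lemma 12]{BI23} together with its linearization step.
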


\begin{proof}
By \cite[Theorem 22]{BI23}, the isomorphism between $\sAV(U)$ and $\cD(U) \otimes_A \, U_A(\aLA_+)$ sends
\begin{equation}
\label{Xp}
X^{p}\xpartial{X_i} \mapsto
 \sum_{k \in \Z_+^n, \ k \leq p} (-1)^{p-k}  \binom{p}{k} x^{p-k} \# x^{k}\xpartial{x_i}.
\end{equation}
It is straightforward to check that elements $X^{p}\xpartial{X_i}$ with $|p| \geq N$ 
and $i=1,\ldots,n$, generate ideal $\aLA_{\geq N-1}$ in Lie algebra $\aLA_+$.
Hence, $\aL_{N-1} M=0$ if and only if the action of the right-hand side on $M$ is zero for each $p\in \Z_+^n$ with $|p| \geq N$.
Thus, (2) and (4) are equivalent.

 Let us show that (4) implies (1). Suppose (4) holds. Then, $X^p \xpartial{X_i}M=0$ for each $|p| \geq N$. By Faà di Bruno's formula (see~\cite{FdB55}), we have that
\[
\frac{\partial^l (f^{k}g)}{\partial x^l} = \sum_{m=0}^{\min(|l|,k)} \frac{k!}{(k-m)!} f^{k-m} B(l,m,f,g),
\]
where $B(l,m,f,g)$ are polynomials in the partial derivatives of $f$ and $g$. Because
\[
\sum_{k=m}^{N}  (-1)^{k}\binom{N}{k} \frac{k!}{(k-m)!} = 0 
\]
if $m < N$, we can use the isomorphism of~\cite[Theorem 22]{BI23} and the Faà di Bruno's formula to get that  
\begin{align*}
 &\sum_{k=0}^N (-1)^{k}  \binom{N}{k} f^{N-k} \rho \left (f^{k} g \xpartial{x_i} \right ) \\
 =  &\sum_{k=0}^N (-1)^{k}  \binom{N}{k} \rho \left ( f^{N} g \xpartial{x_i} \otimes 1 \right ) 
 +  \sum_{k=0}^N \sum_{0 < |l| < N} (-1)^{k} \binom{N}{k} \frac{1}{l!} \rho\left( f^{N-k} \frac{\partial^l (f^{k}g)}{\partial x^l} \otimes X^l\xpartial{X_i} \right) \\
 = &  \sum_{0< |l| < N} \frac{1}{l!} \sum_{m=0}^{|l|}\left (  \sum_{k=m}^N  (-1)^{k}  \binom{N}{k} \frac{k!}{(k-m)!} f^{N-m}  \right ) \rho \left( B(l,m,f,g) \otimes X^l\xpartial{X_i} \right) = 0. 
\end{align*} 
Hence, (1) holds.

 Now assume that (1) holds for $N$. Then (1) also holds for all $N^\prime \geq N$: 
\[
 \sum_{k=0}^{N^\prime} (-1)^{k}  \binom{N^\prime}{k} f^{N^\prime-k} \rho \left (f^{k} g \xpartial{x_i} \right ) =0
\]
for every $f, g \in A$, $i=1,\dots,n$. In particular, this is true if $\displaystyle f= \sum_{i=1}^n a_i x_i$ and $g = 1$. By a linear algebra argument (see~\cite[Lemma 2.6.5]{Roc24}), for every $p \in \Z^n_+$ with $|p| \geq N$,
\[
 \sum_{k \leq p} (-1)^{k}  \binom{p}{k} x^{p-k} \rho \left (x^{k}\xpartial{x_i} \right ) =0.
\]
 Thus, (1), (2), and (4) are equivalent.

By \cite[Lemma 12]{BI23}, ideal $\Delta^N$ in $A \otimes A$ is generated as a left module
by elements of the form $(f_1 \otimes 1 - 1 \otimes f_1) \cdot \ldots \cdot (f_N \otimes 1 - 1 \otimes f_N)$, where $f_1, \ldots, f_N \in A$. Applying the linearization argument, we conclude that elements of the form $(f \otimes 1 - 1 \otimes f)^N$, 
$f \in A$, also generate $\Delta^N$ as a left module. Since $\mathcal{I}_N$ is generated by $\Delta^N \otimes_A V$, we conclude that (1) implies (3). Since elements (\ref{Xp}) with $|p| \geq N$ belong to $\mathcal{I}_N$, we obtain that (3) implies (2).

Finally, the equivalence of (1) and (5) follows from the definition of a differential operator in the sense of Grothendieck.
\end{proof}
\begin{remark}
Condition (4) in Proposition \ref{lemma:ifxetalejustcheckunipar} may be replaced with ($4^\prime$) $X^{p}\xpartial{X_i} M = 0$ for all $|p| = N$, $i=1,\ldots,n$, except for the case $n=1$ and $N=2$. Unless $n=1$ and $N=2$, such elements generate ideal $\aLA_{\geq N-1}$. In case when $n=1$ and $N=2$, we need to require $X^2 \xpartial{X} M = 0$ and $X^3 \xpartial{X} M = 0$.
\end{remark}

\begin{corollary}\label{corollary:equivndiffsheafandavhatmod}
There is an equality of categories between the category of $N$-differentiable sheaves of $A\cV$-modules and $N$-differentiable $\sAV$-modules.
\end{corollary}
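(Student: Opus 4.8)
The plan is to show that, on one and the same underlying quasi-coherent sheaf $\cM$, the data of an $N$-differentiable sheaf of $A\cV$-modules and the data of an $N$-differentiable $\sAV$-module coincide, and that the two notions of morphism agree; this yields an equality, and not merely an equivalence, of categories. Since both defining conditions are local and $X$ is covered by finitely many étale charts, it suffices to set up the correspondence on a single étale chart $U$ with $A = \cO(U)$, $\cV = \Der(A)$, and $M = \cM(U)$, and then to observe that it is compatible with the restriction maps of the quasi-coherent sheaf $\cM$.

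First I would pass from an $A\cV$-module to an $\sAV$-module. If $M$ is an $N$-differentiable $A\cV$-module, then Lemma~\ref{lemma:avmodulesheafifisdiff} (applied to the affine chart $U$) shows that $\widetilde{M}$ is an $\sAV$-module, so the $A \# U(\cV)$-action extends to an $\sAV(U)$-action. By the equivalence of conditions (1) and (3) in Proposition~\ref{lemma:ifxetalejustcheckunipar}, the $N$-differentiability of $M$ is exactly the statement $\mathcal{I}_N(U)M = 0$, so the resulting $\sAV$-module is $N$-differentiable in the sense of Definition~\ref{definition:ndifferentiablemodule}.

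Conversely, I would restrict an $\sAV$-module to an $A\cV$-module along the canonical algebra map $A \# U(\cV)\to\sAV(U)$ arising from the inclusion $\cV\hookrightarrow\widehat{J}_U$ into the completion. If $\cM$ is an $N$-differentiable $\sAV$-module, then $\mathcal{I}_N(U)M = 0$, and the implication (3)$\Rightarrow$(1) of Proposition~\ref{lemma:ifxetalejustcheckunipar} shows the restricted $A\cV$-module is $N$-differentiable. The two constructions are mutually inverse: the vanishing $\mathcal{I}_N(U)M = 0$, equivalently $\aL_{\geq N-1}M = 0$ by condition (4), forces the $\sAV(U)$-action to factor through the quotient $\sAV(U)/\mathcal{I}_N(U)$, on which the jet part is truncated at finite order. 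Under the isomorphism of Theorem~\ref{theorem:localisotheorem}, the maps $\psi\!\left(g \otimes X^m \xpartial{X_i}\right) =  (g \# 1)(1 \# x - x \# 1)^m \xpartial{x_i}$ express the surviving finitely many jet terms as elements built from $A \# U(\cV)$; hence the image of $A \# U(\cV)$ generates $\sAV(U)/\mathcal{I}_N(U)$, and the $\sAV$-action is uniquely recovered from the $A\cV$-action. Thus the object data on $\cM$ coincide.

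Finally, for morphisms, a map in either category is an $\cO$-linear morphism of the underlying quasi-coherent sheaves; it is $A\cV$-linear precisely when it commutes with the $A \# U(\cV)$-action on each chart, which by the determination just described happens precisely when it commutes with the $\sAV(U)$-action. Hence the two Hom-sets are equal. I expect the main obstacle to be verifying that extension and restriction are genuinely inverse to each other — that is, that once $\mathcal{I}_N$ annihilates the module the higher-jet action carries no information beyond the finite-order data — which is exactly where Proposition~\ref{lemma:ifxetalejustcheckunipar} and Theorem~\ref{theorem:localisotheorem} do the work; the compatibility with restriction on chart overlaps is then automatic, since both correspondences are given by the same natural formulas and $\sAV$ is a sheaf of algebras.
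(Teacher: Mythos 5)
Your proposal is correct and follows essentially the same route the paper intends: the corollary is stated without a separate proof precisely because it is the combination of Lemma~\ref{lemma:avmodulesheafifisdiff}, the equivalence (1)$\Leftrightarrow$(3)$\Leftrightarrow$(4) of Proposition~\ref{lemma:ifxetalejustcheckunipar}, and the remark preceding Definition~\ref{definition:ndifferentiablemodule} that $\widehat{\cM}=\cM$ once $\mathcal{I}_N\cM=0$, which is exactly the machinery you invoke. Your additional observation that the image of $A\#U(\cV)$ generates $\sAV(U)/\mathcal{I}_N(U)$ (via the elements $(1\# x - x\#1)^m\xpartial{x_i}$ of Theorem~\ref{theorem:localisotheorem}), so that the two module structures and the two Hom-sets determine each other, is a correct and welcome spelling-out of what the paper leaves implicit.
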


An $\sAV$-module $\cM$ is $N$-differentiable if and only if $\cM$ is locally a module over $\aD \otimes U(\aL_+/\aL_{\geq N-1})$. By~\cite{BI23}, these algebras define a quasi-coherent sheaf of algebras, which we denote by $\mathcal{AV}^{N}$. We refer to $\cM$ as an $\sAVd$-module if it is differentiable. In view of Corollary \ref{corollary:equivndiffsheafandavhatmod}, and since every finite $A\cV$-module is differentiable, the category of $\sAVd$-modules naturally appears as the primary subcategory of $\sAV$-modules to be investigated.	We point out that the category of differentiable $\sAV$-modules is not semisimple, and there exist $N$-differentiable modules for every $N$.

The next example shows that there exist $A\cV$-modules that cannot be turned into a sheaf of $A\cV$-modules.
\begin{example}
Suppose that $X = \Spec(A)$ with $A=\bk[x,x^{-1}]$, and let $L(\lambda)$ be the simple highest weight module over $\cV = \Der(\bk[x,x^{-1}])$ with highest weight $\lambda \in \bk^{\times}$. Then, $M= A \otimes L(\lambda)$ is an $A\cV$-module on $X$, where the $A$-module action is given by left-side multiplication and
\[
\eta(f \otimes w) = \eta(f) \otimes w + f \otimes \eta w, \ \eta \in \cV, \ f \in A, \ w \in L(\lambda).
\]
This is the induced module $M = A \# U(\cV) \otimes_{U(\cV)} L(\lambda)$. We claim that the $A\cV$-module $M$ is simple but not differentiable. We leave this as an exercise for the reader. The $\cO$-module $\widetilde{M}$ is not a sheaf of $A\cV$-modules and the $\sAV$-module $\widehat{\cM}$ is equal to zero.

\end{example}

\begin{lemma}\label{lemma:misdiffifquoandsubarediff}
Let $\cM$ be an $\sAV$-module and $\mathcal{N} \subset \cM$. If $\mathcal{N}$ and $\cM/\mathcal{N}$ are differentiable, then $\cM$ is differentiable.
\end{lemma}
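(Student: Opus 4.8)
The plan is to reduce to an affine étale chart, reformulate differentiability through the action of the graded Lie algebra $\aL_+$ via Proposition~\ref{lemma:ifxetalejustcheckunipar}, and then combine a two-step nilpotency with a generation statement for the commutator subalgebra $[\aL_{\geq c},\aL_{\geq c}]$. Say $\mathcal{N}$ is $N_1$-differentiable and $\cM/\mathcal{N}$ is $N_2$-differentiable. Fix an étale chart $U\subset X$ with uniformizing parameters $x_1,\dots,x_n$, and set $A=\cO(U)$, $\cV=\Der(A)$, $M=\cM(U)$, $K=\mathcal{N}(U)$. Since $U$ is affine and the sheaves are quasi-coherent, $\Gamma(U,-)$ is exact, so $0\to K\to M\to M/K\to 0$ is a short exact sequence of $\sAV(U)$-modules with $M/K\cong(\cM/\mathcal{N})(U)$. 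By the equivalence of conditions (1) and (4) in Proposition~\ref{lemma:ifxetalejustcheckunipar}, the hypotheses read $\aL_{\geq N_1-1}K=0$ and $\aL_{\geq N_2-1}(M/K)=0$, the latter being exactly $\aL_{\geq N_2-1}M\subseteq K$. It then suffices to show $\aL_{\geq m}M=0$ for some $m$, since by the same proposition this makes $M$ an $(m+1)$-differentiable module.

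Set $c=\max(N_1,N_2)-1$. Then $\aL_{\geq c}\subseteq\aL_{\geq N_1-1}$ and $\aL_{\geq c}\subseteq\aL_{\geq N_2-1}$, so $\aL_{\geq c}$ both annihilates $K$ and maps $M$ into $K$. Hence the composition of actions satisfies $\aL_{\geq c}\,\aL_{\geq c}\,M\subseteq\aL_{\geq c}K=0$; equivalently, the image of $\aL_{\geq c}$ in $\End M$ is a square-zero, hence abelian, Lie subalgebra. Consequently $[\aL_{\geq c},\aL_{\geq c}]$ acts by zero on $M$: for homogeneous $\eta,\zeta\in\aL_{\geq c}$ one has $[\eta,\zeta]m=\eta\zeta m-\zeta\eta m$, and both summands lie in $\aL_{\geq c}\aL_{\geq c}M=0$.

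The remaining point, which I expect to be the technical heart, is the graded generation statement $\aL_{\geq 2c+1}\subseteq[\aL_{\geq c},\aL_{\geq c}]$. By Proposition~\ref{lemma:ifxetalejustcheckunipar}(2) it is enough to realize each homogeneous generator $X^p\xpartial{X_j}$ of degree $|p|-1\geq 2c+1$ as a bracket of two elements of degree $\geq c$. For this I would use the single identity $[X^q\xpartial{X_j},X^s\xpartial{X_j}]=(s_j-q_j)\,X^{q+s-e_j}\xpartial{X_j}$: choosing multi-indices with $q+s=p+e_j$, $|q|,|s|\geq c+1$, and $q_j\neq s_j$ exhibits $X^p\xpartial{X_j}$, up to a nonzero scalar, as such a bracket. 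Since $|q|+|s|=|p|+1\geq 2c+3$, the mass can always be split to achieve $|q|,|s|\geq c+1$, so the only delicate requirement is $q_j\neq s_j$, which secures a nonzero coefficient; this is exactly what forces the bound on the degree to be $2c+1$ rather than $2c$ (for a pure power $X_j^{2c+1}\xpartial{X_j}$ in one variable the equal split is unavoidable and the bracket vanishes). Granting this, every homogeneous generator of degree $\geq 2c+1$ acts by zero, so $\aL_{\geq 2c+1}M=0$ and $M$ is $(2c+2)$-differentiable.

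Finally, the bound $N:=2\max(N_1,N_2)$ is independent of the chart, and $\mathcal{I}_N\cM=0$ may be checked on an étale cover, so $\cM$ is $N$-differentiable and in particular differentiable. Everything outside the generation lemma is formal; within it, the one genuinely combinatorial step is the choice of exponents securing $q_j\neq s_j$.
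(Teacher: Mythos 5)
Your proposal is correct and takes essentially the same route as the paper: reduce to an étale chart, set $k=\max(N_1,N_2)$, note that $\hcL_{\geq k-1}(U)$ acts square-zero on $\cM(U)$ (it maps $\cM(U)$ into $\mathcal{N}(U)$ and annihilates $\mathcal{N}(U)$), and then use $[\hcL_{\geq k-1}(U),\hcL_{\geq k-1}(U)]\supseteq \hcL_{\geq 2k-1}(U)$ together with Proposition~\ref{lemma:ifxetalejustcheckunipar} to conclude $2k$-differentiability, exactly the paper's bound. The only difference is presentational: via the identity $[X^q\xpartial{X_j},X^s\xpartial{X_j}]=(s_j-q_j)X^{q+s-e_j}\xpartial{X_j}$ you supply the combinatorial proof (including the $n=1$ boundary case that forces degree $2k-1$ rather than $2k-2$) of the commutator-generation fact that the paper asserts without proof.
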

\begin{proof}
Suppose that $\mathcal{N}$ is $r$-differentiable, $\cM/\mathcal{N}$ is $s$-differentiable and $U\subset X$ an étale chart. Set $k=\max \{r,s \}$. Because
$[\hcL_{k-1} (U),\hcL_{k-1}(U)]=\hcL_{2k-2}$ if $n>1$ and \break
$[\hcL_{k-1} (U),\hcL_{k-1}(U)]=\hcL_{2k-1}$ if $n=1$, we have that $\hcL_{2k-1}(U)\cM(U)=0$. We conclude that $\cM$ is $2k$-differentiable by Proposition \ref{lemma:ifxetalejustcheckunipar}.
\end{proof}

We call an $\sAV$-module $\cM$ \emph{simple} if it is non-zero and its only $\sAV$-subsheaves are the trivial subsheaves $0$ and $\cM$, and $\cM(U)$ is either zero or a simple module over $\sAV(U)$ for each étale chart $U \subset X$.

If $\cM$ is an $\sAV$-module such that $\sAV(U)$ is a simple module over $\sAV(U)$ for each étale chart $U$ in some cover of $X$, then it is not necessarily true that $\cM$ is a simple $\sAV$-module, as the following example shows.

\begin{example}
Suppose that $X= \mathbb{P}^1$ with the usual cover $U_0 = \Spec(\bk[x])$ and $U_1 = \Spec(\bk[y])$ with the gluing $x \mapsto y^{-1}$. Consider the $\sAV$-module by gluing the $\aAV$-module on $U_0$ $\bk[\partial_x]\delta_0$ of $\delta$-functions supported at $0 \in U_0$ and the $\aAV$-module on $U_1$ $\bk[\partial_y]\delta_{\infty}$ of $\delta$-functions supported at $y=0$. Note that $\cM(U_0\cap U_1) = 0$, so the gluing is trivial. Although $\cM(U_0)$ and $\cM(U_1)$ are simple $A\cV$-modules on $U_0$ and $U_1$, respectively, $\cM$ is not a simple $\sAV$-module.
\end{example}

\begin{remark}
Suppose $\cM$ is an $\sAV$-module with no nontrivial $\sAV$-subsheaves. We conjecture that, for every étale chart $U \subset X$, the $\sAV(U)$-module $\cM(U)$ is either zero or simple. In this paper, we will work under the assumption that both conditions hold for a simple $\sAV$-module. The analogous statement is true for holonomic $\cD$-modules
\cite[Proposition 3.1.7]{HTT08}
\end{remark}

\section{Holonomic $\sAVd$-modules}\label{section:holavmodules}

Suppose that $U \subset X$ is an étale open set with uniformizing parameters $x_1,\dots,x_n$. Let $A$ be the algebra of regular functions on $U$. In an étale chart,  $\cV=\Der(A)$ is a free module over $A$ with basis $\partial_{x_1},\dots,\partial_{x_n}$. Let $\aD = \cD(U)$ be the algebra of differential operators on $U$. Then $\aD$ is a filtered algebra with $\aD_0 = A$ and 
\[
\aD_k = \left \{T \in \End_{\bk}(A) \mid [T,g] \in \aD_{k-1} \ \forall g \in A \right \}.
\]
Here $D_k$ is the space of differential operators of order $k$ in the sense of Grothendieck.
We have that 
\[
\rmgr \aD \cong \mathrm{Sym}_A \cV \cong A[\xi_{1},\dots,\xi_{n}],
\]
where $\xi_i$ is the image of $\partial_{x_i}\in \aD$ in $\rmgr \aD$. In particular, this filtration is a good filtration on $\aD$. The cotangent bundle is $\mathrm{Var}(\rmgr \aD)$ and it is isomorphic to $U \times \A^n$.

Let $M$ be a $\aD$-module. Suppose that $M$ has a good filtration. Then, 
\[
\Ann_{\rmgr \aD}\rmgr M= \left \{ x \in \rmgr \aD \mid xv = 0 \ \forall v \in \rmgr M \right \}
\]
is an ideal of $\rmgr \aD$. The \emph{characteristic variety} $\rmCh(M)$ is the algebraic variety
\[
\rmCh(M) = \mathrm{Var}\left ( \sqrt{\Ann_{\rmgr \aD}\rmgr M} \right ).
\]
The characteristic variety is independent of the choice of a good filtration on $M$. Because every finitely generated $\aD$-module admits a good filtration, the characteristic variety is well-defined for every finitely generated $\aD$-module. It is well-known that the dimension of irreducible components of $\rmCh(M)$ is greater than or equal to $n$, see~\cite[Corollary 2.3.2]{HTT08}. The dimension of $\rmCh(M)$ is equal to the Krull dimension of $\rmgr \aD /  \sqrt{\Ann_{\rmgr \aD}\rmgr W}$, which is equal to the Gelfand-Kirillov dimension of $M$.

\begin{remark}
If $M$ is finitely generated as an $A$-module, then $M_d =M$ for $d \geq 0$ is a good filtration on $M$. This means that 
\[
\Ann_{\rmgr \aD}\rmgr
 M \cong  (\xi_1,\dots,\xi_n) \subset A[\xi_1,\dots,\xi_n],
\]
thus $\rmCh(M)$ is contained in the zero section $X \times \{0\}$ of the cotangent bundle.
\end{remark}

Let $\cM$ be an $\sAV$-module. We define the \emph{GK-dimension} $\rmd(\cM)$ of $\cM$
\[
\rmd(\cM) = \sup_{p \in X} \GKdim_{(\sAV)_p} (\cM_p).
\]
Consequently, if $\{ U_{\alpha} \}$ is a finite étale cover of $X$, then 
$$\rmd(M) = \max_{\alpha} \GKdim_{\sAV(U_{\alpha})} (\cM(U_{\alpha})).$$

\begin{lemma}\label{lemma:bersteininequality}
If $\cM$ is a non-zero $\sAV$-module, then $ \rmd(\cM) \geq n$.
\end{lemma}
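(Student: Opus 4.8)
The plan is to reduce the statement to the classical Bernstein inequality for $\cD$-modules. First I would observe that it suffices to exhibit a single étale chart $U \subset X$ with $M := \cM(U) \neq 0$ and to show $\GKdim_{\sAV(U)}(M) \geq n$. Indeed, by the remark following the definition of $\rmd$, we have $\rmd(\cM) = \max_{\alpha} \GKdim_{\sAV(U_{\alpha})}(\cM(U_{\alpha}))$ over a finite étale cover $\{U_{\alpha}\}$, and a non-zero quasi-coherent sheaf must have non-zero sections over at least one chart of this cover: if $\cM(U_{\alpha}) = 0$ for every $\alpha$, then $\cM|_{U_{\alpha}} = \widetilde{\cM(U_{\alpha})} = 0$ for all $\alpha$, forcing $\cM = 0$.

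Next I would use the local isomorphism of Theorem~\ref{theorem:localisotheorem}, which identifies $\sAV(U) \cong \aD \otimes_A U_A(\aLA_+)$. Under this identification the algebra of differential operators $\aD = \cD(U)$ embeds as the subalgebra $\aD \otimes 1$, so $M$ is in particular an $\aD$-module by restriction. The key formal point is that Gelfand–Kirillov dimension can only increase when the acting algebra is enlarged: every subframe of $\aD$ is also a subframe of $\sAV(U)$, so the supremum defining $\GKdim_{\sAV(U)}(M)$ is taken over a larger family than the one defining $\GKdim_{\aD}(M)$. Hence $\GKdim_{\sAV(U)}(M) \geq \GKdim_{\aD}(M)$, and it remains to bound the right-hand side.

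To do so I would pass to a cyclic submodule. Choosing $0 \neq m \in M$, the submodule $\aD m$ is a non-zero, finitely generated (in fact cyclic) $\aD$-module, so $\GKdim_{\aD}(M) \geq \GKdim_{\aD}(\aD m)$. Since $\aD$ carries the good filtration with $\rmgr \aD \cong A[\xi_1,\dots,\xi_n]$ recalled above, $\aD m$ admits a good filtration, its characteristic variety $\rmCh(\aD m)$ is defined, and $\GKdim_{\aD}(\aD m) = \dim \rmCh(\aD m)$. By the involutivity of the characteristic variety, i.e.\ the Bernstein inequality recalled in this section (see \cite[Corollary 2.3.2]{HTT08}), every irreducible component of $\rmCh(\aD m)$ has dimension at least $n$, so $\dim \rmCh(\aD m) \geq n$. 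Chaining the inequalities yields $\rmd(\cM) \geq \GKdim_{\sAV(U)}(M) \geq \GKdim_{\aD}(\aD m) \geq n$.

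Since the argument is essentially a reduction, there is no single hard calculation; the substantive input is the classical lower bound on the dimension of the characteristic variety of a finitely generated $\cD$-module, which is imported as a known result. The one place demanding care is the monotonicity of GK-dimension under the embedding $\aD \hookrightarrow \sAV(U)$: this is clean precisely because the definition of $\GKdim$ takes a supremum over all finite-dimensional subframes rather than over generating frames, so no finite-generation hypothesis on $\sAV(U)$ or on $M$ over $\sAV(U)$ is required, and the passage to the finitely generated submodule $\aD m$ is needed only to invoke the characteristic variety machinery.
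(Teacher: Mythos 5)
Your proposal is correct and follows essentially the same route as the paper's proof: reduce to an étale chart, restrict along the embedding $\aD \hookrightarrow \sAV(U)$ coming from Theorem~\ref{theorem:localisotheorem}, pass to the cyclic submodule $\aD m$, and conclude via $\GKdim_{\aD}(\aD m) = \dim \rmCh(\aD m) \geq n$, the Bernstein inequality of \cite[Corollary 2.3.2]{HTT08}. The only differences are cosmetic: you make explicit the subframe-monotonicity of $\GKdim$ under enlarging the algebra and the fact that a non-zero quasi-coherent sheaf has non-zero sections on some chart of the cover, both of which the paper leaves implicit.
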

\begin{proof}
Without loss of generality, we may assume that $X$ is an étale chart. Using the notation above, $M = \cM(X)$ is a module over $\sAV(X)\cong \aD \otimes_A U_A(\aLA_+)$. If $v \in M$ is non-zero, then $P = \aD v$ is a finitely-generated $\aD$-module. Hence, by Lemma~\ref{lemma:commalgebrarelation},
\begin{align*}
	& \GKdim_{\sAV}(M)  \geq \GKdim_{\aD}(P) =  \GKdim_{\rmgr \aD} (\rmgr P) = \GKdim_{\rmgr \aD /  \sqrt{\Ann_{\rmgr \aD}\rmgr P}} (\rmgr P) \\
	= & \GKdim \left (\rmgr \aD /  \sqrt{\Ann_{\rmgr \aD}\rmgr P}  \right )  = \dim\rmCh(P)  \geq n.
\end{align*}
\end{proof}

\begin{definition}
We say that an $\sAV$-module $\cM$ on $X$ is holonomic if $\cM$ is locally finitely generated and $\rmd(\cM) = \dim X$ or $\cM=0$. 
\end{definition}

In this paper, we will study differentiable holonomic $\sAV$-modules. In light of the results discussed in Section \ref{section:sheafAVdiffmodules}, differentiable modules provide a perfect setting to develop this theory. 

Suppose $X$ is étale and $\aD$ is the algebra of differential operators on $X$. Let $P$ be a $\aD$-module and $W$ an $\aL_+$-module such that $\aL_{\geq N-1} W =0$ for some $N\geq 1$. By Theorem~\ref{theorem:localisotheorem}, \[T(P,W) = P \otimes W \] is an $\sAV(X)$-module. The action of $\Theta(X)$ on $T(P,W)$ is given by
\[
f \xpartial{x_i} (p \otimes w) = \left ( f\xpartial{x_i} p \right )\otimes w + \sum_{k\in \Z_+^n \setminus \{0\} } \frac{1}{k!} \frac{\partial^k f}{\partial x^k}p \otimes X^k\xpartial{X_i} w,
\]
and $\cO(X)$ acts on $T(P,W)$ in the left factor. Note that the sum in the above formula is finite and $T(P,W)$ is an $N$-differentiable $A\cV$-module on $X$.
When $P = A$, we denote $T(A, W)$ simply by $T(W)$. Note that $T(P, W) \cong P\otimes_A T(W)$ as $\sAV(X)$-modules.

\begin{definition}
Let $W$ be an $\aL_+$-module such that $\aL_{\geq N-1}W =0$ for some $N \geq 1$. An $\sAV$-module $\cM$ is called \emph{of type $W$} if for each étale chart $U\subset X$ there exists a $\cD(U)$-module $P_U$ such that 
\[
\cM(U) \cong T(P_U,W).
\]
\end{definition}

The tensor module $\mathcal{J}^W$ constructed in \cite{BR24} (see Section~\ref{section:coherentavmod}) is an $\sAV$-module of type $W$. Furthermore, by~\cite{BB24}, a Rudakov module associated with a $\fm_p \Theta_p / \fm_p^N \Theta_p$-module $W$ is an $\sAV$-module of type $W \otimes \bk_1$, where $p\in X$ and $\fm_p$ denotes the maximal ideal of $\cO_p$ (see Example \ref{klambda} below for the definition of $\bk_1$).

Note that if $\cM$ is an $N$-differentiable $\sAV$-module of type $W$, then $W$ is a module over $L^{N-2} = \aL_+/\aL_{\geq N-1}$. In particular, if $\cM$ is $2$-differentiable then $W$ is a $\gl_n$-module.

\begin{lemma}\label{lemma:tensorholonomicgkdimlpluszero}
Suppose $X$ is étale and $\aD$ is the algebra of differential operators on $X$. Let $W$ be a finitely generated $\aL_+/ \aL_{\geq N-1}$-module and let $P$ be a holonomic $\aD$-module. If $T(P,W)$ is a holonomic $\sAV(X) \cong  \cD(U) \otimes_A \, U_A(\aLA_+)$-module, then $\GKdim_{U(\aL_+)}(W) = 0$ and $W$ is finite-dimensional. 
\end{lemma}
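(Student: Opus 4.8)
The plan is to show that the Gelfand--Kirillov dimension of $T(P,W)$ splits as a sum
\[
\GKdim_{\sAV(X)}(T(P,W)) = \GKdim_{\aD}(P) + \GKdim_{U(\aL_+)}(W).
\]
Once this additivity is established, the conclusion is immediate: $P$ holonomic gives $\GKdim_{\aD}(P)=\dim\rmCh(P)=n$, while $T(P,W)$ holonomic gives total dimension $n$, so the second summand must vanish, and a finitely generated module over the finitely generated algebra $U(\aL_+)$ has GK-dimension $0$ precisely when it is finite-dimensional.

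First I would record the algebra structure underlying $T(P,W)$. Since $\aL_{\geq N-1}W=0$, the module $T(P,W)$ is annihilated by $\mathcal{I}_N$, so the action factors through $\mathcal{AV}^N(X)=\sAV(X)/\mathcal{I}_N\cong \aD\otimes_A U_A(\aLA_+/\aLA_{\geq N-1})$, whence $\GKdim_{\sAV(X)}(T(P,W))=\GKdim_{\mathcal{AV}^N(X)}(T(P,W))$ and likewise $\GKdim_{U(\aL_+)}(W)=\GKdim_{U(\mathfrak{l})}(W)$, where $\mathfrak{l}=\aL_+/\aL_{\geq N-1}=L^{N-2}$ is finite-dimensional over $\bk$. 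Because the derivations in $\aLA_+$ are $A$-linear, $U_A(\aLA_+/\aLA_{\geq N-1})=A\otimes_\bk U(\mathfrak{l})$, so $\mathcal{AV}^N(X)\cong \aD\otimes_\bk U(\mathfrak{l})$ as a vector space. Using the explicit formulas of Theorem~\ref{theorem:localisotheorem} and the action formula for $T(P,W)$, I would verify that the subalgebra $\aD\otimes 1$ (generated by $A$ and the $\xpartial{x_i}$) commutes with the subalgebra $1\otimes U(\mathfrak{l})$ (generated by the $X^k\xpartial{X_j}$): these two families act on the two tensor factors of every $T(P',W')=P'\otimes W'$ independently, and testing on the faithful regular-type module $T(\aD,U(\mathfrak{l}))$ (with cyclic vector $1\otimes 1$) forces the commutators to vanish. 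Hence $\mathcal{AV}^N(X)\cong \aD\otimes_\bk U(\mathfrak{l})$ as algebras, and $T(P,W)\cong P\otimes_\bk W$ is the external tensor product of the $\aD$-module $P$ and the $U(\mathfrak{l})$-module $W$.

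Next I would pass to associated graded. Give $\aD$ its order filtration ($\rmgr\aD\cong A[\xi_1,\dots,\xi_n]$) and $U(\mathfrak{l})$ the PBW filtration ($\rmgr U(\mathfrak{l})\cong\mathrm{Sym}(\mathfrak{l})$), and equip $\mathcal{AV}^N(X)$ with the tensor-product filtration, so that $\rmgr\mathcal{AV}^N(X)\cong A[\xi_1,\dots,\xi_n]\otimes_\bk\mathrm{Sym}(\mathfrak{l})$, a finitely generated commutative algebra without nilpotents (a polynomial ring over the domain $A$). Choosing a good filtration on $P$ (it is holonomic) and on $W$ (finitely generated over $U(\mathfrak{l})$) and taking the tensor product yields a good filtration on $T(P,W)$ with $\rmgr T(P,W)\cong \rmgr P\otimes_\bk\rmgr W$. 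The reduction to associated graded of~\cite{MS89} together with Lemma~\ref{lemma:commalgebrarelation} then identifies $\GKdim_{\mathcal{AV}^N(X)}(T(P,W))$ with the dimension of $\mathrm{Var}(\Ann)$ of the $\rmgr\mathcal{AV}^N(X)$-module $\rmgr P\otimes_\bk\rmgr W$. Since the support of an external tensor product of modules is the product of supports (over an algebraically closed field), this variety is $\rmCh(P)\times\mathrm{Var}\!\left(\Ann_{\mathrm{Sym}(\mathfrak{l})}\rmgr W\right)$, whose dimension is $\dim\rmCh(P)+\GKdim_{U(\mathfrak{l})}(W)=n+\GKdim_{U(\aL_+)}(W)$, giving the desired additivity.

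I expect the main obstacle to be the structural step of the second paragraph: proving that the differential-operator part and the jet part genuinely commute, so that $\mathcal{AV}^N(X)\cong\aD\otimes_\bk U(\mathfrak{l})$ and $T(P,W)\cong P\otimes_\bk W$ as an honest external tensor product. This is what makes the GK-dimension additive: the general tensor-product estimate only yields the \emph{maximum} as a lower bound, whereas the commutative associated-graded description reduces additivity to the elementary fact that the dimension of a product variety is the sum of the dimensions. With that in hand, holonomicity of $T(P,W)$ forces $\GKdim_{U(\aL_+)}(W)=0$, and finite generation of $W$ over $U(\mathfrak{l})$ gives $\dim_\bk W<\infty$.
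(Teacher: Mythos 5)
Your proposal is correct, but it takes a genuinely different route from the paper's. The paper's own proof is a two-line reduction: since $\aL_{\geq N-1}W=0$, the module $M=T(P,W)$ is finitely generated over $\aD\otimes U(L^{N-2})$, hence in particular a module over $\aD\otimes U(\mathrm{L}_+)$ with $\mathrm{L}_+=(X_1,\dots,X_n)\Der(\bk[X_1,\dots,X_n])$, and the conclusion $\GKdim_{U(\mathrm{L}_+)}(W)=0$ and $\dim W<\infty$ is then quoted directly from \cite[Lemma 2.6]{BR24}. You instead prove the required growth estimate from scratch, via the exact additivity $\GKdim_{\sAV(X)}T(P,W)=\GKdim_{\aD}(P)+\GKdim_{U(L^{N-2})}(W)$, obtained by passing to the commutative associated graded (\cite{MS89} together with Lemma~\ref{lemma:commalgebrarelation}) and using that the support of an external tensor product is the product of supports. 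Your key structural step --- that $\mathcal{AV}^{N}(X)\cong\aD\otimes_{\bk}U(L^{N-2})$ as algebras, with $T(P,W)\cong P\otimes_{\bk}W$ an honest external tensor product --- is true and is in fact used implicitly by the paper (its finite-length proposition already writes the algebra as $\aD\otimes U(L^{N})$); it is verified most directly not by your faithfulness test but by computing commutators in the smash product using relation (\ref{Xp}): the commutators $[f\#1,\,x^{p-k}\# x^{k}\xpartial{x_i}]$ and $[1\#\xpartial{x_j},\,x^{p-k}\# x^{k}\xpartial{x_i}]$ sum against the alternating binomial coefficients to zero, so the constant-coefficient jets $X^{p}\xpartial{X_i}$ commute with $A$ and with the $\xpartial{x_j}$, hence with all of $\aD$ (your test on $T(\aD,U(L^{N-2}))$ also works, since under the identification that module is the left regular module and thus faithful). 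You are also right to sidestep the general tensor-product estimate, which by the paper's first lemma only gives the maximum as a lower bound; the commutative-graded detour is the standard correct fix, and it buys a sharper, self-contained statement that also subsumes Lemma~\ref{lemma:tpwholimplieswfindim}, at the cost of redoing the work the paper delegates to \cite{BR24}.
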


\begin{proof}
    $M = T(P,W)$ is a finitely generated module over the algebra $\aD \otimes U(L^{N-2})$. In particular, $M$ is a module over $\aD \otimes U(\mathrm{L}_+)$, where $\mathrm{L}_+ = (X_1,\dots,X_n)\Der(\bk[X_1,\dots,X_n])$. By \cite[Lemma 2.6]{BR24}, $\GKdim_{U(L_+)}(W) = 0$ and $\dim W < \infty$.
\end{proof}

\begin{lemma}\label{lemma:tpwholimplieswfindim} 
Suppose that $X$ is étale and let $\aD$ be the algebra of differential operators on $X$. Let $P$ be a $\aD$-module and $W$ be a finite-dimensional $\aL_+/ \aL_{\geq N-1}$-module. Then \[\GKdim_{\sAV(X)} T(P,W) = \GKdim_{\cD}(P). \]
\end{lemma}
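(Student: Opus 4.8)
The plan is to reduce to a finitely generated algebra and then exploit that the two tensor factors of $\sAV(X)$ act on $T(P,W)=P\otimes W$ through two \emph{commuting} families of operators, one of which sees only $P$ and the other only the finite-dimensional space $W$. Since $W$ is a module for $\aLA_+/\aLA_{\geq N-1}$, the module $T(P,W)$ is $N$-differentiable, so $\mathcal{I}_N(X)\subseteq \Ann_{\sAV(X)}T(P,W)$. Because GK-dimension is unchanged when the algebra is replaced by its quotient by an ideal contained in the annihilator, $\GKdim_{\sAV(X)}T(P,W)=\GKdim_{B}T(P,W)$, where $B=\sAV(X)/\mathcal{I}_N(X)\cong \aD\otimes_A U_A(\aLA_+/\aLA_{\geq N-1})$. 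As $\aLA_+/\aLA_{\geq N-1}$ is finitely generated as an $A$-Lie algebra, $B$ is a finitely generated algebra, and by Theorem~\ref{theorem:localisotheorem} the module structure on $T(P,W)=P\otimes W$ is $(D\otimes u)(p\otimes w)=Dp\otimes uw$.

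Next I fix a convenient frame of $B$. Writing $a_1,\dots,a_m$ for generators of $A$, the elements $a_1,\dots,a_m,\partial_{x_1},\dots,\partial_{x_n}$, embedded as $D\otimes 1$, generate the copy of $\aD$ acting on the left factor $P$, while the finitely many elements $1\otimes X^k\partial_{X_i}$ with $1\le |k|\le N-1$ generate the remaining factor and act only on $W$, via $p\otimes w\mapsto p\otimes X^k\partial_{X_i}w$. The key observation is that, although these two families need not commute inside $B$, they commute \emph{as operators on} $T(P,W)$: both $(D\otimes 1)(1\otimes u)$ and $(1\otimes u)(D\otimes 1)$ send $p\otimes w$ to $Dp\otimes uw$. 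Let $F=F_P+F_W$ be the frame spanned by $1$ together with these two families, where $F_P$ is the $\aD$-part and $F_W$ the jet part.

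For the upper bound, given a finite-dimensional $T_0$ I may enlarge it to $T_0=P_0\otimes W$ for a finite-dimensional $P_0\subseteq P$. Using operator-commutativity, every length-$l$ word in $F$ acts on $T_0$ as an $F_W$-part followed by an $F_P$-part; since $F_W^{b}(P_0\otimes W)\subseteq P_0\otimes W$, this gives $F^l T_0\subseteq (F_P^l P_0)\otimes W$, hence $\dim_{\bk}F^l T_0\le \dim_{\bk}W\cdot \dim_{\bk}F_P^l P_0$. Because $\log_l \dim_{\bk}W\to 0$, I obtain $\overline{\lim}_l\log_l\dim_\bk F^l T_0\le \GKdim_{\aD}(P)$, and after the standard reduction of an arbitrary subframe to powers of a fixed frame $F$ (with the attendant reindexing of $\log_l$), $\GKdim_B T(P,W)\le \GKdim_{\cD}(P)$. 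For the lower bound I use the subframe $F_P$ and the subspace $T_0=P_0\otimes w_0$ for a fixed nonzero $w_0\in W$; then $F_P^l(P_0\otimes w_0)=(F_P^l P_0)\otimes w_0$, so $\dim_\bk F_P^l T_0=\dim_\bk F_P^l P_0$ and $\GKdim_B T(P,W)\ge \GKdim_{\cD}(P)$. Combining the two inequalities proves $\GKdim_{\sAV(X)}T(P,W)=\GKdim_{\cD}(P)$.

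The main obstacle is bookkeeping rather than conceptual: one must verify carefully that the $\aD$-factor and the jet factor commute as operators on the module even though they are entangled inside $B$, and that the frame-dependent growth estimates survive the passage to the supremum over all subframes and all generating subspaces in the definition of GK-dimension. The finiteness of $\dim_\bk W$ is exactly what makes the jet directions contribute nothing to the growth, and it is used both in truncating to an $F_W$-invariant $T_0=P_0\otimes W$ and in absorbing the factor $\dim_\bk W$ into the logarithm.
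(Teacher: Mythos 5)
Your proof is correct, and it takes a route that differs from the paper's in its mechanics, though the governing idea is the same in both: the jet directions contribute only a bounded factor of $\dim_\bk W$ to the growth because $W$ is finite-dimensional, and the lower bound comes from $\aD$ being a subalgebra of $\sAV(X)$. What you do differently: you first quotient by $\mathcal{I}_N(X)$ (legitimate, since it annihilates $T(P,W)$ and GK-dimension is unchanged upon passing to a quotient by an annihilating ideal), obtaining a finitely generated algebra $B \cong \aD \otimes_A U_A(\aLA_+/\aLA_{\geq N-1})$ with a fixed frame $F = F_P + F_W$, and you then sort words using the fact that the operators $D \otimes 1$ and $1 \otimes X^k\xpartial{X_i}$ commute on $T(P,W)$ --- which does follow immediately from the explicit action formula, since $\aD$ acts on the left factor and the constant-coefficient jets act on the right factor. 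The paper instead stays in the full algebra $\sAV(X) \cong \aD \otimes_A U_A(\aLA_+)$ and takes an arbitrary subframe $Q \subset H \otimes K$: since $W$ is finite-dimensional and killed by $\aLA_{\geq N-1}$, the action of $K$ satisfies $K(1 \otimes W) \subset G \otimes W$ for some subframe $G \subset A$, and an induction yields $Q^k E \subset (GH)^k F \otimes W$; no quotient and no commutation are needed, because the jet action is absorbed into extra function multiplications that merely enlarge the effective $\aD$-frame to $S = GH$. Your version buys a transparent picture (a finitely generated algebra, a fixed frame, two commuting families, so the $\dim_\bk W$ factor is visible at a glance), at the cost of verifying the quotient identification and the commutation; the paper's version handles jets with $A$-coefficients uniformly and avoids both verifications. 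One small caution: your formula $(D\otimes u)(p\otimes w)=Dp\otimes uw$ is literally correct only for constant-coefficient $u$; for $u$ with coefficients in $A$ the functions must be moved onto the $P$ factor --- but since your frame $F_W$ consists of constant-coefficient jets and all $A$-coefficients are housed in $F_P$, this does not affect your argument.
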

\begin{proof}
Suppose that $Q \subset \sAV(X)$ is a subframe and $E \subset T(P, W)$ is a finite-dimensional subspace. Since $\sAV \cong D \otimes_A \, U_A(\aLA_+)$, there exist subframes $H \subset D$ and $K \subset U_A(\aLA_+)$, such that $Q \subset H \otimes K$. Likewise, there exists a finite-dimensional subspace $F \subset P$, such that $E \subset F \otimes W$.

Consider the action of $K$ on $T(W)$. There exists a subframe $G \subset A$, such that
$K (1 \otimes W \subset G \otimes W$. Then 
$Q E \subset HF \otimes_A (G \otimes W) = GH F \otimes W$.

Set $S = GH$, a subframe in $D$. By induction, we get $Q^k E \subset S^k F \otimes W$.
This implies that $\GKdim_{\sAV(X)} T(P,W) \leq \GKdim_{\cD}(P)$. Since for étale $X$, $D$ is a subalgebra in $\sAV(X)$, the opposite inequality holds as well.
\end{proof}

\begin{lemma}[Jacobson Density Theorem]
Suppose $\bk$ is uncountable. Let $P$ be a simple $\aD$-module, $\{p_1,\dots,p_s \} \subset P$ a linearly independent set and $w_1,\dots,w_s \in P$. Then, there exists $a \in \aD$ such that $a p_i = w_i$ for all $i = 1, \ldots, s$.
\end{lemma}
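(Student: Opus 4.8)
The plan is to reduce the statement to the classical Jacobson density theorem by first proving that the endomorphism division algebra $E := \End_{\aD}(P)$ equals $\bk$; the uncountability hypothesis enters precisely here, and everything else is the textbook density argument. Observe first that since $P$ is simple it is cyclic, $P = \aD v$ for any nonzero $v$, so $\dim_{\bk} P \leq \dim_{\bk} \aD$. Because $U$ is an affine open set of a variety of finite type, $A = \cO(U)$ is a finitely generated $\bk$-algebra and hence of countable dimension over $\bk$; since $\rmgr \aD \cong A[\xi_1,\dots,\xi_n]$ is again finitely generated, $\dim_{\bk} \aD = \dim_{\bk} \rmgr \aD \leq \aleph_0$, and therefore $\dim_{\bk} P \leq \aleph_0$ as well.

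First I would establish $E = \bk$, the version of Schur's lemma valid over an uncountable algebraically closed field. By Schur's lemma $E$ is a division algebra over $\bk$, and the evaluation map $\phi \mapsto \phi(v)$ embeds $E$ into $P$ (an $\aD$-linear endomorphism is determined by its value on the cyclic generator $v$, since $\phi(av)=a\phi(v)$), so $\dim_{\bk} E \leq \dim_{\bk} P \leq \aleph_0$. Suppose $E \neq \bk$ and pick $t \in E \setminus \bk$. As $\bk$ is algebraically closed, $t$ cannot be algebraic over $\bk$ (its minimal polynomial would split into linear factors, forcing $t \in \bk$), so $t$ is transcendental and $\bk(t) \hookrightarrow E$. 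Then $\{(t-\lambda)^{-1} : \lambda \in \bk\}$ is an uncountable $\bk$-linearly independent subset of $E$ by partial fractions, contradicting $\dim_{\bk} E \leq \aleph_0$. Hence $E = \bk$, so $\bk$-linear independence of the $p_i$ coincides with $E$-linear independence.

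With $E = \bk$ secured, I would run the density argument by induction on $s$ to show that $\Phi : a \mapsto (ap_1,\dots,ap_s)$ maps $\aD$ onto $P^s$; the claim follows immediately. For $s=1$ this is simplicity, $\aD p_1 = P$. For the inductive step, by symmetry and additivity it suffices to show, for the left ideal $I = \{a \in \aD : ap_1 = \cdots = ap_{s-1}=0\}$, that $I p_s = P$. Now $I p_s$ is an $\aD$-submodule of the simple module $P$, so it is $0$ or $P$, and it is enough to rule out $I p_s = 0$. If $Ip_s = 0$, then since the induction hypothesis makes $a \mapsto (ap_1,\dots,ap_{s-1})$ surjective onto $P^{s-1}$, the value $ap_s$ depends only on $(ap_1,\dots,ap_{s-1})$, giving a well-defined $\aD$-linear map $f : P^{s-1} \to P$ with $f(ap_1,\dots,ap_{s-1}) = ap_s$. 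Since $\Hom_{\aD}(P^{s-1},P) \cong E^{s-1} = \bk^{s-1}$, there are scalars $\phi_j \in \bk$ with $ap_s = \sum_j a\phi_j p_j$ for all $a$; taking $a=1$ yields $p_s = \sum_j \phi_j p_j$, contradicting the linear independence of $\{p_1,\dots,p_s\}$. Hence $I p_s = P$, completing the induction.

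The main obstacle is the first reduction, the identification $E = \bk$: this is exactly where uncountability of $\bk$ is indispensable, since over a countable field $E$ could be a nontrivial transcendental extension, and it is the only nonformal ingredient. Once $E = \bk$ is in hand, the remainder is the standard Jacobson density argument and presents no difficulty.
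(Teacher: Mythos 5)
Your proposal is correct and follows essentially the same route as the paper: both observe that $P$ has countable $\bk$-dimension (since $\aD$ does), deduce $\End_{\aD}(P) \cong \bk$ from the uncountable-field version of Schur's Lemma, and then invoke the classical Jacobson density theorem. The only difference is that the paper cites Bourbaki for the Schur step and Isaacs for the density step, whereas you supply the standard proofs of both (the partial-fractions argument with $\{(t-\lambda)^{-1}\}_{\lambda\in\bk}$, and the induction on $s$) inline, which is a matter of self-containedness rather than of approach.
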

\begin{proof}
    $P$ has a countable dimension as a $\bk$-vector space because $\aD$ has a countable dimension and $P$ is a simple $\aD$-module. Since $\bk$ is uncountable, it follows from Schur's Lemma that $\End_{\aD}(P) \cong \bk$ (see~\cite[Section VIII.3.2 Theorem 1]{Bou23}). The lemma follows from this isomorphism and the Jacobson Density Theorem, see~\cite[Theorem 13.14]{Isa09}.
\end{proof}

\begin{proposition}\label{proposition:containstensorproduct}
Suppose that $X$ is an étale chart and $\aD$ is the algebra of differential operators on $X$. Let $M$ be a holonomic $N$-differentiable $\sAV(X)$-module. Then $M$ contains a simple $\sAV(X)$-submodule isomorphic to $T(P,W)$, where $P$ is a simple $\aD$-module and $W$ is a simple finite-dimensional $L^{N-2}$-module.
\end{proposition}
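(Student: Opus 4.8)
The plan is to use the local factorization $\sAV(X)\cong \aD\otimes_A U_A(\aLA_+)$ of Theorem~\ref{theorem:localisotheorem}. Combining it with the hypothesis that $M$ is $N$-differentiable, $M$ becomes a module over $\aD\otimes U(L^{N-2})$, where $L^{N-2}=\aL_+/\aL_{\geq N-1}$ is finite-dimensional, and in this picture the subalgebras $\aD=\aD\otimes 1$ and $U(L^{N-2})=1\otimes U(L^{N-2})$ act by \emph{commuting} operators (the constant-coefficient jets $X^k\xpartial{X_i}$ are annihilated by every $\xpartial{x_j}\in\aD$). First I would produce a simple $\aD$-submodule of $M$. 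Restricting scalars along $\aD\hookrightarrow\sAV(X)$ can only decrease the Gelfand--Kirillov dimension, so $\GKdim_\aD(M)\leq\GKdim_{\sAV(X)}(M)=n$; hence for any nonzero $m$ the cyclic module $\aD m$ has $\GKdim_\aD(\aD m)\leq n$, while Bernstein's inequality for $\aD$-modules (the estimate $\dim\rmCh\geq n$ recalled above) gives $\GKdim_\aD(\aD m)\geq n$. Thus $\aD m$ is a holonomic $\aD$-module; being holonomic it has finite length, so it contains a simple $\aD$-submodule $P$, which is itself a cyclic holonomic $\aD$-module.

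Next I would organize the $P$-isotypic part of $M$ into a tensor module. Since $\bk$ is uncountable and $P$ has countable dimension, Schur's lemma yields $\End_{\aD}(P)=\bk$ (the input to the Jacobson Density Theorem established above). Put $U=\Hom_{\aD}(P,M)$; because $L^{N-2}$ commutes with $\aD$, the rule $(\ell\cdot\phi)(p)=\ell\,\phi(p)$ makes $U$ an $L^{N-2}$-module, and $U\neq 0$ as it contains the inclusion $P\hookrightarrow M$. The evaluation map $P\otimes U\to M$, $p\otimes\phi\mapsto\phi(p)$, is both $\aD$- and $L^{N-2}$-equivariant, hence an $\sAV(X)$-morphism, and the standard isotypic decomposition (using $\End_{\aD}(P)=\bk$) identifies it with an isomorphism onto the $P$-isotypic submodule $M_P\subseteq M$. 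In particular $M_P$ is realized as $T(P,U)$ inside $M$, and for every $L^{N-2}$-submodule $U_0\subseteq U$ the subspace $T(P,U_0)=P\otimes U_0$ is a genuine $\sAV(X)$-submodule of $M$.

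The crucial step, and the main obstacle, is to locate a \emph{finite-dimensional} $L^{N-2}$-submodule of $U$, since a priori $U$ may be infinite-dimensional; this is exactly where holonomicity enters. Choose $0\neq u\in U$ and set $U_0=U(L^{N-2})u$, a finitely generated $L^{N-2}$-module. Then $T(P,U_0)\subseteq M$ is finitely generated over $\sAV(X)$ (it is cyclic, generated by $p_0\otimes u$ for a generator $p_0$ of $P$), so $\GKdim_{\sAV(X)}T(P,U_0)\geq n$ by Lemma~\ref{lemma:bersteininequality}; being a submodule of the holonomic $M$ forces $\GKdim_{\sAV(X)}T(P,U_0)\leq n$. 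Hence $T(P,U_0)$ is holonomic, and Lemma~\ref{lemma:tensorholonomicgkdimlpluszero} (applied with $P$ holonomic and $U_0$ finitely generated) gives $\dim U_0<\infty$. A nonzero finite-dimensional $L^{N-2}$-module contains a simple submodule $W$, so $W$ is a simple finite-dimensional $L^{N-2}$-module and $T(P,W)=P\otimes W\subseteq M_P\subseteq M$ is an $\sAV(X)$-submodule.

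It remains to verify that $T(P,W)$ is simple, which I would deduce directly from the Jacobson Density Theorem above. Given $0\neq\xi=\sum_{i=1}^s p_i\otimes w_i$ with the $p_i\in P$ linearly independent, density furnishes $a\in\aD$ with $a p_1=q$ arbitrary and $a p_i=0$ for $i>1$, so $(a\otimes 1)\xi=q\otimes w_1$ and therefore $P\otimes w_1\subseteq\sAV(X)\xi$; applying $1\otimes U(L^{N-2})$ and using the simplicity of $W$ then yields $P\otimes W\subseteq\sAV(X)\xi$. Thus $T(P,W)$ is a simple $\sAV(X)$-submodule of $M$ of the required form, with $P$ a simple $\aD$-module and $W$ a simple finite-dimensional $L^{N-2}$-module. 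I expect the delicate points to be the identification $M_P\cong T(P,U)$ (needing $\End_{\aD}(P)=\bk$, hence the uncountability of $\bk$) and the finiteness argument forcing $\dim U_0<\infty$, both of which are handled above.
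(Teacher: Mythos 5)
Your proof is correct and follows essentially the same route as the paper: extract a simple holonomic $\aD$-submodule $P$ (Bernstein's inequality plus finite length of holonomic $\aD$-modules), use the commuting action of $U(L^{N-2})$ together with the Jacobson Density Theorem (via $\End_{\aD}(P)=\bk$, which is where uncountability of $\bk$ enters) to realize $U(L^{N-2})P$ as a tensor module, and then apply Lemma~\ref{lemma:tensorholonomicgkdimlpluszero} to a cyclic piece to force finite-dimensionality before passing to a simple $L^{N-2}$-submodule $W$. The only differences are presentational: the paper analyzes the kernel of the surjection $\Psi\colon T(P,U(L^{N-2}))\to M$, $v\otimes u\mapsto uv$, whereas you work dually with the multiplicity space $\Hom_{\aD}(P,M)$ and the cyclic submodule $U(L^{N-2})u$, and you additionally spell out the density argument showing that $T(P,W)$ is simple, a point the paper's proof leaves implicit.
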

\begin{proof}
Let $M_0$ be a generating subspace of $M$. Since $M$ is a module over $D \otimes U(\aL_+/\aL_{\geq N-1})$, $\aD M_0$ is a holonomic $\aD$-module. Therefore, $\aD M_0$ contains a simple $\aD$-module $P$ because it has finite length by~\cite[Proposition 3.1.2]{HTT08}. Consider the homomorphism of $\sAV(X)$-modules
\begin{align*}
\Psi: T(P,U(L^{N-2})) & \rightarrow M\\
v \otimes u \mapsto uv.
\end{align*}

  Take an element $w \in \ker \Psi$ and write $w = \sum_{i=1}^k v_i \otimes u_i$ where $\{v_1,\dots,v_k\}$ is a linearly independent set. By the Jacobson Density theorem, there exists $p \in \aD  \subset \sAV(X)$ such that $pw = v_1 \otimes u_1$. Therefore, for every $w \in \ker \Psi$, 
  $\sAV(X) w= P \otimes I_w$ for some left ideal $I_w$ of $U(L^{N-2})$. We conclude that the image of $\Psi$ is a non-zero holonomic $A\cV$-submodule of $M$ isomorphic to $T(P,W')$, where 
\[
W' = U(L^{N-2})/ \sum_{w \in \ker \Psi} I_w.
\]
By Lemma~\ref{lemma:tensorholonomicgkdimlpluszero}, $\displaystyle \dim W'  < \infty$. Take a simple $\aL_+$-submodule $W\subset W'$. Thus $M$ has a submodule isomorphic to $T(P,W)$.
\end{proof}

\begin{proposition}
If $\cM$ is a holonomic differentiable $\sAV$-module, then $\cM$ has finite length.
\end{proposition}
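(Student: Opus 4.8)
The plan is to reduce the global statement about finite length to a local one on an étale cover, then combine the local finiteness with a noetherian-style descending chain condition. Since $X$ admits a finite open cover of étale charts $\{U_\alpha\}$ and $\cM$ is locally finitely generated, it suffices to control subsheaves through their sections on each chart. On a single étale chart $U$, the module $M = \cM(U)$ is a holonomic $N$-differentiable $\sAV(U)$-module, so by Theorem~\ref{theorem:localisotheorem} it is a finitely generated module over $\aD \otimes U(L^{N-2})$ with $\GKdim_{\sAV(U)}(M) = n$.

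The key local step is to show that $M$ has finite length as an $\sAV(U)$-module. Here I would argue by additivity of GK-dimension, mirroring the classical proof of finite length for holonomic $\cD$-modules \cite[Proposition 3.1.2]{HTT08}. Concretely, for any $\sAV(U)$-submodule $N \subset M$ one has, from the short exact sequence $0 \to N \to M \to M/N \to 0$, that $\rmd(N)$ and $\rmd(M/N)$ are both at most $n$, and by Lemma~\ref{lemma:bersteininequality} each nonzero subquotient has GK-dimension exactly $n$. The multiplicity in top dimension (the leading coefficient of the Hilbert–Samuel type growth function, normalized so that the ambient $\cD$-part contributes a positive integer via Lemma~\ref{lemma:tpwholimplieswfindim}) is additive on short exact sequences and takes values in $\Z_{>0}$ on nonzero modules. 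Therefore any strictly increasing chain of submodules of $M$ has length bounded by this multiplicity of $M$, so $M$ has finite length.

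To globalize, suppose $0 = \cM_0 \subsetneq \cM_1 \subsetneq \cM_2 \subsetneq \cdots$ is a strictly ascending chain of $\sAV$-subsheaves of $\cM$. Restricting to each chart $U_\alpha$ gives ascending chains of $\sAV(U_\alpha)$-submodules of the finite-length module $\cM(U_\alpha)$, each of which stabilizes. Since the cover is finite and a subsheaf $\cN \subsetneq \cN'$ must differ on at least one chart, the combined length of $\cM$ is bounded by $\sum_\alpha \mathrm{length}_{\sAV(U_\alpha)} \cM(U_\alpha)$, which is finite. Hence $\cM$ itself has finite length.

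The main obstacle I anticipate is establishing the additivity of the top-dimensional multiplicity in the $\sAV$ setting, since $\sAV(U) \cong \aD \otimes_A U_A(\aLA_+)$ is not itself of the classical almost-commutative form and the filtration arguments behind additive multiplicity must be transported through this tensor decomposition. The cleanest route is probably to use Lemma~\ref{lemma:tpwholimplieswfindim}, which identifies $\GKdim_{\sAV(U)} T(P,W) = \GKdim_{\cD}(P)$ for finite-dimensional $W$, to reduce the multiplicity computation to the well-understood $\cD$-module multiplicity, together with Proposition~\ref{proposition:containstensorproduct} to guarantee that every nonzero submodule contains a $T(P,W)$ piece contributing positively. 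A secondary technical point is checking that taking sections over affine charts is exact enough that strict inclusions of subsheaves are detected chart-wise, which follows from quasi-coherence.
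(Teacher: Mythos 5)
Your proposal is correct in outline, but it takes a genuinely different route from the paper. The paper does not develop any multiplicity theory for $\sAV(U)$: instead it inducts on the length $l$ of the holonomic $\cD$-module $\aD M_0$ generated by a finite-dimensional generating subspace $M_0$ (finite by \cite[Proposition 3.1.2]{HTT08}). It peels off a simple $\aD$-submodule $S \subset \aD M_0$, uses the Jacobson Density Theorem (this is where the uncountability of $\bk$ enters, via Schur's Lemma) to show $U(\aL_+)S \cong T(S,W)$ with $\dim W < \infty$ by Lemma~\ref{lemma:tpwholimplieswfindim}, so $M' = U(L^N)S$ has finite length, and then applies the induction hypothesis to $M/M'$, whose associated $\cD$-module has length $< l$. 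Your approach instead transports Bernstein's multiplicity argument to $R = \aD \otimes U(L^{N-2})$; this is viable because $\rmgr R \cong \mathrm{Sym}_A \Theta \otimes \mathrm{Sym}(L^{N-2})$ is commutative and Noetherian, so good filtrations exist, submodules are finitely generated, GK-dimension is monotone on subquotients (combined with Lemma~\ref{lemma:bersteininequality} every nonzero subquotient has GK-dimension exactly $n$), and the characteristic cycle in dimension $n$ is additive with positive integer multiplicities, exactly as in the classical $\cD$-module proof. What each buys: your route avoids the Jacobson density/uncountability input and yields a quantitative length bound by the top-dimensional multiplicity, at the cost of setting up the characteristic-cycle machinery for $R$; the paper's route avoids that machinery entirely by exploiting the tensor decomposition $\sAV(U) \cong \aD \otimes_A U_A(\aLA_+)$.

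One caveat in your sketch: your proposed ``cleanest route'' to additivity via Lemma~\ref{lemma:tpwholimplieswfindim} and Proposition~\ref{proposition:containstensorproduct} does not quite do the job, since the lemma computes only the GK-dimension of $T(P,W)$ (not a multiplicity), and the proposition controls only a simple submodule at the bottom of $M$, not arbitrary subquotients along a chain. The correct justification is the direct one just described: fix the order-PBW filtration on $R$, give each term of a chain the induced good filtration, and use additivity of multiplicities along the $n$-dimensional components of the characteristic variety, verbatim as in \cite[Proposition 3.1.2]{HTT08}. With that substitution, and your (correct) observation that a strict inclusion of quasi-coherent subsheaves is detected on some chart of a finite \'etale cover, your proof goes through.
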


\begin{proof}
	It is enough to prove that $M=\cM(U)$ has a finite composition series for each étale chart $U \subset X$. By hypothesis, $\cM$ is a differentiable $\sAV$-module. Hence, by Theorem \ref{theorem:localisotheorem} and Proposition \ref{lemma:ifxetalejustcheckunipar}, $M$ is a module over the algebra $R = \aD \otimes U(L^N)$ for some $N\geq 0$. Suppose $M_0\subset M$ is a finite-dimensional generating subspace of the $R$-module $M$, then $\aD M_0$ is a holonomic $\aD$-module. By \cite[Proposition 3.1.2]{HTT08}, $\aD M_0$ has finite length $l$. We will prove by induction on $l$ that $M$ has finite length as an $R$-module. The basis of induction is $l=0$, in which case there is nothing to prove. Suppose $l>0$. Then, $DM_0$ contains a simple $\aD$-submodule $S$. Let $v \in S$ be a generator of the $\aD$-module $S$ and let $W = U(L^N) v$. Then, using the same argument as in the previous proposition, we get $U(\aL_+)S \cong T(S, W)$ since $S$ is a simple $\aD$-module and the Jacobson Density Theorem holds for it. By Lemma \ref{lemma:tpwholimplieswfindim}, $W$ has finite dimension, hence $M' = U(L^N)S$ has a finite composition series. The $R$-module $M /M'$ is holonomic and generated by the image of $M_0$. Because the image of $\aD$-module $DM_0$ in $M/M'$ has length strictly less than $l$, we conclude by the induction hypothesis that both $M/M'$ and $M'$ have finite composition series. Therefore, $M$ has a finite composition series as well.
\end{proof}

\begin{corollary}
Suppose that $X$ is étale and $M$ is a differentiable holonomic $A\cV$-module on $X$. If
\[
0 = M_0 \subset M_1 \subset \cdots \subset M_l = M
\]
is a composition series of $M$, then there exist simple finite-dimensional $\gl_n$-modules \break 
$W_1,\dots,W_l$ and simple holonomic $\cD(X)$-modules $P_1,\dots,P_l$ such that
\[
M_i / M_{i-1} \cong T(P_i,W_i)
\]
for each $i=1,\dots,l$.
\end{corollary}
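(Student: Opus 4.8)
The plan is to reduce the statement to Proposition~\ref{proposition:containstensorproduct} and Lemma~\ref{lemma:tpwholimplieswfindim}, applied to each composition factor, followed by one representation-theoretic observation that forces the $L^{N-2}$-module produced by the proposition to descend to $\gl_n$. Fix $N$ with $M$ being $N$-differentiable, so that $M$ and all of its subquotients are modules over $R = \aD \otimes U(L^{N-2})$ and are annihilated by $\aL_{\geq N-1}$. First I would check that each composition factor $\cM_i = M_i/M_{i-1}$ is a simple $R$-module that is again holonomic and $N$-differentiable: differentiability is inherited because $\aL_{\geq N-1}$ kills every subquotient of $M$; holonomicity follows since $\GKdim$ is monotone under subquotients, giving $\rmd(\cM_i)\leq n$, while Lemma~\ref{lemma:bersteininequality} gives $\rmd(\cM_i)\geq n$ as $\cM_i\neq 0$. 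Being simple, $\cM_i$ is cyclic, hence finitely generated, so all hypotheses of Proposition~\ref{proposition:containstensorproduct} hold.

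Applying that proposition, $\cM_i$ contains a simple submodule isomorphic to $T(P_i,W_i)$ with $P_i$ a simple $\aD$-module and $W_i$ a simple finite-dimensional $L^{N-2}$-module; since $\cM_i$ is simple, this submodule is all of $\cM_i$, so $\cM_i\cong T(P_i,W_i)$. Holonomicity of $P_i$ is then immediate from Lemma~\ref{lemma:tpwholimplieswfindim}, which gives $\GKdim_{\cD}(P_i)=\GKdim_{\sAV(X)}T(P_i,W_i)=\rmd(\cM_i)=n$.

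The remaining and genuinely new step is to show that $W_i$ is in fact a $\gl_n$-module, i.e. that the image $\overline{\mathfrak n}$ of $\aL_{\geq 1}$ in $L^{N-2}$ acts by zero on $W_i$; this is where I expect the real work to lie. The ideal $\overline{\mathfrak n}$ is nilpotent, since $[\aL_{\geq 1},\aL_{\geq 1}]\subseteq \aL_{\geq 2}$ and $\aL_{\geq N-1}=0$ in the quotient. Hence, by Lie's theorem (the base field is algebraically closed of characteristic zero), the solvable ideal $\overline{\mathfrak n}$ acts on the finite-dimensional simple module $W_i$ through a character $\chi\colon \overline{\mathfrak n}\to\bk$ that annihilates $[L^{N-2},\overline{\mathfrak n}]$. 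On the other hand, the Euler field $E=\sum_i X_i\xpartial{X_i}\in L_0$ acts on the graded piece $L_k$ by the scalar $k$, so $L_k=[E,L_k]$ for every $k\geq 1$, whence $\overline{\mathfrak n}=[L_0,\overline{\mathfrak n}]\subseteq[L^{N-2},\overline{\mathfrak n}]$. Therefore $\chi$ vanishes on $\overline{\mathfrak n}$, the action of $\overline{\mathfrak n}$ is trivial, and $W_i$ factors through $L^{N-2}/\overline{\mathfrak n}\cong L_0\cong\gl_n$. As the submodule lattices of $W_i$ over $L^{N-2}$ and over $L_0$ coincide once $\overline{\mathfrak n}$ acts by zero, $W_i$ is a simple finite-dimensional $\gl_n$-module.

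Carrying this out for each $i=1,\dots,l$ yields the families $P_1,\dots,P_l$ and $W_1,\dots,W_l$ with $M_i/M_{i-1}\cong T(P_i,W_i)$, as required. The only delicate point is the last paragraph: Proposition~\ref{proposition:containstensorproduct} produces an a priori genuine $L^{N-2}$-module, and the grading-element identity $\overline{\mathfrak n}=[L_0,\overline{\mathfrak n}]$ is precisely what forces the character of the nilpotent part to vanish; the inheritance of holonomicity and differentiability by subquotients is routine by comparison.
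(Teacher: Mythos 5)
Your proposal is correct and takes essentially the same route the paper intends: the corollary is stated without a separate proof because it follows at once from the finite-length proposition together with Proposition~\ref{proposition:containstensorproduct} — each composition factor is a simple, holonomic, $N$-differentiable module, hence equal to its simple submodule $T(P_i,W_i)$, with holonomicity of $P_i$ read off from Lemma~\ref{lemma:tpwholimplieswfindim} exactly as you do. Your final paragraph usefully makes explicit a step the paper leaves tacit, namely that the simple finite-dimensional $L^{N-2}$-module produced by Proposition~\ref{proposition:containstensorproduct} automatically descends to $\gl_n$: your argument — the ideal $\aL_{\geq 1}/\aL_{\geq N-1}$ is nilpotent, acts on a simple module by a character vanishing on $[L^{N-2},\aL_{\geq 1}/\aL_{\geq N-1}]$, and the Euler-element identity $[E,L_k]=L_k$ for $k\geq 1$ then forces that character to be zero — is sound and is precisely why the paper can pass from $L^{N-2}$-modules to $\gl_n$-modules here and in Proposition~\ref{proposition:sheafofavmodulearetensormod}.
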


\begin{conjecture}
Every holonomic $\sAV$-module is differentiable.
\end{conjecture}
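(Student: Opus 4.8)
The plan is to establish the local statement that a finitely generated $\sAV(U)$-module $M$ on an étale chart $U$ with $\GKdim_{\sAV(U)}(M)=n$ satisfies $\aL_{\geq N-1}M=0$ for some $N$; by Proposition~\ref{lemma:ifxetalejustcheckunipar} this is differentiability on $U$, and since $X$ is quasi-projective it admits a finite cover by étale charts, so taking the maximum of the local orders $N$ yields global differentiability. A preliminary reduction is the observation that a \emph{finite-dimensional} $\aL_+$-module $W$ is automatically differentiable: the Euler operator $E=\sum_{i=1}^n X_i\xpartial{X_i}\in L_0\cong\gln$ is the grading element, so $[E,y]=ky$ for $y\in L_k$, and on a finite-dimensional module its generalized eigenvalues form a finite set $\{\lambda_1,\dots,\lambda_r\}$ while $L_k$ maps the $\lambda$-eigenspace into the $(\lambda+k)$-eigenspace; hence $L_k$ acts as zero once $k>\max_{i,j}(\lambda_j-\lambda_i)$, and $\aL_{\geq N-1}W=0$ for $N$ large. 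The whole difficulty is therefore concentrated in showing that holonomicity forces the vertical fibers to be finite-dimensional.

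For the local statement I would argue by induction on the length $\ell$ of the holonomic $\aD$-module $\aD M_0$, where $M_0$ is a finite-dimensional generating subspace (this length is finite by \cite[Proposition 3.1.2]{HTT08}, and $\aD M_0$ is holonomic since $\GKdim_{\aD}(\aD M_0)\leq\GKdim_{\sAV(U)}(M)=n$ together with Lemma~\ref{lemma:bersteininequality}). If $\ell=0$ then $M=0$. If $\ell>0$, choose a simple $\aD$-submodule $P\subseteq\aD M_0$ and run the argument of Proposition~\ref{proposition:containstensorproduct} \emph{with the full Lie algebra $\mathrm L_+$ in place of its truncation}: the map $\Psi\colon T(P,U(\mathrm L_+))\to M$, $v\otimes u\mapsto uv$, is a homomorphism of $\sAV(U)$-modules, and the Jacobson Density Theorem gives $\im\Psi\cong T(P,W')$ with $W'=U(\mathrm L_+)/\sum_{w\in\ker\Psi}I_w$ a cyclic $U(\mathrm L_+)$-module. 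On the genuine tensor module $T(P,W')=P\otimes W'$ the operators of $\aD$ act as $(\cdot)\otimes 1$ and those of $\mathrm L_+$ as $1\otimes(\cdot)$, so with a generator $p_0\otimes w_0$ and a frame $F\supseteq F_{\aD}\cup F_{\mathrm L}$ one has $\dim F^{2l}(p_0\otimes w_0)\geq\dim(\aD_{\leq l}p_0)\cdot\dim(U(\mathrm L_+)_{\leq l}w_0)$. Because the holonomic $\aD$-factor has exact polynomial growth of degree $n$, this yields the super-additive bound
\[
\GKdim_{\sAV(U)}T(P,W')\;\geq\;\GKdim_{\aD}(P)+\GKdim_{U(\mathrm L_+)}(W')\;=\;n+\GKdim_{U(\mathrm L_+)}(W').
\]
Since $\im\Psi\subseteq M$ forces the left-hand side to be $\leq n$, we get $\GKdim_{U(\mathrm L_+)}(W')=0$, whence $\dim_{\bk}W'<\infty$ as $W'$ is cyclic over the finitely generated algebra $U(\mathrm L_+)$. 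Thus $\cN:=\im\Psi\cong T(P,W')$ is a finite-fiber, hence differentiable, submodule of $M$ containing $P$. The quotient $M/\cN$ is generated by the image of $M_0$, is holonomic or zero by Lemma~\ref{lemma:bersteininequality}, and the image of $\aD M_0$ in it has length at most $\ell-1$; by the induction hypothesis $M/\cN$ is differentiable, and then Lemma~\ref{lemma:misdiffifquoandsubarediff} shows that $M$ is differentiable.

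The step I expect to be the genuine obstacle is the finite-dimensionality of the vertical fiber $W'$ carried out \emph{without} assuming differentiability. The existing tools, Lemmas~\ref{lemma:tensorholonomicgkdimlpluszero} and~\ref{lemma:tpwholimplieswfindim}, presuppose that $W$ is a module over a truncation $\aL_+/\aL_{\geq N-1}$, so invoking them here would be circular; the proposal above instead extracts $\dim W'<\infty$ from a super-additivity estimate for the Gelfand-Kirillov dimension of a tensor module, which is delicate because super-additivity of $\GKdim$ can fail in general and is justified here only through the exact (Hilbert-polynomial) growth of the holonomic $\aD$-factor and the clean splitting of the horizontal and vertical actions on $T(P,W')$. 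Two technical points must also be secured: that $\mathrm L_+$ is finitely generated as a Lie algebra, so that $\GKdim_{U(\mathrm L_+)}(W')=0$ really forces $\dim W'<\infty$, which I would verify degree by degree; and that the pure vertical action on $T(P,W')$ factors through the $\bk$-form $\mathrm L_+$ rather than through the $A$-linear completion, so that the reduction to a $\bk$-finite fiber is legitimate.
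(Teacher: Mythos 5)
First, note that the paper itself offers no proof of this statement: it is stated as an open conjecture, so there is nothing to compare your argument against, and it must be judged on its own merits. On those merits there is a genuine gap, and it sits exactly where the conjecture's difficulty lives. Your entire argument takes place inside the polynomial subalgebra $\aD \otimes U(L_+)$ of $\sAV(U)$, whereas a holonomic $\sAV(U)$-module is by definition a module over the completed algebra $\aD \otimes_A U_A(\aLA_+)$, whose elements include genuine infinite series $\sum_{k} f_k X^k \xpartial{X_i} \in \aLA_+$. Differentiability is precisely the assertion that such series act through a truncation; in its absence, no continuity is built into the notion of a module, so the action of a series element on a vector is \emph{not} determined by the actions of its homogeneous components. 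Consequently, even granting your Euler-grading argument that $\dim W' < \infty$ forces $L_{\geq N} W' = 0$ (which is correct), you cannot conclude that $\aLA_{\geq N}$, let alone $\mathcal{I}_N(U)$, annihilates $\cN = U(L_+)P$: every graded term of a series in $\aLA_{\geq N}$ kills $\cN$, but the series itself need not. For the same reason $\cN$ is only a submodule over $\aD \otimes U(L_+)$, not over $\sAV(U)$; hence $M/\cN$ is not known to be an $\sAV(U)$-module, the inductive step does not go through, and Lemma~\ref{lemma:misdiffifquoandsubarediff} (whose hypotheses require sub and quotient to be differentiable $\sAV$-modules) cannot be invoked. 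Relatedly, $T(P,U(L_+))$ is not a well-defined $\sAV(U)$-module at all, since the tensor-module action formula becomes an infinite sum on it; $\Psi$ can be recast as a morphism of $\aD \otimes U(L_+)$-modules (using that $\aD$ and the constant-coefficient copy of $U(L_+)$ commute in $\sAV(U)$), but this only reinforces that your conclusion is about the polynomial part of the action. Your closing paragraph treats the passage from the $\bk$-form $L_+$ to the completion as a technical point to be "secured''; it is in fact the entire content of the conjecture, and nothing in the proposal addresses how holonomicity constrains the series part of the action.

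There are also secondary gaps in the growth estimate, which you partly flag. The superadditivity bound requires $\liminf_l \log_l \dim (F_{\aD}^l p_0) = n$, i.e., genuinely polynomial (not merely limsup-polynomial) growth of a holonomic module with respect to a \emph{frame} filtration on $\cD(U)$ for a general étale chart $U$. For the Weyl algebra with the Bernstein filtration this is classical, and is essentially the content of \cite[Lemma 2.6]{BR24} invoked in Lemma~\ref{lemma:tensorholonomicgkdimlpluszero} — note that the truncation hypothesis there is used to know the tensor module is defined over a nice subalgebra, so your circularity worry about that lemma is less severe than you suggest; the real issue is that on a general chart the frame filtration of $\cD(U)$ need not have finite-dimensional pieces with a commutative finitely generated associated graded without additional work, so "exact Hilbert-polynomial growth'' is an assertion requiring proof, not a citation. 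The steps you do have are sound: $\GKdim_{U(L_+)}(W')=0$ together with cyclicity over the finitely generated algebra $U(L_+)$ does force $\dim W' < \infty$ (the chain $F^l w_0$ either stabilizes, giving finite dimension, or grows at least linearly), and the reduction of global differentiability to uniform local bounds over a finite étale cover is fine. But in sum, even if the growth estimate is repaired, the proposal proves at best that $M$ has the expected structure as a module over $\aD \otimes U(L_+)$; it does not prove that $M$ is differentiable, and the conjecture remains open.
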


\section{$\sAV$-modules that are coherent $\cO$-modules}\label{section:coherentavmod}

In this section, we study $\sAV$-modules that are coherent as $\cO$-modules. We will also further investigate the sheaves of tensor modules introduced in~\cite{BR24}.
\begin{proposition}
An $\sAV$-module that is coherent over $\cO$ is holonomic.
\end{proposition}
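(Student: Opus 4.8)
The plan is to argue locally and reduce the statement to a Gelfand--Kirillov dimension bound. Fix a finite étale cover $\{U_\alpha\}$ of $X$, so that $\rmd(\cM) = \max_\alpha \GKdim_{\sAV(U_\alpha)}(\cM(U_\alpha))$, and write $A = \cO(U_\alpha)$, $M = \cM(U_\alpha)$. Coherence over $\cO$ means precisely that $M$ is finitely generated over $A$; since $A \subset \sAV(U_\alpha)$, the same generators generate $M$ over $\sAV(U_\alpha)$, so $\cM$ is locally finitely generated. By Lemma~\ref{lemma:bersteininequality} we already have $\rmd(\cM) \ge n$ for $\cM$ nonzero, so it remains to establish the upper bound $\GKdim_{\sAV(U_\alpha)}(M) \le n$ on each chart.

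First I would show that $M$ is differentiable. Using the isomorphism $\sAV(U_\alpha) \cong \aD \otimes_A U_A(\aLA_+)$ of Theorem~\ref{theorem:localisotheorem}, observe that $A$ is central in $U_A(\aLA_+)$, as $\aLA_+$ consists of $A$-linear derivations; hence each $1 \otimes X^k\xpartial{X_i}$ commutes with $A$ and acts $A$-linearly on $M$. On the other hand, $M$ is a genuine module over the \emph{completed} algebra $\sAV(U_\alpha)$ while being a discrete module (finitely generated over $A$), so the action of a vector field $f\xpartial{x_i}$, whose image under $\varphi$ is the sum $f\xpartial{x_i}\otimes 1 + \sum_{k\neq 0} \tfrac{1}{k!}\tfrac{\partial^k f}{\partial x^k}\otimes X^k\xpartial{X_i}$, must land in $M$; this forces $X^k\xpartial{X_i} g = 0$ for $|k|$ large, for each of the finitely many generators $g$. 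By the $A$-linearity noted above, this annihilation propagates to all of $M = \sum A g$, yielding a uniform bound $\aLA_{\geq N-1} M = 0$ for some $N$. By Proposition~\ref{lemma:ifxetalejustcheckunipar}, $M$ is $N$-differentiable, so the $\sAV(U_\alpha)$-action factors through the quotient $R = \aD \otimes_A U_A(\aLA_+/\aLA_{\geq N-1})$, and consequently $\GKdim_{\sAV(U_\alpha)}(M) = \GKdim_R(M)$.

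Finally I would bound $\GKdim_R(M)$ by a good-filtration argument. Equip $\aD$ with its order filtration and $U_A(\aLA_+/\aLA_{\geq N-1})$ with the PBW filtration, so that $\rmgr R \cong A[\xi_1,\dots,\xi_n]\otimes_\bk \mathrm{Sym}(L^{N-2})$; since $U_\alpha$ is smooth and irreducible, $A$ is a finitely generated integral domain, hence $\rmgr R$ is a finitely generated commutative algebra without nilpotents. Give $M$ the constant filtration $M_i = M$ for $i \ge 0$, which is a good filtration because $\rmgr M = M$ is finitely generated over $\rmgr_0 R = A$. Then $\rmgr M$ is concentrated in degree zero, the positive part of $\rmgr R$ acts trivially, and $\GKdim_{\rmgr R}(\rmgr M) = \GKdim_A(M) = \GKdim(A/\Ann_A M) \le \GKdim(A) = n$ by Lemma~\ref{lemma:commalgebrarelation}. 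Applying \cite[Corollary 1.1]{MS89} gives $\GKdim_R(M) = \GKdim_{\rmgr R}(\rmgr M) \le n$. Combined with the lower bound this forces $\GKdim_{\sAV(U_\alpha)}(M) = n$ whenever $M \neq 0$, so $\rmd(\cM) = n$ and $\cM$ is holonomic. I expect the main obstacle to be the differentiability step, namely converting the pointwise annihilation forced by coherence into the uniform bound $\aLA_{\geq N-1}M = 0$, where the centrality of $A$ in $U_A(\aLA_+)$ is essential; a secondary point is verifying that the constant filtration on $M$ is genuinely good so that \cite[Corollary 1.1]{MS89} applies.
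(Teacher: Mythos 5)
The skeleton of your argument (reduce to étale charts, prove differentiability, then bound the GK-dimension above by $n$) matches the paper, but your differentiability step has a genuine gap. You claim that since $\varphi\bigl(f\xpartial{x_i}\bigr)$ is an infinite series $f\xpartial{x_i}\otimes 1 + \sum_{k\neq 0}\frac{1}{k!}\frac{\partial^k f}{\partial x^k}\otimes X^k\xpartial{X_i}$ and its action ``must land in $M$,'' the terms $X^k\xpartial{X_i}\,g$ must vanish for $|k|$ large. This presupposes that the action of the completed algebra $\sAV(U_\alpha)$ on $M$ is continuous for the discrete topology on $M$ (equivalently, that every element of $M$ has an open annihilator, i.e.\ is killed by some $\mathcal{I}_l$). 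Nothing in the definition of an $\sAV$-module grants this: $M$ is only required to be a module over the abstract algebra $\sAV(U_\alpha)$, and an infinite series in the completion is a single algebra element whose action need not be a limit of the actions of its partial sums, so there is no convergence to exploit. This is exactly the point where the paper inserts a nontrivial external theorem: by \cite{BR23}, an $A\cV$-module (i.e.\ a module over the uncompleted smash product $A\#U(\cV)$, which embeds in $\sAV(U_\alpha)$) that is finitely generated over $A$ is automatically $N$-differentiable, with $N$ depending on the rank. That is a substantive result, not a formality of the topology; indeed, if differentiability followed formally from discreteness as you suggest, the differentiability questions raised later in the paper would have a very different status. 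Your propagation step is fine — $A$ does commute with $1\otimes U_A(\aLA_+)$ inside $\aD\otimes_A U_A(\aLA_+)$, so annihilation by $X^k\xpartial{X_i}$ passes from generators to all of $M$ — but the pointwise annihilation you propagate is precisely what remains unproved.

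The rest of your argument is correct and takes a mildly different route from the paper for the upper bound. The paper argues directly with frames: choosing a frame $F_A$ of $\cO(U)$ and the finite-dimensional space $F_B$ spanned by $\partial_{x_i}$ and the $X^k\xpartial{X_i}$ with $|k|<N$, it observes $F_B M_0\subset F_A^{s}M_0$ for some $s$, hence $F^kM_0\subset F_A^{ks}M_0$ and $\GKdim_{\sAV(U)}\cM(U)\leq \GKdim\,\cO(U)=n$. You instead invoke \cite{MS89} with the constant filtration $M_i=M$; this does work — $\rmgr R$ is a polynomial algebra over the domain $A$, hence reduced, the constant filtration is good because $\rmgr M=M$ is finitely generated over the degree-zero part $A$, the positive-degree symbols annihilate $\rmgr M$, and Lemma~\ref{lemma:commalgebrarelation} then gives the bound $\GKdim_{\rmgr R}(\rmgr M)\leq\GKdim(A)=n$ — but it is heavier machinery for the same elementary estimate. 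Your reduction to a finite étale cover, the lower bound via Lemma~\ref{lemma:bersteininequality}, and the observation that $\cO$-coherence gives local finite generation over $\sAV$ are all fine. To repair the proof, replace the convergence argument with the citation of \cite{BR23} (or prove a continuity property of the $\sAV$-action on quasi-coherent modules, which you would have to formulate and establish separately).
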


\begin{proof}
	Let $U \subset X$ be an étale chart with local parameters $x_1,\dots,x_n$. Then, $\cM(U)$ is an $A\cV$-module on $U$. By~\cite{BR23}, $\cM(U)$ is an $N$-differentiable $A\cV$-module on $U$, where $N$ depends on the rank of $\cM(U)$ as an $\cO(U)$-module. By Proposition \ref{lemma:ifxetalejustcheckunipar}, $\cM(U)$ is a module over $\cD(U) \otimes U(L^{N-2})$. Suppose that $F_A$ is a frame of $\cO(U)$ and $F_B$ is the subspace spanned by the partial derivatives with respect $x_1,\dots,x_n$ and 
	\[
	X^k \xpartial{X_i}, \quad i=1,\dots, n, \ k\in\Z_+^n \text{ with } |k|  < N.
	\]
	Then, $F = F_A + F_B$ is a frame for $\cD(U) \otimes U(L^{N-2})$. If $M_0$ is a finite-dimensional generating subspace of $M$ as an $\cO(U)$-module, then there exists $s\geq 1$ such that $F_BM_0 \subset F_A^sM_0$. Hence, for each $k \geq 0$,
	$
	F^k M_0 \subset F^{ks}_AM_0.
	$
	Thus,
	\[
	n \leq \GKdim_{\sAV(U)} \cM(U) = \overline{\lim_k} \log_k \dim F^kM_0 \leq  \overline{\lim_k} \log_k \dim F_A^{ks}M_0 \leq \GKdim\, \cO(U)=n.
	\]
\end{proof}
\begin{corollary}
An $\sAV$-module that is coherent over $\cO$ is a locally free $\cO$-module, i.e., a vector bundle on $X$.
\end{corollary}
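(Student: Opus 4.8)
The plan is to reduce to an étale chart and observe that the $\sAV$-structure conceals a genuine integrable connection, after which local freeness follows from the classical theory of $\cD$-modules.

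First I would fix an étale chart $U \subset X$ with uniformizing parameters $x_1,\dots,x_n$, and write $A = \cO(U)$, $M = \cM(U)$. Since $\cM$ is coherent over $\cO$, the module $M$ is finitely generated over $A$. By Theorem~\ref{theorem:localisotheorem} the algebra of differential operators $\aD = \cD(U)$ embeds as a subalgebra of $\sAV(U) \cong \aD \otimes_A U_A(\aLA_+)$ via $d \mapsto d \otimes 1$; this is precisely the inclusion already exploited in Lemma~\ref{lemma:bersteininequality} and Lemma~\ref{lemma:tpwholimplieswfindim}. Restricting the $\sAV(U)$-action along this inclusion equips $M$ with the structure of a module over $\aD$, and this $\aD$-module is still coherent over $A$, since it is the same underlying $\cO$-module.

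The key point is that this $\aD$-module structure is an \emph{integrable connection}. Under the isomorphism $\varphi$ of Theorem~\ref{theorem:localisotheorem} one has $\varphi(1 \# \partial_{x_i}) = \partial_{x_i} \otimes 1$, because every higher Taylor coefficient of the constant function $1$ vanishes; hence the operators $\nabla_i := \rho(\partial_{x_i})$ are exactly the action of the distinguished generators $\partial_{x_i}$ of $\aD$, \emph{without} the order $N-1$ corrections that appear in $\rho(f\partial_{x_i})$ for general $f$. Since $[\partial_{x_i},f] = \partial_{x_i}(f)$ and $[\partial_{x_i},\partial_{x_j}]=0$ hold inside $\aD$, the $\nabla_i$ obey the Leibniz rule $\nabla_i(fm) = \partial_{x_i}(f)\,m + f\,\nabla_i(m)$ and commute with one another, so they define a flat connection on the coherent $\cO$-module $M$.

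At this stage I would invoke the classical fact that a $\cD$-module which is coherent over $\cO$ is automatically locally free of finite rank, i.e., an integrable connection (see \cite[Theorem 1.4.10]{HTT08}); applied to $M$ over $\aD$ this shows that $\cM|_U$ is locally free over $\cO$. As $X$ is covered by finitely many étale charts, $\cM$ is locally free, hence a vector bundle. The step requiring the most care is the verification that restriction along $\aD \hookrightarrow \sAV(U)$ yields the honest connection rather than the full differential action $\rho$; the resolution is exactly that it is the coordinate fields $\partial_{x_i}$, and not general $f\partial_{x_i}$, whose action is free of higher-order terms, and these coordinate fields already generate $\aD$ over $A$ and hence trivialize the module. (Alternatively, one may argue through the finite composition series of the preceding corollary: each factor $T(P_i,W_i) \cong P_i^{\oplus \dim W_i}$ is $\cO$-coherent as a $\cD$-module, hence locally free by the same classical result, and a finite iterated extension of vector bundles by vector bundles is again a vector bundle.)
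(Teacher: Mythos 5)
Your argument is correct, and it takes a genuinely different route from the paper's. The paper first uses the preceding proposition to conclude that $\cM$ is differentiable and hence a sheaf of $A\cV$-modules, then cites \cite[Proposition 1.2]{BR23} --- a finitely generated $A\cV$-module over the coordinate ring is projective --- and passes from projectivity to local freeness via freeness of stalks, concluding with \cite{Ser55}. You instead work only in étale charts, restrict the action along the subalgebra $\aD \otimes 1 \subset \aD \otimes_A U_A(\aLA_+) \cong \sAV(U)$, and observe --- correctly, since $\varphi(1 \# \partial_{x_i}) = \partial_{x_i} \otimes 1$ has no higher Taylor corrections and the smash-product relation $[1\#\eta, f\#1] = \eta(f)\#1$ yields the Leibniz rule --- that $M = \cM(U)$ carries an honest $\cD(U)$-module structure with the same underlying $\cO$-module; the classical theorem that an $\cO$-coherent $\cD$-module is an integrable connection then finishes. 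What your approach buys: it bypasses both the differentiability step and the projectivity result of \cite{BR23} entirely, making the corollary self-contained modulo classical $\cD$-module theory (indeed, it essentially re-derives the relevant ingredient of \cite{BR23} from \cite[Theorem 1.4.10]{HTT08}). The trade-off is that the embedding of $\aD$ exists only in étale charts, where Theorem~\ref{theorem:localisotheorem} applies, whereas the paper's argument runs on arbitrary affine opens --- harmless here, since étale charts cover $X$ and local freeness is a stalk-local property. Two small caveats: \cite[Theorem 1.4.10]{HTT08} is stated over $\C$, but its proof (Nakayama plus reduction of the order of vanishing of a relation using the connection) works over any field of characteristic zero, so it applies in the paper's setting; and your parenthetical alternative via the composition series is also valid, but note that it does require the differentiability and holonomicity supplied by the preceding proposition, which your main argument cleanly avoids.
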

\begin{proof}
Let $\cM$ be an $\sAV$-module that is coherent over $\cO$. By the above proposition, $\cM$ is differentiable and a sheaf of $A\cV$-modules. Suppose that $U \subset X$ is an affine open set such that $\cM|_U = \widetilde{M}$ for some $A\cV$-module $M$ on $U$ that is finitely generated as an $\cO(U)$-module. By~\cite[Proposition 1.2]{BR23}, $\cM(U)$ is a projective $\cO(U)$-module. In particular, $\cM_{p}$ is a free $\cO_{X,p}$-module for each $p \in U$. We conclude that $\cM$ is a locally free $\cO$-module, i.e., $\cM$ is a vector bundle \cite{Ser55}.
\end{proof}

Therefore, similar to what happens in $\cD$-modules, an $\sAV$-module is coherent over $\cO$ if and only if it is locally free over $\cO$ of finite rank. In particular, if $X$ is affine, then global sections of an $\sAV$-module that is coherent over $\cO$ form a projective $\cO(X)$-module.

In~\cite{BR24}, the authors constructed a sheaf $\mathcal{J}^W$ such that $\mathcal{J}^W(U) \cong T(\cO(U),W)$, where $W$ is a finite-dimensional representation of the Lie algebra $\aL_+$ for which the action of $\gl_n$ integrates to a rational $\GL_n$-module. The $\sAV$-module $\mathcal{J}^W$ is called a \emph{tensor module}. We will recall this construction, assuming that $W$ is a representation of the Lie algebra $L^0 \cong \gl_n$.

Denote by $\h \subset \gl_n$ the standard Cartan subalgebra of $\gl_n$ of diagonal matrices. Let $\rho:\gl_n \rightarrow \gl(W)$ be a simple finite-dimensional $\gl_n$-module with highest weight $\mu \in \h^{\ast}$ such that $\mu(E_{ii}) \in \Z$ and $\mu(E_{ii}) \geq \mu(E_{i+1,i+1})$, i.e., $\mu$ is integral dominant. Then $\rho$ integrates to a rational representation $\widetilde{\rho}: \GL_n \rightarrow \GL(W)$ of the Lie group $\GL_n$, and $W$ is called \emph{integrable}.

Let us construct the tensor module $\mathcal{J}^W$ associated with $W$. For each étale chart $U \subset X$, define $\mathcal{J}^W(U) = \cO(U) \otimes W$ as a vector space. The action of $\cO(U)$ on $\mathcal{J}^W(U)$ is given by left multiplication and the action of $\Theta(U)$ is given by
\[
\left ( f \xpartial{x_i}\right )(g \otimes w) = \fxpartial{g}{x_i}\otimes w + g \sum_{j=1}^n \fxpartial{f}{x_j} \otimes E_{ji}w,
\]
where $x_1,\dots,x_n$ are the uniformizing parameters of $U$ and $f,g \in \cO(U)$. If $U_1$ and $U_2$ are étale charts with uniformizing parameters $x_1,\dots,x_n$ and $y_1,\dots,y_n$, then the Jacobian matrix $\varphi_{U_1U_2}$ with entries 
$\, \partial x_i / \partial y_j \ i,j=1,\dots,n,$
is an element of $\GL_n(\cO(U_1 \cap U_2))$. Hence, $\widetilde{\rho}(\varphi_{U_1U_2})$ is a well-defined $\cO(U_1\cap U_2)$-linear automorphism of $\cO(U_1 \cap U_2)\otimes W$ by~\cite[Lemma 5]{BR24}. To glue $\cJ^W(U_1)$ and $\cJ^W(U_2)$, we use the map
\[
 gw \mapsto g \widetilde{\rho}(\varphi_{U_1U_2})w, \quad g \in \cO(U_1 \cap U_2), \ w \in W.
\]
By~\cite[Theorem 17]{BR24}, $\cJ^W$ is a $2$-differentiable $\sAV$-module and $\mathcal{J}^W(U) \cong T(\cO(U),W)$ for every étale chart $U \subset X$. Furthermore, $\cJ^W$ is coherent over $\cO$ and a simple $\sAV$-module because $W$ is a simple finite-dimensional $\gl_n$-module, see \cite{BNZ21}.

\begin{example} \label{klambda}
Let $\{U_{\alpha} \}$ be a finite étale cover of $X$ such that $x^{\alpha}_{1},\dots,x^{\alpha}_{n}$ are uniformizing parameters for $U_{\alpha}$. Suppose that $\lambda \in \Z$. Then, the one-dimensional $\gl_n$-module $\bk_{\lambda}=\bk v_{\lambda}$ given by $T v_{\lambda} = \lambda \mathrm{tr}(T)  v_{\lambda}$,  is integrable. Denote by $v_{\lambda}^{\alpha} = 1 \otimes v_{\lambda} \in J^{\bk_{\lambda}}(U_{\alpha})$. We have
\[
\left ( f \xpartial{x^{\alpha}_i} \right) (g v_{\lambda}^{\alpha}) = \left ( f\fxpartial{g}{x^{\alpha}_i} + \lambda g \fxpartial{f}{x^{\alpha}_i}\right  )v_{\lambda}^{\alpha}
\]
for each $f,g \in \cO(U_{\alpha})$ and $i=1,\dots,n$. If $J_{\alpha\beta}$ denotes the determinant of the matrix with coefficients
$
\, \partial x^{\alpha}_i / \partial {x^{\beta}_j}, \ i,j=1,\dots,n,
$
then the transformation law for $\mathcal{J}^{\bk_{\lambda}}$ from $U_{\alpha}$ to $U_{\beta}$ is the map
$
v_{\lambda}^{\alpha} \mapsto J_{\alpha \beta}^{\lambda} v_{\lambda}^{\beta}$.
\end{example}

Any $\fsl_n$-module can be made into a $\gl_n$-module by setting the action of the identity matrix to zero. On the other hand, if $W$ is a simple finite-dimensional $\gl_n$-module, then the identity matrix acts as $n\lambda$ for some $\lambda \in \bk$. Hence, if $W$ is a simple finite-dimensional $\gl_n$, then there exists a simple finite-dimensional $\fsl_n$-module $W_0$, viewed as a $\gl_n$-module with a trivial action of the identity matrix, and $\lambda \in \bk$ such that
\[
W \cong W_0 \otimes \bk_{\lambda}.
\]
The $\gl_n$-module $W$ will be integrable if and only if $\lambda$ is an integer. In particular, the tensor module $\mathcal{J}^{W_0}$ is well-defined for any finite-dimensional $\fsl_n$-module $W_0$.

\section{Simple holonomic modules}\label{section:irreducibleholmodules}

In this section, we further investigate differentiable holonomic $\sAV$-modules. Our objective is to have a better understanding of the structure of simple $\sAV$-modules. Recall that an $\sAV$-module $\cM$ is simple if it is non-zero and its only $\sAV$-subsheaves are the trivial subsheaves $0$ and $\cM$, and $\cM(U)$ is either zero or a simple module over $\sAV(U)$ for each étale chart $U \subset X$. The main objective of this section is to show that we can associate to every simple holonomic $\sAV$-module $\cM$, a simple $\gl_n$-module $W$. We will then use this structure to decompose $\cM$ into the tensor product of two sheaves.

We begin by providing a local description of simple holonomic $\sAV$-modules.

\begin{proposition}\label{proposition:sheafofavmodulearetensormod}
Let $\cM$ be a simple differentiable holonomic $\sAV$-module. Then for a finite open cover $\{ U_{\alpha}\}$ of $X$ consisting of étale charts, $\cM$ is either zero or 
\[
\cM(U_{\alpha}) \cong T(P_{\alpha},W_{\alpha}),
\]
where $P_{\alpha}$ is a simple holonomic $\cD(U_{\alpha})$-module and $W_{\alpha}$ is a simple finite-dimensional $\gl_n$-module.
\end{proposition}
\begin{proof}
Since $X$ is a smooth quasi-projective variety, it admits a finite open cover $X=\bigcup U_{\alpha}$ of étale charts. For each $\alpha$, $\cM(U_{\alpha})$ is a simple $\sAV(U_{\alpha})$-module, or $\cM(U_{\alpha}) = 0$. If $\cM(U_{\alpha}) \neq 0$, by Proposition~\ref{proposition:containstensorproduct}, $\cM(U_{\alpha}) \cong T(P_{\alpha},W_{\alpha})$, where $P_{\alpha}$ is a simple holonomic $\cD(U)$-module and $W_{\alpha}$ is a simple finite-dimensional $\gl_n$-module. 
\end{proof}

Locally, in an étale chart, a simple holonomic $\sAV$-module is a tensor product of a $\cD$-module with a simple $\gl_n$-module. However, the $\cD$-module factor depends on the choice of uniformizing parameters, even when $X$ is étale. This reflects the notion of $\cL_+$-charged $\cD$-module in \cite{BB24}. Example~\ref{example:globalsectionwithlocalpar} below exhibits this behavior explicitly.   

Proposition~\ref{proposition:sheafofavmodulearetensormod} gives a local description of a simple differentiable holonomic $\sAV$-module. However, it is not obvious that the $\gl_n$-modules $W_{\alpha}$ that appear in Proposition~\ref{proposition:sheafofavmodulearetensormod} are isomorphic to each other. Next, we aim to prove this claim. For this, we analyze the action of the sheaf $\widehat{\cL}_+$, which will be reduced to an action of a $\gl_n$-bundle $\Theta \otimes_{\cO} \Omega^1$, focusing, in particular, on the action of the center of the algebra $U(\gl_n)$. We first recall key results concerning the center of $U(\gl_n)$ and its action on $\gl_n$-modules.

Let $E_{ij}$, $i,j=1,\dots,n$, be the standard basis of $\gln$. For each $k = 1,\dots,n$, we denote
\[
\Omega_k  = \sum_{i_1,\dots,i_k = 1,\dots,n} E_{i_1 i_2}E_{i_2 i_3}\cdots E_{i_k i_1}.
\]
By~\cite[Corollary 7.1.2]{Mol07}, the center of $U(\gl_n)$ is $\bk[\Omega_1,\dots,\Omega_n]$.

\begin{lemma}\label{lemma:twoglnisisoifcharequal}
Let $W_1$ and $W_2$ be two simple finite-dimensional $\gl_n$-modules. Then there exist scalars $\lambda_1,\dots,\lambda_n,\mu_1,\dots,\mu_n \in \bk$ such that $\Omega_k$ acts on $W_1$ as multiplication by $\lambda_k$ and on $W_2$ as multiplication by $\mu_k$, for every $k=1,\dots,n$. Furthermore, if $\lambda_k = \mu_k$ for all $k$, then $W_1$ and $W_2$ are isomorphic.
\end{lemma}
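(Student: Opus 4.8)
The first assertion is just Schur's Lemma. By \cite[Corollary 7.1.2]{Mol07} each $\Omega_k$ lies in the center of $U(\gln)$, so multiplication by $\Omega_k$ commutes with the action of all of $\gln$ and hence defines a $\gln$-endomorphism of the simple module $W_i$. As $\bk$ is algebraically closed and $W_i$ is finite-dimensional, $\End_{\gln}(W_i)=\bk$, so $\Omega_k$ acts on $W_1$ by a scalar $\lambda_k$ and on $W_2$ by a scalar $\mu_k$. The substance of the lemma is the second assertion, and the plan is to recover the highest weight of $W_i$ from the tuple of eigenvalues. Since, again by \cite[Corollary 7.1.2]{Mol07}, the elements $\Omega_1,\dots,\Omega_n$ generate the whole center $Z(U(\gln))=\bk[\Omega_1,\dots,\Omega_n]$, the scalars $\lambda_1,\dots,\lambda_n$ determine the central character $\chi_{W_1}\colon Z(U(\gln))\to\bk$ completely, and likewise $\mu_1,\dots,\mu_n$ determine $\chi_{W_2}$. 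Thus the hypothesis $\lambda_k=\mu_k$ for all $k$ is equivalent to $\chi_{W_1}=\chi_{W_2}$, and it remains to show that a simple finite-dimensional $\gln$-module is determined up to isomorphism by its central character.

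For this I would use the Harish-Chandra description of the central characters of $U(\gln)$. Write the highest weight of $W_i$ as $\mu^{(i)}=(\mu^{(i)}_1,\dots,\mu^{(i)}_n)\in\h^\ast$ with $\mu^{(i)}_j=\mu^{(i)}(E_{jj})$ and $\mu^{(i)}_j-\mu^{(i)}_{j+1}\in\Z_{\geq 0}$ for every $j$, and set $l^{(i)}_j=\mu^{(i)}_j-j$. Because the consecutive differences $\mu^{(i)}_j-\mu^{(i)}_{j+1}$ are nonnegative integers, the shifted coordinates satisfy $l^{(i)}_j-l^{(i)}_{j+1}=\mu^{(i)}_j-\mu^{(i)}_{j+1}+1\geq 1$, so $l^{(i)}_1>\cdots>l^{(i)}_n$ are strictly decreasing. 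The Harish-Chandra isomorphism identifies $Z(U(\gln))$ with the algebra of $S_n$-symmetric polynomials in $l_1,\dots,l_n$, and the eigenvalue of each central element on $W_i$ is obtained by evaluating the corresponding symmetric polynomial at $l^{(i)}_1,\dots,l^{(i)}_n$ (equivalently, one may invoke the explicit formulas for the eigenvalues of the Gelfand invariants $\Omega_k$ in \cite{Mol07}). Hence $\chi_{W_1}=\chi_{W_2}$ forces the unordered multisets $\{l^{(1)}_1,\dots,l^{(1)}_n\}$ and $\{l^{(2)}_1,\dots,l^{(2)}_n\}$ to coincide. Since both are strictly decreasing, the multiset determines the ordered tuple, so $l^{(1)}_j=l^{(2)}_j$ for all $j$, whence $\mu^{(1)}=\mu^{(2)}$ and therefore $W_1\cong W_2$.

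The only nontrivial step is the passage from equality of central characters to equality of highest weights; the existence of the scalars in the first assertion and the bookkeeping with the generators $\Omega_k$ are routine. The key structural input is that the shift $\mu_j\mapsto\mu_j-j$ turns the dominant weight of any finite-dimensional simple module into a strictly decreasing tuple, so that the $S_n$-orbit — which is all that the central character can detect, by Harish-Chandra — has a unique representative and thereby pins down the weight. I expect this to be the main point requiring care, since without the strict dominance after shifting, distinct weights in a common Weyl orbit could share a central character.
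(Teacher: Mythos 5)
Your proof is correct, and its skeleton matches the paper's: Schur's Lemma for the existence of the scalars, then the fact that a simple finite-dimensional $\gln$-module is determined by its central character. The difference is that the paper simply cites \cite{Hum08} for the second step, whereas you prove it via the Harish-Chandra isomorphism and the $\rho$-shift; this makes the argument self-contained and, usefully, visibly valid in the paper's actual setting, where highest weights of simple finite-dimensional $\gln$-modules need not be integral (the paper later uses $W\cong W_0\otimes\bk_\lambda$ with $\lambda\in\bk$ arbitrary, and \cite{Hum08} is stated over $\C$ in the category $\mathcal{O}$ framework). One small point of care in your write-up: over a general field $\bk$ the phrase ``strictly decreasing'' for the shifted tuple $l_1,\dots,l_n$ is not literally meaningful; what your computation actually shows is that $l_j-l_k\in\Z_{>0}$ for $j<k$, so all entries lie in a single coset $c+\Z$ and, writing $l_j=c+m_j$ with $m_j\in\Z$ strictly decreasing, the multiset does determine the ordered tuple. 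With that rephrasing the passage from equality of central characters to equality of highest weights is airtight, and your identification of this as the only nontrivial step is accurate.
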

\begin{proof}
By Schur's Lemma, central elements act on simple modules by scalars.
Furthermore, by~\cite{Hum08}, a simple finite-dimensional $\gl_n$-module is determined by the central character $\chi: Z(U(\gl_n)) \rightarrow \bk$ given by the action of the center on it.
\end{proof}

The $\gl_n$-bundle $\Theta \otimes_{\cO} \Omega^1$ of $(1,1)$-tensors induces a sheaf $\glnbund$ of associative algebras on $X$, whose sections in étale charts $U_\alpha \subset X$ are isomorphic to $\cO(U_\alpha) \otimes U(\gl_n)$. 

\begin{theorem}\label{theorem:centerisasubsheaf}
The center of the sheaf $\glnbund$ is a trivial sheaf $\Zglnbund$ with 
$\Zglnbund (U_\alpha) = \cO(U_\alpha) \otimes \bk[\Omega_1,\dots,\Omega_n]$ for each 
étale chart $U_\alpha$ and trivial transformation laws between the charts.
\end{theorem}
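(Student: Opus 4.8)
### Proof Proposal

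The statement asserts that the center of the sheaf $\glnbund$ of $(1,1)$-tensor algebras is a \emph{trivial} sheaf: locally it is $\cO(U_\alpha) \otimes \bk[\Omega_1,\dots,\Omega_n]$, and the transition maps between charts act trivially on these central generators. The plan is to first pin down the center locally, and then to analyze what the transformation law does to the central elements $\Omega_k$.

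For the local statement, fix an étale chart $U_\alpha$ with $\glnbund(U_\alpha) \cong \cO(U_\alpha) \otimes U(\gl_n)$. Since $\cO(U_\alpha)$ is a commutative $\bk$-algebra and $U(\gl_n)$ is a $\bk$-algebra, the center of the tensor product is the tensor product of centers, i.e.\ $Z(\cO(U_\alpha) \otimes U(\gl_n)) = \cO(U_\alpha) \otimes Z(U(\gl_n))$. I would justify this by the standard argument: an element $z = \sum_i f_i \otimes u_i$ with $\{f_i\}$ linearly independent over $\bk$ is central if and only if each $u_i$ lies in $Z(U(\gl_n))$, which follows from commuting $z$ with elements of the form $1 \otimes E_{pq}$ and with $g \otimes 1$ and comparing coefficients in the free $\cO(U_\alpha)$-module structure. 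By \cite[Corollary 7.1.2]{Mol07}, invoked just above the theorem, $Z(U(\gl_n)) = \bk[\Omega_1,\dots,\Omega_n]$, giving $\Zglnbund(U_\alpha) = \cO(U_\alpha) \otimes \bk[\Omega_1,\dots,\Omega_n]$.

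The substantive part is the triviality of the transition maps, and this is the step I expect to be the main obstacle. On the overlap $U_\alpha \cap U_\beta$, the gluing of $\glnbund$ is governed by conjugation by the Jacobian matrix: the copy of $\gl_n$ associated with the frame $\partial_{x^\alpha}$ is identified with the copy associated with $\partial_{x^\beta}$ via the adjoint action of $g := \varphi_{U_\alpha U_\beta} \in \GL_n(\cO(U_\alpha \cap U_\beta))$. The key point is that each $\Omega_k = \sum_{i_1,\dots,i_k} E_{i_1 i_2} E_{i_2 i_3} \cdots E_{i_k i_1}$ is invariant under the adjoint action of $\GL_n$: indeed, $\Omega_k$ is (up to lower-order central corrections) the trace of the $k$-th power of the matrix $(E_{ij})$, and conjugating the matrix $E = (E_{ij})$ by $g \in \GL_n$ sends $E \mapsto g E g^{-1}$, under which traces of powers are preserved. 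I would verify this by checking directly that $\mathrm{Ad}(g)\,\Omega_k = \Omega_k$ for every $g \in \GL_n(\cO(U_\alpha \cap U_\beta))$, using that $\Omega_k$ generate the center and that the center is pointwise fixed by the inner automorphisms $\mathrm{Ad}(g)$ for invertible $g$ (an element of an associative algebra is central precisely when it is fixed by all inner automorphisms; here we only need invariance under $\mathrm{Ad}(\GL_n)$, which holds because each $\Omega_k$ is central and $\GL_n$ is generated within $U(\gl_n)$ by exponentials/group-likes whose conjugation action on the center is trivial). Since the generators $\Omega_k$ are fixed by the gluing isomorphism, the transformation law for $\Zglnbund$ from $U_\alpha$ to $U_\beta$ is the identity on $\bk[\Omega_1,\dots,\Omega_n]$, twisted only by the $\cO$-linear change of the scalar factor, which is exactly the statement that $\Zglnbund$ is the trivial sheaf $\cO \otimes \bk[\Omega_1,\dots,\Omega_n]$.

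The delicate point to get right is that $\mathrm{Ad}(g)$ is a genuine algebra automorphism of $\cO(U_\alpha \cap U_\beta) \otimes U(\gl_n)$ for $g$ with entries in $\cO(U_\alpha\cap U_\beta)$ rather than in $\bk$; I would handle this by noting that the invariance of $\Omega_k$ under $\mathrm{Ad}(\GL_n)$ is a polynomial identity in the entries of $g$ and $g^{-1}$ that holds over $\bk$, hence holds after base change to any commutative $\bk$-algebra, in particular $\cO(U_\alpha \cap U_\beta)$. Once this $\cO$-linear $\mathrm{Ad}$-invariance is established, the cocycle condition for the gluing is automatic, and the theorem follows.
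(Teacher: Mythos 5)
Your proposal is correct and takes essentially the same approach as the paper: the paper's proof expands the explicit transformation law $f\otimes E_{ij}\mapsto \sum_{a,b} f\,\fxpartial{y_a}{x_j}\fxpartial{x_i}{y_b}\otimes E_{ba}$ and applies the chain-rule identity $\sum_{l}\fxpartial{y_i}{x_l}\fxpartial{x_l}{y_j}=\delta_{ij}$ to conclude $\psi_{\alpha\beta}(1\otimes\Omega_k)=1\otimes\Omega_k$, which is exactly your matrix-form observation that $\Omega_k=\mathrm{tr}(E^k)$ (precisely, with no lower-order corrections needed) is preserved under conjugating the generator matrix $E=(E_{ij})$ by the Jacobian, whose entries lie in $\cO(U_\alpha\cap U_\beta)$ and hence commute with the $E_{ij}$, while your explicit verification that $Z(\cO(U_\alpha)\otimes U(\gl_n))=\cO(U_\alpha)\otimes Z(U(\gl_n))$ merely makes explicit what the paper leaves implicit. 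One caveat: your fallback justification via inner automorphisms --- that $\GL_n$ is ``generated within $U(\gl_n)$ by exponentials/group-likes'' --- is not literally correct, since $\GL_n$ does not embed in $U(\gl_n)$ and $\mathrm{Ad}(g)$ is not an inner automorphism of $\cO(U_\alpha\cap U_\beta)\otimes U(\gl_n)$ (the correct abstract route is that $\mathrm{ad}$-invariance of the center integrates to $\mathrm{Ad}(\GL_n)$-invariance by connectedness in characteristic zero), but this detour and the accompanying base-change argument are redundant, because your trace computation is already valid verbatim over the commutative ring $\cO(U_\alpha\cap U_\beta)$.
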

\begin{proof}
Suppose $U_\alpha$ and $U_\beta$ are elements of the étale cover with uniformizing parameters $x_1,\dots,x_n$ and $y_1,\dots,y_n$, respectively. Explicitly, the transformation law $\psi_{\alpha\beta}$ for the sheaf $\glnbund$ is
\[
f \otimes E_{ij} \mapsto  \sum_{a=1}^n \sum_{b=1}^n f \fxpartial{y_a}{x_j} \fxpartial{x_i}{y_b} \otimes E_{ba}.
\]
Since 
\[
\delta_{ij} = \fxpartial{y_i(x(y))}{y_j}= \sum_{l=1}^n \fxpartial{y_i}{x_l} \fxpartial{x_l}{y_j},
\]
it follows that
\begin{align*}
& \psi_{\alpha\beta} \left(\sum 1 \otimes E_{i_1i_2}E_{i_2i_3}\cdots E_{i_ki_1} \right ) \\
= & \sum \fxpartial{y_{a_k}}{x_{i_1}} \fxpartial{x_{i_1}}{y_{b_1}} \fxpartial{y_{a_1}}{x_{i_2}} \fxpartial{x_{i_2}}{y_{b_2}} \cdots \fxpartial{y_{a_{k-1}}}{x_{i_k}} \fxpartial{x_{i_k}}{x_{b_k}} \otimes E_{b_1a_1}E_{b_2a_2}\cdots E_{b_k a_k} \\
= & \sum \delta_{b_1a_k}\delta_{a_1b_2}\delta_{a_2b_3}\cdots \delta_{b_ka_{k-1}} \otimes E_{b_1a_1}E_{b_2a_2}\cdots E_{b_k a_k} \\
 = & \sum 1 \otimes E_{b_1b_2}E_{b_2b_3}\cdots E_{b_kb_1}.
\end{align*}
We conclude that $\psi_{\alpha\beta}(1 \otimes \Omega_k) = 1 \otimes \Omega_k$. Therefore, the center of $\glnbund$ is a trivial sheaf $\Zglnbund= \cO \otimes Z(U(\gl_n))$.
\end{proof}
	Let $Z_{\chi}$ be the kernel of the character $\chi: Z(U(\gl_n)) \rightarrow \bk$. With a slight abuse of notations, we identify $Z_{\chi}$ with the corresponding subspace of constant sections $1 \otimes Z_{\chi}$ in the sheaf $\mathcal{Z}$, constructed above. Note that for any $\sAV$-module $\cM$, the kernel of $Z_{\chi}$ in $\cM$ is an $\sAV$-submodule in $\cM$.

We are ready to state our second result about the local description of irreducible holonomic $\sAV$-modules. 

\begin{proposition}\label{proposition:sameglnmoduleineverychart}
    Let $\cM$ be a simple holonomic $\sAV$-module. Then there exists a simple $\gl_n$-module $W$ such that $\cM(U) \cong T(P,W)$ for every étale chart $U\subset X$ for some simple $\cD(U)$-module $P$ or $P=0$. In particular, $\cM$ has type $W$.
\end{proposition}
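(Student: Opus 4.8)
The plan is to upgrade the purely local statement of Proposition~\ref{proposition:sheafofavmodulearetensormod} to a global one by exploiting the trivial sheaf $\Zglnbund$ of central elements constructed in Theorem~\ref{theorem:centerisasubsheaf}. First I would fix the finite étale cover $\{U_\alpha\}$ given by Proposition~\ref{proposition:sheafofavmodulearetensormod} and, on each chart where $\cM(U_\alpha)\neq 0$, record the scalars $\lambda_k^{(\alpha)}$ by which $\Omega_k$ acts on the simple $\gl_n$-module $W_\alpha$; these exist by Schur's Lemma as in Lemma~\ref{lemma:twoglnisisoifcharequal}. The central claim, by Lemma~\ref{lemma:twoglnisisoifcharequal}, is that $W_\alpha\cong W_\beta$ as soon as $\lambda_k^{(\alpha)}=\lambda_k^{(\beta)}$ for all $k$, so everything reduces to showing these central characters agree across charts.

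The key step is to realize the action of $\Omega_k$ on $\cM$ not chart-by-chart but as the action of a \emph{global} endomorphism coming from the sheaf $\Zglnbund$. On an étale chart $U_\alpha$, the $\gl_n$-factor acts on the tensor module $T(P_\alpha,W_\alpha)$ through the summand $L^0\cong\gl_n$ of $\hcL_+$, i.e.\ through the image of $\glnbund(U_\alpha)\cong\cO(U_\alpha)\otimes U(\gl_n)$. Theorem~\ref{theorem:centerisasubsheaf} tells us that the constant section $1\otimes\Omega_k$ is genuinely global: its transition maps $\psi_{\alpha\beta}$ fix it, so it defines an $\cO$-linear, $\sAV$-linear endomorphism $\widehat{\Omega}_k$ of the sheaf $\cM$. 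Concretely, I would check that $\widehat{\Omega}_k$ commutes with the $\sAV$-action (it lands in the center of the relevant bundle and is $\cO$-linear because each $\Omega_k$ commutes with $\gl_n$ and is built from degree-zero operators), so that for each central character value $\lambda_k$ the subsheaf $\ker(\widehat{\Omega}_k-\lambda_k)\subset\cM$ is an $\sAV$-submodule.

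Now I would invoke simplicity. Since $\cM$ is a simple $\sAV$-module, any $\sAV$-submodule is either $0$ or all of $\cM$. Pick one chart $U_{\alpha_0}$ with $\cM(U_{\alpha_0})\neq0$ and let $\lambda_k:=\lambda_k^{(\alpha_0)}$ be its central character values. The subsheaf $\cM':=\bigcap_k\ker(\widehat{\Omega}_k-\lambda_k)$ is a nonzero $\sAV$-submodule, as it contains $\cM(U_{\alpha_0})$, hence $\cM'=\cM$ by simplicity. Evaluating on any other chart $U_\beta$ with $\cM(U_\beta)\neq0$ forces $\Omega_k$ to act on $W_\beta$ by the same scalar $\lambda_k$, so $\lambda_k^{(\beta)}=\lambda_k^{(\alpha_0)}$ for all $k$; Lemma~\ref{lemma:twoglnisisoifcharequal} then yields $W_\beta\cong W_{\alpha_0}=:W$. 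Thus $\cM(U)\cong T(P,W)$ on every chart with a common simple $\gl_n$-module $W$, the $\cD(U)$-module $P$ being simple or zero as in Proposition~\ref{proposition:sheafofavmodulearetensormod}; by definition this says $\cM$ has type $W$.

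I expect the main obstacle to be the careful verification that the central section $\widehat{\Omega}_k$ really descends to a well-defined global, $\sAV$-linear operator on $\cM$ whose kernel-type loci are honest quasi-coherent $\sAV$-subsheaves. The transition-law computation of Theorem~\ref{theorem:centerisasubsheaf} guarantees the gluing, but one must confirm that $\Omega_k$ acts on $\cM(U_\alpha)=T(P_\alpha,W_\alpha)$ purely through the $\gl_n=L^0$ part (and not through higher graded pieces of $\hcL_+$) and that this action is $\cO(U_\alpha)$-linear and commutes with the differential operators in $\cD(U_\alpha)$, so that the eigen-subsheaf is $\sAV$-stable and simplicity can be applied. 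Once this is in place, the rest is an immediate application of Lemma~\ref{lemma:twoglnisisoifcharequal}.
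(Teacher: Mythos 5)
Your proposal is correct and is essentially the paper's own argument: the paper likewise uses the constant central sections $\Omega_k$ of the trivial sheaf $\Zglnbund$ from Theorem~\ref{theorem:centerisasubsheaf}, observes that the kernel of the central-character ideal $Z_\chi$ in $\cM$ is an $\sAV$-subsheaf, invokes simplicity to get $\cM=\ker Z_\chi$, and concludes $W_\alpha\cong W_\beta$ via Lemma~\ref{lemma:twoglnisisoifcharequal}. Your only cosmetic deviation is working with the individual eigen-kernels $\ker(\widehat{\Omega}_k-\lambda_k)$ rather than the ideal $Z_\chi$, which amounts to the same submodule.
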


\begin{proof}
	Let $U_1 \subset X$ be an étale chart such that $\cM(U)\cong T(P_1,W_1)$ is non-zero. Let $\chi$ be the character of $Z(U(\gl_n))$ corresponding to $W_1$. The kernel of $Z_{\chi}$ is a non-zero subsheaf in $\cM$ since $Z_{\chi}$ annihilates $T(P_1,W_1)$. Since $\cM$ is simple, $\cM = \ker Z_{\chi}$. Consider an étale chart $U_2$ with $\cM(U_2) \cong T(P_2,W_2)$. Since $Z_{\chi}$ annihilates $T(P_2,W_2)$ as well, we get that $W_1\cong W_2$.
\end{proof}

Let us recall a construction of a gauge module. Let $\cP$ be a $\cD$-module, which is a vector bundle, and let $W$ be a simple finite-dimensional $\gl_n$-module.
Let $\cM$ be a simple holonomic $\sAV$-module such that in étale charts $U \subset X$ we have $\cM (U) = \cP(U) \otimes W$ with $\cP(U)$ being a free $\cO(U)$-module. Suppose $\cP(U) = \cO(U) \otimes P^\prime$ for some finite-dimensional subspace $P^\prime \subset \cP(U)$. Then locally, the action of vector fields is given by
\[
f \xpartial{x_i} ( g p \otimes w) = f\fxpartial{g}{x_i}p \otimes w + fg \left (\xpartial{x_i} p \right ) \otimes w + g \sum_{j=1}^n \fxpartial{f}{x_j} p \otimes E_{ji}w
\]
for $f, g \in \cO(U)$, $p \in P'$, $w \in W$, $i=1,\dots,n$. An ingredient to this construction is the maps
\[
\xpartial{x_i} : P' \otimes W  \rightarrow \cO(U)\otimes P'\otimes W , \quad i=1,\dots,n,
\] 
which are called the \emph{gauge fields}.

\begin{example}\label{example:globalsectionwithlocalpar}
	Consider the affine variety $X=\Spec \bk[t,s]/(st-1)$. Define the $A\cV$-module $M_1 = \bk[t,t^{-1}]v_1$ on $X$ by
	\[
	t^k \xpartial{t} (t^l d) = (l+\lambda k)t^{k+l-1}v_1
	\]
	and the $A\cV$-module $M_2=\bk[s,s^{-1}]v_2$ by 
	\[
	s^k \xpartial{s} (s^l d_2) = \left (l+\lambda k \right )s^{k+l-1}v_2
	\]
	Both $M_1$ and $M_2$ are gauge modules of type $\bk_{\lambda}$. If we rewrite the action of vector fields on $M_1$ choosing $s$ as local parameter, we will have
	\[
	\left (s^k \xpartial{s} \right )s^lv_1 = \left (-t^{2-k}\xpartial{t} \right )t^{-l} v_1 = -(-l + (2-k)\lambda )t^{-l-k+1}v_1 = (l + (k-2)\lambda )s^{l+k-1}v_1
	\]
	for every $l,k\in \Z$. Hence, the gauge field $-2\lambda s^{-1}$ appears if the uniformizing parameter $s$ is chosen instead. If $2\lambda \in \Z$, then this gauge field may be trivialized by choosing $v_1' = s^{2\lambda} v_1$. In particular, $M_1$ and $M_2$ are isomorphic if and only if $\lambda$ is a half-integer, and the isomorphism is $v_2 \mapsto s^{2\lambda} v_1$. If $\lambda$ is not a half-integer, this change of uniformizing parameters yields gauge fields that cannot be trivialized. 
	
	Alternatively, we can realize $X \cong \Spec \bk[x,y]/(x^2+y^2-1)$ and choose $x \in \bk[x,y]/(x^2+y^2-1)$ as a uniformizing parameter in $D(y)=\{\mathfrak{p} \in X \mid y \notin \mathfrak{p} \}$. Again, if we consider $\widetilde{M_1}(D(y))$ with uniformizing parameter $x$, a gauge field will emerge, and it can only be trivialized if $\lambda \in \Z$. 
\end{example}
We can now move on to the main result of this section, where we prove that the modules $P_{\alpha}$ that appear in Proposition~\ref{proposition:sameglnmoduleineverychart} can be glued into a $\cD$-module if $W$ is integrable. 

\begin{theorem}\label{theorem:sheavesofavmodulesaretensorproducct}
	Let $\cM$ be a simple differentiable holonomic $\sAV$-module of type $W$. If $W$ is integrable, then there exists a simple $\cD$-module $\mathcal{P}$ such that 
\[ \cM \cong \mathcal{P} \otimes_{\cO} \mathcal{J}^W. \]
\end{theorem}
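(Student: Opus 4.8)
The plan is to realize $\mathcal{P}$ as a multiplicity sheaf obtained by ``dividing $\cM$ by'' the tensor module $\mathcal{J}^W$, using that integrability of $W$ makes $\mathcal{J}^W$ and $\mathcal{J}^{W^*}$ globally defined vector bundles of type $W$ and $W^*$. First I would record that $\cM$ is $2$-differentiable: by Proposition~\ref{proposition:sameglnmoduleineverychart} the module $W$ is a simple $\gl_n$-module, so $\aL_{\geq 1}W=0$, and in each étale chart $\cM(U)\cong T(P_U,W)$ with $\aL_{\geq 1}$ acting trivially. Restricting the local $\widehat{\cL}_+$-action to $L^0\cong\gl_n$ then makes $\cM$ a module over the $\gl_n$-bundle $\glnbund=\Theta\otimes_{\cO}\Omega^1$. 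The delicate point is coordinate-independence of this $\gl_n$-action: in the transformation law for $\sAV$ recorded after Theorem~\ref{theorem:localisotheorem}, the image of a degree-zero generator $X_j\partial_{X_p}$ equals its $\psi_{\alpha\beta}$-transform in $\glnbund$ (as in Theorem~\ref{theorem:centerisasubsheaf}) plus a correction lying in $\aL_{\geq 1}$; since $\aL_{\geq 1}$ annihilates a $2$-differentiable module, the correction acts as zero and the local $\gl_n$-actions glue. The same reasoning applies to $\mathcal{J}^W$ and to $\mathcal{J}^{W^*}\cong(\mathcal{J}^W)^*$.

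Next I would form the $\sAV$-module $\cM\otimes_{\cO}\mathcal{J}^{W^*}$, which is of type $W\otimes W^*$, and set $\mathcal{P}=(\cM\otimes_{\cO}\mathcal{J}^{W^*})^{\gl_n}$, the subsheaf of invariants for the global $\glnbund$-action. Locally $\cM(U)\otimes_{\cO}\mathcal{J}^{W^*}(U)\cong P_U\otimes(W\otimes W^*)$ with the diagonal $\gl_n$-action on $W\otimes W^*$, so by Schur's lemma $(W\otimes W^*)^{\gl_n}=\bk c$ is one-dimensional, where $c$ is the canonical invariant, and hence $\mathcal{P}(U)\cong P_U$. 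I would then verify that $\mathcal{P}$ is an $\sAV$-submodule that is in fact a $\cD$-module. Since on a type-$W$ module $\cO$ and $L^0$ commute, $\mathcal{P}$ is an $\cO$-submodule; and for a vector field the local formula $f\partial_{x_i}(p\otimes c)=(f\partial_{x_i}p)\otimes c+\sum_j\frac{\partial f}{\partial x_j}\,p\otimes E_{ji}c$ has vanishing $\gl_n$-correction because $E_{ji}c=0$ (the $E_{ji}$ acting diagonally on $W\otimes W^*$). Thus vector fields preserve $\mathcal{P}$ and act $\cO$-linearly in $f$, so $\mathcal{P}$ is $1$-differentiable, i.e. a $\cD$-module, with $\mathcal{P}(U)\cong P_U$ as $\cD(U)$-modules.

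Finally I would produce the isomorphism. The $\gl_n$-equivariant contraction $W^*\otimes W\to\bk$ induces, by functoriality of the tensor-module construction and $\mathcal{J}^{W^*}\otimes_{\cO}\mathcal{J}^W\cong\mathcal{J}^{W^*\otimes W}$, an $\sAV$-morphism $\mathcal{J}^{W^*}\otimes_{\cO}\mathcal{J}^W\to\mathcal{J}^{\bk}=\cO$. Composing the inclusion $\mathcal{P}\subset\cM\otimes_{\cO}\mathcal{J}^{W^*}$ with this contraction yields an $\sAV$-morphism $\mathrm{ev}\colon\mathcal{P}\otimes_{\cO}\mathcal{J}^W\to\cM$. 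In a chart it sends $(p\otimes c)\otimes(g\otimes w)\mapsto g\,(p\otimes w)$, which under the identifications $\mathcal{P}(U)\otimes_{\cO}\mathcal{J}^W(U)\cong P_U\otimes W\cong\cM(U)$ is the identity; hence $\mathrm{ev}$ is an isomorphism. Simplicity and holonomicity of $\mathcal{P}$ hold locally by Proposition~\ref{proposition:sameglnmoduleineverychart}, and globally from $\cM\cong\mathcal{P}\otimes_{\cO}\mathcal{J}^W$: any nonzero proper $\cD$-submodule of $\mathcal{P}$ would give, after tensoring with the vector bundle $\mathcal{J}^W$, a nonzero proper $\sAV$-submodule of $\cM$, contradicting simplicity.

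The step I expect to be the main obstacle is the coordinate-independence of the $\gl_n$-action in the first paragraph, which is precisely where $2$-differentiability (so that the $\aL_{\geq 1}$-corrections in the $\sAV$ transformation law vanish) and the integrability of $W$ (so that $\mathcal{J}^W,\mathcal{J}^{W^*}$ exist globally) are both used; once the global $\glnbund$-module structure is in place, everything else reduces to the fiberwise computation $(W\otimes W^*)^{\gl_n}=\bk c$ together with the explicit local forms of $\cM$, $\mathcal{J}^W$, and $\mathcal{J}^{W^*}$.
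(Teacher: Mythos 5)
Your proposal is correct and, at its core, is the paper's own argument: both form $\cM \otimes_{\cO} \mathcal{J}^{W^{\ast}}$ and extract from it a multiplicity sheaf $\mathcal{P}$ with $\mathcal{P}(U) \cong P_U$, then reconstruct $\cM \cong \mathcal{P}\otimes_{\cO}\mathcal{J}^W$. The one genuine difference is how $\mathcal{P}$ is cut out. The paper defines $\mathcal{P}$ as the kernel of $Z_0$, the constant central sections with zero central character, so the only global input it needs is Theorem~\ref{theorem:centerisasubsheaf} --- that the center of $\glnbund$ is a \emph{trivial} subsheaf; coordinate-independence of the full $\gl_n$-action is never invoked in the proof itself. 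You instead take $\gl_n$-invariants, which forces you to glue the entire fiberwise $\gl_n$-action across charts; your justification (the degree-zero part of the $\sAV$ transformation law is exactly the $(1,1)$-tensor law $\psi_{\alpha\beta}$ of $\glnbund$, and the $\aL_{\geq 1}$-corrections annihilate a $2$-differentiable module) is sound and is precisely the reduction the paper asserts when it introduces the action of $\glnbund$ before Theorem~\ref{theorem:centerisasubsheaf}. The two subsheaves coincide: $W \otimes W^{\ast}$ is a semisimple $\gl_n$-module (the identity matrix acts by $0$), and by Lemma~\ref{lemma:twoglnisisoifcharequal} the trivial module is the only finite-dimensional simple with trivial central character, so $\ker Z_0 = (W\otimes W^{\ast})^{\gl_n}$ fiberwise. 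What your version buys is explicitness where the paper is terse: the paper settles the global isomorphism with ``compatible with gluing by construction,'' whereas your evaluation morphism $\mathcal{P}\otimes_{\cO}\mathcal{J}^W \to \cM$, built from the equivariant contraction via $\mathcal{J}^{W^{\ast}}\otimes_{\cO}\mathcal{J}^W \cong \mathcal{J}^{W^{\ast}\otimes W} \to \cO$ and checked to be the identity chart by chart, together with your tensoring argument for the simplicity of $\mathcal{P}$ (a proper $\cD$-submodule would yield, after tensoring with the vector bundle $\mathcal{J}^W$, a proper $\sAV$-submodule of $\cM$), supplies exactly the details the paper leaves implicit.
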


\begin{proof}
	Because $W$ is integrable, $\mathcal{J}^{W}$ and $\mathcal{J}^{W^\ast}$ are well-defined tensor modules. Let $Z_0$ be the space of constant sections in $\mathcal{Z}$ corresponding to the zero central character. Let $\mathcal{P}$ be the kernel of the action of $Z_0$ on $\cM \otimes_{\cO} \mathcal{J}^{W^{\ast}}$. We claim that $\mathcal{P}$ is a $\cD$-module and $\cM \cong \mathcal{P} \otimes_{\cO} \mathcal{J}^W$. Since $\mathcal{P}$ has type $\bk_0$, it is a $\cD$-module. Moreover, if $\cM(U) \cong T(P_U,W)$, then $\mathcal{P}(U) \cong P_U$. Therefore, $\cM \cong \mathcal{P} \otimes_{\cO}\mathcal{J}^{W}$ since these local isomorphisms are compatible with gluing by construction.
\end{proof}

The notion of an $\cL_+$-charged $\cD$-module was introduced in \cite{BB24}. In the case where the action of $\cL_+$ reduces to a $1$-dimensional representation $\bk_{\lambda}$, this definition simplifies to the following.

\begin{definition}
	Let $\lambda \in \bk$. We call a sheaf $\mathcal{F}$ a \emph{$\lambda$-charged $\cD$-module} if for each étale chart $U_1$ with uniformizing parameters $x_1,\dots,x_n$ we have a $\cD(U_1)$-module structure on $\mathcal{F}(U_1)$ and, on the intersection with another étale chart $U_2$ with uniformizing parameters $y_1,\dots,y_n$, the transformation law of differential operators is given by
	\[
	f\xpartial{x_i} \mapsto \sum_{j=1}^n f\fxpartial{y_j}{x_i} \xpartial{y_j} + \lambda f \sum_{j, k=1}^n \frac{\partial^2 y_j}{\partial x_k \partial x_i} \fxpartial{x_k}{y_j}.
	\]
\end{definition}

\begin{theorem}{\cite[Theorem 15]{BB24}}
	A $\lambda$-charged $\cD$-module $\mathcal{F}$ admits a structure of a $2$-differentiable $\sAV$-module of type $\bk_{\lambda}$. In a étale chart $U \subset X$ with uniformizing parameters $x_1,\dots,x_n$, the representation $\rho$ of $\Theta(U)$ is given by
	\[
	\rho \left (f\xpartial{x_i} \right ) = f\xpartial{x_i} + \lambda \fxpartial{f}{x_i}.
	\]
\end{theorem}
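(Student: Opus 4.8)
The plan is to equip $\mathcal{F}$ with its $\sAV$-structure one étale chart at a time, via the tensor construction $T(-,\bk_\lambda)$, and then check that these local structures are compatible with the gluing already carried by $\mathcal{F}$ as a charged $\cD$-module. On an étale chart $U$ with uniformizing parameters $x_1,\dots,x_n$, since $\bk_\lambda=\bk v_\lambda$ is one-dimensional I identify $T(\mathcal{F}(U),\bk_\lambda)=\mathcal{F}(U)\otimes\bk_\lambda$ with $\mathcal{F}(U)$ as an $\cO(U)$-module. Reading off the $\Theta(U)$-action from the formula defining $T(P,W)$ and using that $\bk_\lambda$ is a $\gl_n=L^0$-module, so that only the degree-one terms $X_j\xpartial{X_i}\leftrightarrow E_{ji}$ survive and act by $E_{ji}v_\lambda=\lambda\delta_{ji}v_\lambda$, the infinite sum collapses to
\[
\rho\left(f\xpartial{x_i}\right)=f\xpartial{x_i}+\lambda\fxpartial{f}{x_i},
\]
which is the stated formula. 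Since $\bk_\lambda$ is annihilated by $\aL_{\geq 1}$, the module $T(\mathcal{F}(U),\bk_\lambda)$ is a $2$-differentiable $A\cV$-module, so by Lemma~\ref{lemma:avmodulesheafifisdiff} it carries a well-defined $\sAV(U)$-structure, and $\mathcal{F}$ is locally of type $\bk_\lambda$.

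The main point, and the heart of the argument, is that this locally defined $\Theta$-action respects the gluing of $\mathcal{F}$, i.e.\ that the $\lambda$-charged transformation law for differential operators coincides with the transformation law for $\sAV$-modules of type $\bk_\lambda$. I would verify this on the generating fields $\xpartial{x_i}$ and extend $\cO$-linearly. Writing $\xpartial{x_i}=\sum_j\fxpartial{y_j}{x_i}\xpartial{y_j}$ and applying $\rho$ in the $y$-chart produces the correction term $\lambda\sum_j\xpartial{y_j}\left(\fxpartial{y_j}{x_i}\right)$; the chain rule $\xpartial{y_j}\left(\fxpartial{y_j}{x_i}\right)=\sum_k\fxpartial{x_k}{y_j}\frac{\partial^2 y_j}{\partial x_k\,\partial x_i}$ rewrites this as exactly the extra term $\lambda\sum_{j,k}\frac{\partial^2 y_j}{\partial x_k\,\partial x_i}\fxpartial{x_k}{y_j}$ appearing in the charged transformation law. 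The same computation for a general field $f\xpartial{x_i}$ goes through: the chain rule $\sum_j\fxpartial{f}{y_j}\fxpartial{y_j}{x_i}=\fxpartial{f}{x_i}$ absorbs the first-derivative discrepancy, and the second-derivative identity accounts for the $\lambda$-correction, so the two expressions for $\rho\!\left(f\xpartial{x_i}\right)$ agree on the overlap.

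Equivalently, one can obtain this by specializing the completed $\sAV$ transformation law recorded after Theorem~\ref{theorem:localisotheorem} to $W=\bk_\lambda$: the terms of order $\geq 2$ in $Y$ annihilate $v_\lambda$, while the degree-zero ($\gl_n$) part of $\fxpartial{H_j}{x_i}(G(y+Y))-\fxpartial{H_j}{x_i}(G(y))$ acts through $E_{ji}v_\lambda=\lambda\delta_{ji}v_\lambda$ and reproduces the same $\lambda$-correction. I expect the only real obstacle to be the bookkeeping in this last step — correctly isolating the $\gl_n$-component of the completed transformation, matching index conventions, and invoking the symmetry of second derivatives — together with confirming the triple-overlap cocycle condition, which is automatic since the charged transformation is the $\gl_n$-reduction of the already-consistent $\sAV$ transformation law. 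Once compatibility is established, the local $\sAV(U)$-modules $T(\mathcal{F}(U),\bk_\lambda)$ glue to a global $2$-differentiable $\sAV$-module of type $\bk_\lambda$ with the asserted representation $\rho$, completing the proof.
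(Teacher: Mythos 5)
This statement is imported: the paper cites it as \cite[Theorem 15]{BB24} and gives no proof of its own, so there is no in-paper argument to compare against; your proposal has to stand on its own, and it does. Your argument is correct and is the natural one suggested by the paper's framework: locally you realize $\mathcal{F}(U)\otimes\bk_\lambda\cong\mathcal{F}(U)$ as $T(\mathcal{F}(U),\bk_\lambda)$, observe that $\aL_{\geq 1}\bk_\lambda=0$ collapses the tensor-construction formula to $\rho\left(f\xpartial{x_i}\right)=f\xpartial{x_i}+\lambda\fxpartial{f}{x_i}$ (using $X_j\xpartial{X_i}\leftrightarrow E_{ji}$ and $E_{ji}v_\lambda=\lambda\delta_{ji}v_\lambda$, consistent with Example~\ref{klambda}), which gives $2$-differentiability and type $\bk_\lambda$; and the heart of the matter is exactly the overlap check you perform. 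Your computation there is right: in the $y$-chart the vector field $f\xpartial{x_i}=\sum_j f\fxpartial{y_j}{x_i}\xpartial{y_j}$ acts by $\sum_j f\fxpartial{y_j}{x_i}\xpartial{y_j}+\lambda\fxpartial{f}{x_i}+\lambda f\sum_{j,k}\frac{\partial^2 y_j}{\partial x_k\partial x_i}\fxpartial{x_k}{y_j}$, and in the $x$-chart it acts by $f\xpartial{x_i}+\lambda\fxpartial{f}{x_i}$ with the operator $f\xpartial{x_i}$ interpreted through the $\lambda$-charged transformation law — the two expressions coincide term by term, the $\lambda\fxpartial{f}{x_i}$ pieces matching and the second-derivative identity $\xpartial{y_j}\left(\fxpartial{y_j}{x_i}\right)=\sum_k\fxpartial{x_k}{y_j}\frac{\partial^2 y_j}{\partial x_k\partial x_i}$ accounting for the charge term. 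Your closing observation is also the right one: no separate cocycle verification is needed, since $\mathcal{F}$ is already a sheaf and you are only checking that a chartwise-defined $\Theta$-action is chart-independent on pairwise overlaps; alternatively, specializing the completed $\sAV$ transformation law recorded after Theorem~\ref{theorem:localisotheorem} to $\bk_\lambda$ gives the same conclusion, since terms of $Y$-degree at least two annihilate $v_\lambda$. The only blemishes are cosmetic: the left-hand side of your chain-rule display has the index $j$ doing double duty (it should be read for each fixed $j$, then summed), and invoking Lemma~\ref{lemma:avmodulesheafifisdiff} is slightly redundant given that $\mathcal{F}$ is quasi-coherent to begin with, though it does cleanly supply compatibility of $\rho$ with localization inside a chart.
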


\begin{theorem}
	Let $\cM$ be a simple holonomic $\sAV$-module of type $W \cong W_0\otimes \bk_{\lambda}$, where $W_0$ is a $\fsl_n$-module. Then, there exists a $\lambda$-charged $\cD$-module $\mathcal{F}$ such that 
	\[
	\cM \cong \mathcal{F} \otimes_{\cO} \mathcal{J}^{W_0}.
	\]
\end{theorem}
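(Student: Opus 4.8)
The plan is to imitate the proof of Theorem~\ref{theorem:sheavesofavmodulesaretensorproducct}, replacing the now-unavailable dual tensor module by its integrable $\fsl_n$-part and absorbing the non-integrable charge $\bk_\lambda$ into $\mathcal{F}$. Since $W_0$ is a finite-dimensional $\fsl_n$-module, regarding it as a $\gl_n$-module with trivial action of the identity matrix makes it integrable (it corresponds to $\lambda=0$), so both $\mathcal{J}^{W_0}$ and $\mathcal{J}^{W_0^{\ast}}$ are well-defined tensor modules. By Proposition~\ref{proposition:sameglnmoduleineverychart}, in each étale chart $U\subset X$ we have $\cM(U)\cong T(P_U,W)$ with $P_U$ a simple $\cD(U)$-module (or $P_U=0$). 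I would set $\cN=\cM\otimes_{\cO}\mathcal{J}^{W_0^{\ast}}$; using the tensor-module identity $T(P,W)\otimes_{\cO}T(\cO,W')\cong T(P,W\otimes W')$, locally
\[
\cN(U)\cong T\bigl(P_U,\,(W_0\otimes W_0^{\ast})\otimes\bk_\lambda\bigr).
\]

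First I would isolate the charge. As $\fsl_n$-modules, $W_0\otimes W_0^{\ast}$ contains the trivial module with multiplicity one, since $\Hom_{\fsl_n}(\bk,W_0\otimes W_0^{\ast})\cong\End_{\fsl_n}(W_0)\cong\bk$, so $(W_0\otimes W_0^{\ast})\otimes\bk_\lambda$ contains exactly one copy of $\bk_\lambda$. Let $\chi_\lambda$ be the central character of the $\gl_n$-module $\bk_\lambda$; a direct computation gives $\Omega_k\mapsto n\lambda^k$. Writing $W_0\otimes W_0^{\ast}=\bigoplus_i V_i$ as a sum of simple $\fsl_n$-modules with $V_0=\bk$ the unique trivial summand, each $V_i\otimes\bk_\lambda$ with $V_i$ non-trivial is a simple $\gl_n$-module not isomorphic to $\bk_\lambda$, so by Lemma~\ref{lemma:twoglnisisoifcharequal} its central character differs from $\chi_\lambda$. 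I then define $\mathcal{F}=\ker Z_{\chi_\lambda}$ inside $\cN$, which is an $\sAV$-submodule by the remark following Theorem~\ref{theorem:centerisasubsheaf}. Because the center $\mathcal{Z}$ is a trivial sheaf acting $\cO$-linearly through the $\gl_n$-action on the fibre, in each chart some element of $Z_{\chi_\lambda}$ acts as a nonzero scalar on every $T(P_U,V_i\otimes\bk_\lambda)$ with $V_i$ non-trivial, while $Z_{\chi_\lambda}$ annihilates $T(P_U,\bk_\lambda)$; hence $\mathcal{F}(U)\cong T(P_U,\bk_\lambda)$ with the same $\cD(U)$-module $P_U$ as above. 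Thus $\mathcal{F}$ is a $2$-differentiable $\sAV$-module of type $\bk_\lambda$, and therefore a $\lambda$-charged $\cD$-module by \cite[Theorem 15]{BB24}.

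It then remains to reconstruct $\cM$. Locally,
\[
\mathcal{F}(U)\otimes_{\cO}\mathcal{J}^{W_0}(U)\cong T(P_U,\bk_\lambda)\otimes_{\cO}T(\cO(U),W_0)\cong T(P_U,W_0\otimes\bk_\lambda)=T(P_U,W)\cong\cM(U).
\]
As in the proof of Theorem~\ref{theorem:sheavesofavmodulesaretensorproducct}, these chart-wise isomorphisms are compatible with the gluing data by construction: the $\lambda$-charged transition cocycle carried by $\mathcal{F}$, together with the $\GL_n$-cocycle $\widetilde{\rho}$ defining $\mathcal{J}^{W_0}$, reproduce exactly the transition law of $\cM$ on overlaps, with the determinantal $\lambda$-power of the Jacobian (as in Example~\ref{klambda}) supplied by $\mathcal{F}$ and the $W_0$-part supplied by $\widetilde{\rho}$. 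This yields the global isomorphism $\cM\cong\mathcal{F}\otimes_{\cO}\mathcal{J}^{W_0}$.

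The step I expect to be the main obstacle is the gluing, which is the only place the genuine non-integrability of $W$ intervenes: one must verify that the sub-sheaf $\mathcal{F}\subset\cN$ inherits precisely the transition law prescribed in the definition of a $\lambda$-charged $\cD$-module, so that its cocycle cancels the fractional power of the Jacobian occurring in the transition law of $\cM$ and leaves behind the honest $\GL_n$-cocycle of $\mathcal{J}^{W_0}$. By contrast, isolating the $\bk_\lambda$-summand is comparatively routine, once one records the multiplicity-one appearance of the trivial $\fsl_n$-module in $W_0\otimes W_0^{\ast}$ and invokes the separation of central characters from Lemma~\ref{lemma:twoglnisisoifcharequal}.
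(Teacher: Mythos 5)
Your proposal is correct and takes essentially the same route as the paper: the paper's own proof is a one-line reduction to Theorem~\ref{theorem:sheavesofavmodulesaretensorproducct}, replacing $Z_0$ by $Z_{\chi}$ for the central character $\chi$ of $\bk_{\lambda}$, which is precisely your construction $\mathcal{F}=\ker Z_{\chi_{\lambda}}$ inside $\cM\otimes_{\cO}\mathcal{J}^{W_0^{\ast}}$. Your extra details (the multiplicity-one occurrence of the trivial $\fsl_n$-summand in $W_0\otimes W_0^{\ast}$, the separation of central characters via Lemma~\ref{lemma:twoglnisisoifcharequal}, and the chart-wise identification $\mathcal{F}(U)\cong T(P_U,\bk_{\lambda})$) simply make explicit what the paper leaves implicit.
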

\begin{proof}
	The proof of this theorem follows the same steps as in Theorem \ref{theorem:sheavesofavmodulesaretensorproducct} but instead of taking $Z_0$, consider $Z_{\chi}$ where $\chi$ is the kernel of the central character corresponding to the $\gl_n$-module $\bk_{\lambda}$.

\end{proof}

There exists a holonomic $\sAV$-module of type $W$ for every $\gl_n$-module $W$. In~\cite{BB24}, the authors wrote explicit transformation laws for the sheaf $\sAV$. We may use the structure sheaf $\cO$ and these transformation laws to create holonomic $\sAV$-modules using any $\gl_n$-module $W$, as the next example shows.

\begin{example}
Let $W$ be a $\gl_n$-module, $\mathcal{C} = \{U_{\alpha}\}$ be a finite étale cover of $X$ and $U \subset X$ be an étale chart with uniformizing parameters $x_1,\dots,x_n$. We will use this open cover and the structure sheaf of $X$ to create a differentiable sheaf of $A\cV$-modules $\cM$ of type $W$ on $X$. As vector space, $\cM(U) = \cO(U) \otimes W$. The action of $\cO(U)$ is given by multiplication on the left side, while the action of the Lie algebra $\Theta(U)$ on $\cM(U)$ is given by
\[
\left (f \xpartial{x_i} \right )(gw) = f\fxpartial{g}{x_i}(x) w + \sum_{k=1}^n g \fxpartial{f}{x_k}(x) (E_{ki}w).
\]
For each $\alpha$, define $\cM(U_{\alpha}) = \cO(U_{\alpha} \cap U) \otimes W$. The action of $\cO(U_{\alpha})$ on $\cM(U_{\alpha})$ is given by left multiplication, while the action of the Lie algebra $\Theta(U_{\alpha})$ on $\cM(U_{\alpha})$ is
\begin{align*}
\left (f \xpartial{y_{i}} \right )(g \otimes w) =   f \fxpartial{g}{y_{i}} \otimes  w + g\sum_{a,k,j=1}^n \left ( f\frac{\partial^2 x_j}{\partial y_{a} \partial y_{i}} \fxpartial{y_{a}}{x_k}+ \fxpartial{f}{y_{a}}  \fxpartial{y_{a}}{x_k}\fxpartial{x_j}{y_i}\right ) \otimes E_{kj}w,
\end{align*}
where $y_1,\dots,y_n$ are the uniformizing parameters of $U_{\alpha}$. The transformation law for $\cM$ comes from the transformation law for $\cO$, i.e., $f_{U_{\alpha}}\otimes w \mapsto f_{U_{\beta}}\otimes w$. The resulting sheaf is a holonomic $\sAV$-module of type $W$, and its global sections are $\cM(X) = \cO(U) \otimes W$. If $W$ is integrable, then $\mathcal{J}^W$ is a $\sAV$-submodule of $\cM$. However, $\cM$ could be simple depending on $W$ and $X$.
\end{example}
The following example applies this construction to $\mathbb{P}^1$.

\begin{example}
Set $X= \mathbb{P}^1$. Consider the usual cover of $X$ given by $U_0 = \Spec(\bk[x])$ and $U_1 = \Spec(\bk[y])$ with the gluing $x \mapsto y^{-1}$. Let $\lambda \in \bk$ and consider the $A\cV$-module $\cM(U_0) =\cO(U_0)= \bk[x]$ on $U_0$, where the action of $\cO(U_0)$ is given by multiplication and the action of $\Theta(U_0) = \Der(\bk[x])$ is given by
\[
\left (x^k \frac{d}{dx} \right ) x^l = (l+\lambda k )x^{k+l-1}, \ k,l\in \Z_+.
\]
On the other hand, $\cM(U_1) = \cO(U_0 \cap U_1) = \bk[y,y^{-1}]$ as a vector space, the action of $\cO(U_1) = \bk[y]$ is given by left multiplication and
\[
\left ( y^k \frac{d}{dy} \right) y^l = \left (l-\lambda(2-k)\right )y^{k+l-1}, \ k \in \Z_+, \ l \in \Z.
\]
The gluing in $\cM(U_0 \cap U_1) = \cO(U_0 \cap U_1)$ is given by $x \mapsto y^{-1}$. The resulting $\sAV$-module $\cM$ is simple if and only if $\lambda$ is not a half-integer.
\end{example}

Let us give an example of an indecomposable $AV$-module $M$ on an affine variety, which is a free $A$-module, decomposes as a tensor product of a $D$-module $P$ with a tensor module, $M \cong P \otimes W$, where $P$ is not free over $A$, but only projective.
\begin{example}
Let $X$ be an affine elliptic curve $y^2 = t^3 - t$. Let $A$ be the algebra of polynomial functions on $X$, $A = \bk[t,y] / \left<y^2 - t^3 + t \right>$.
Lie algebra $V = \Der A$ of vector fields on $X$ is a free $A$-module,  $V = A \tau$ \cite{BF18}, generated by vector field
\[
\tau = 2y \xpartial{t} + (3t^2 -1) \xpartial{y}.
\]

Consider an ideal $P = \left< t, y \right>$ in $A$. Since ideal $P$ is not principal, as an $A$-module, $P$ is not free, but only projective and has rank 1. Let us denote the generators of $P$ by $T$ and $Y$. We can turn $P$ into a $D$-module by introducing the following gauge fields:
\[
\tau(T) = (t^2 + 1) Y, \ \ \tau(Y) = (t^3 + t) T.
\]
It is easy to check that these are compatible with the relation 
$y Y = (t^2 -1) T$ in $P$.

Next, consider the following rank 2 AV-module $A \otimes W$, where $W$ is a 2-dimensional vector space with basis $\{v, u\}$. Fix $\alpha \in \Z$ and define the action of $V$ by
\begin{align*}
f \tau \cdot gv &= f \tau(g) v + \alpha g \tau(f) v +  g \tau(\tau(f)) u, \\
f \tau \cdot gu &= f \tau(g) u + (\alpha + 1) g \tau(f) u.
\end{align*}
It is easy to check that $A\otimes W$ is an extension of two gauge modules.

Define $AV$-module $M$ as the tensor product $M = P \otimes_A (A \otimes W) \cong P \otimes_\bk W$.
Even though $P$ is not free over $A$, the tensor product $P \otimes W$ is a free $A$-module of rank 2 with generators $t T \otimes v + Y \otimes u$, $Y \otimes v + t T \otimes u.$

\end{example}

We conclude this paper by showing that simple differentiable holonomic $\sAV$-modules restrict to locally simple modules over the tangent sheaf, provided that the $\gl_n$-module that appears in $\cM$ is not isomorphic to $\Lambda^k(\bk^n)$, the $k$-th exterior product of the natural representation $\bk^n$ of $\gl_n$. The following proof is similar to the one given to an analogous statement about tensor modules for the Lie algebra of vector fields on the $n$-dimensional torus~\cite{R96} and the affine space~\cite{LLZ18}. 

\begin{theorem}\label{theorem:avmoduleremainssimpleasavmod}
Let $W$ be a simple finite-dimensional $\gl_n$-module and let $\cM$ be a simple holonomic $\sAV$-module of type $W$. If $W$ is not isomorphic to $\Lambda^k(\bk^n)$ for some $k=0,\dots,n$, then $\cM$ is a locally simple sheaf of $\Theta$-modules. 
\end{theorem}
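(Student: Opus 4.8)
The plan is to reduce the global statement to a local computation in a single étale chart, exploiting that local simplicity over $\Theta(U)$ is what needs checking and that the $\sAV(U)$-module structure is already understood via Proposition~\ref{proposition:sameglnmoduleineverychart}. Fix an étale chart $U$ with uniformizing parameters $x_1,\dots,x_n$, write $A=\cO(U)$, $\cV=\Theta(U)$, and by Proposition~\ref{proposition:sameglnmoduleineverychart} realize $\cM(U)\cong T(P,W)=P\otimes W$ with $P$ a simple $\cD(U)$-module and $W$ a simple finite-dimensional $\gl_n$-module. The task is to show that $T(P,W)$ has no proper nonzero $\cV$-submodule. The key structural input is the formula for the $\cV$-action in the definition of $T(P,W)$: for $f\in A$ and $p\otimes w$,
\[
f\xpartial{x_i}(p\otimes w)=\left(f\xpartial{x_i}p\right)\otimes w+\sum_{j=1}^n\fxpartial{f}{x_j}\,p\otimes E_{ji}w,
\]
where the higher Taylor terms vanish once $W$ is a $\gl_n$-module. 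The $\cD$-action on $P$ is already irreducible, so the obstruction to $\cV$-irreducibility lives entirely in the $\gl_n$-direction, controlled by the matrices $E_{ji}$ acting on $W$.

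First I would set up the standard ``separation of variables'' argument. Given a nonzero $\cV$-submodule $\cN\subset T(P,W)$, pick a nonzero element and use the $A$-module structure together with the Jacobson density theorem for the simple $\cD$-module $P$ to move it into a controlled form; concretely, I expect to show that $\cN$ must contain an element of the form $p_0\otimes w_0$ with $p_0\neq 0$ and $w_0\neq 0$, and then that $\cN\supseteq p_0\otimes W'$ for some nonzero $\gl_n$-submodule $W'\subseteq W$. Since $W$ is simple, $W'=W$, giving $p_0\otimes W\subseteq\cN$. Second, applying $\cD(U)\subset\sAV(U)$ and the simplicity of $P$ (via the density theorem) spreads this to $P\otimes W=\cN$. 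The crux is therefore the first reduction: that repeated application of operators $f\xpartial{x_i}$ forces the appearance of a pure tensor and then fills out a full $\gl_n$-submodule of $W$.

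The real content, and the main obstacle, is precisely where the hypothesis $W\not\cong\Lambda^k(\bk^n)$ enters. When one iterates the $\cV$-action and isolates the ``top Taylor degree'' contribution, the relevant operator on the $W$-factor is built from the products $\sum_j \xi_j E_{ji}$ (symbol-level), and one needs the associated map $\cV\otimes W\to (\text{something})\otimes W$, governed by the $\gl_n$-equivariant maps $\bk^n\otimes W\to\bk^n\otimes W$ and $\bk^n\otimes W\to W\otimes\bk^n$, to be surjective or injective enough to generate all of $W$ from any single weight vector. The exterior powers $\Lambda^k(\bk^n)$ are exactly the cases where a degeneracy occurs: the Cartan component or the relevant contraction/expansion map drops rank, so that a proper $\cV$-stable subspace can survive. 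Following the cited arguments of~\cite{R96} and~\cite{LLZ18}, I would analyze the $\gl_n$-module map $\bk^n\otimes W\to\bigoplus(\text{irreducible constituents})$ and show that the obstruction to irreducibility is a certain equivariant operator whose vanishing is equivalent to $W\cong\Lambda^k(\bk^n)$. The hard part is this representation-theoretic rank/surjectivity computation; I would handle it by decomposing $\bk^n\otimes W$ into irreducibles via the Pieri rule and identifying which highest weights $W$ make the pairing between $\bk^n\otimes W$ and its dual degenerate, matching these precisely with the fundamental exterior weights.

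Finally, having established local simplicity over $\cV=\Theta(U)$ for each chart $U$ in the finite étale cover, I would assemble the global statement: a sheaf of $\Theta$-modules is locally simple exactly when each $\cM(U)$ is simple over $\Theta(U)$, which is what the chart-wise argument provides. Since Proposition~\ref{proposition:sameglnmoduleineverychart} guarantees the same $W$ in every chart, the hypothesis $W\not\cong\Lambda^k(\bk^n)$ applies uniformly, and local simplicity holds in every chart, completing the proof.
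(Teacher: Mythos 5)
Your outer skeleton matches the paper's (reduce to an \'etale chart, write $\cM(U)\cong T(P,W)$, use Jacobson density on the simple $\cD$-factor, show the $W$-directions fill up unless $W$ is an exterior power), but two of your concrete steps fail and the crux is left unexecuted. First, and most seriously: a $\Theta(U)$-submodule $\cN\subset T(P,W)$ is \emph{not} assumed stable under multiplication by $\cO(U)$ or under $\cD(U)$ --- that is the entire difficulty of the theorem --- so your ``separation of variables,'' which applies the $A$-module structure and the density theorem to force a pure tensor into $\cN$, uses operators that do not preserve $\cN$. One must manufacture such operators out of vector fields alone. The paper does this with a second-order difference identity: for any $f\in A$,
\[
\sum_{k=0}^2(-1)^k\binom{2}{k}\left(x_l^k\partial_i\right)\left(fx_l^{2-k}\partial_j\right)(p\otimes w)=2fp\otimes(\delta_{il}E_{lj}-E_{li}E_{lj})w,
\]
which, together with $\partial_i(p\otimes w)=(\partial_i p)\otimes w$, realizes every operator $p\otimes w\mapsto up\otimes(\delta_{il}E_{lj}-E_{li}E_{lj})w$ with $u\in\aD$ inside the enveloping algebra of $\Theta(U)$; only after this does density legitimately give $P\otimes(\delta_{il}E_{lj}-E_{li}E_{lj})w_s\subset\cN$. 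Second, your step ``from $p_0\otimes w_0\in\cN$ deduce $\cN\supseteq p_0\otimes W'$ for a $\gl_n$-submodule $W'$'' is false as stated: $(x_a\partial_b)(p_0\otimes w_0)=(x_a\partial_b\, p_0)\otimes w_0+p_0\otimes E_{ab}w_0$, and the first summand need not lie in $\cN$. What is true, and what the paper uses, is that $\{w\in W\mid P\otimes w\subset\cN\}$ is $\gl_n$-stable, precisely because then the unwanted $\cD$-term already lies in $\cN$.

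Finally, the point where the hypothesis $W\not\cong\Lambda^k(\bk^n)$ enters is exactly what you defer: you assert that a Pieri-rule degeneracy analysis will match the exceptional weights with the fundamental exterior ones, but you never derive the equivariant operator in question, so this remains a plan rather than a proof. In the paper this step is elementary and explicit: if $\cN$ is proper, iterating the $(x_a\partial_b)$-action forces $(\delta_{il}E_{lj}-E_{li}E_{lj})\,U(\gl_n)w_q=0$, hence $(\delta_{il}E_{lj}-E_{li}E_{lj})W=0$; taking $i=j=l$ gives $E_{ll}^2=E_{ll}$ on $W$, so the highest weight $\lambda$ satisfies $\lambda(E_{ii})\in\{0,1\}$, and dominance then forces $W\cong\Lambda^k(\bk^n)$ --- no tensor-product decomposition is needed. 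Without the displayed identity, your proposal has no bridge from the $\Theta$-action to any $\gl_n$-relation on $W$, so the gap is genuine.
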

\begin{proof}
It is enough to prove that $\cM(U)$ is a simple $\Theta(U)$-module for each étale chart $U$. Let $U \subset X$ be an étale chart with uniformizing parameters $x_1,\dots,x_n$, $A = \cO(U)$, $\cV = \Der(A)$, and $\aD= \cD(U)$. Then, $\cM(U) \cong T(P,W)$ for some simple holonomic $\aD$-module $P$ and a simple finite-dimensional $\gl_n$-module $W$.  If $\partial_i = \xpartial{x_i}$, then
\begin{align*}
&\left (x_l^{k} \partial_i \right ) \left (fx_l^{m-k} \partial_j \right )(p \otimes w) \\
= & \left (x_l^{k} \partial_i \right ) \left ((fx_l^{m-k} \partial_j p) \otimes w + \sum_{s=1}^n \left ( (m-k) fx_l^{m-k-1} \partial_s(x_l) + x_l^{m-k}\partial_s(f) \right )p \otimes E_{sj} w \right ) \\
= & \left (\left ( fx_l^{m} \partial_i\partial_j +(m-k)f x_l^{m-1} \partial_i(x_l) \partial_j + x_l^m\partial_i(f) \partial_j \right )p \right )\otimes w \\
& + \sum_{r=1}^n  kfx_l^{m-1} \partial_r(x_l) \partial_j p \otimes E_{ri}w \\
& +(m-k) \sum_{s=1}^n  \left( \delta_{sl} f x_l^{m-1} \partial_i + \delta_{sl}x_l^{m-1} \partial_i(f) + (m-k-1)\delta_{sl}\delta_{il} f x_l^{m-2}  \right ) p \otimes E_{sj}w\\
& + \sum_{s=1}^n\left ( \left ( x_l^{m} \partial_s(f)\partial_i + (m-k) \delta_{li} x_l^{m-1} \partial_s(f) + x_l^m \partial_i(\partial_s(f)) \right ) p \right )\otimes E_{sj}w \\
& +  \sum_{r,s=1}^n \left (\left ( (m-k) k\delta_{rl} \delta_{sl} fx_l^{m-2} + \delta_{rl}k \partial_s(f) x_l^{m-1} \right ) p \right ) \otimes E_{ri}E_{sj}w. \\
\end{align*}
Therefore,
\begin{equation}\label{equation:actionofglnintensor}
\sum_{k=0}^2 (-1)^k \binom{2}{k} (x_l^k\partial_i )(fx_l^{2-k} \partial_j) (p \otimes w) = 2fp \otimes  (\delta_{il} E_{lj} - E_{li}E_{lj})w.
\end{equation}
Furthermore, 
\begin{equation}\label{equation:actionofpartials}
\partial_i ( p \otimes w ) = (\partial_i p) \otimes w.
\end{equation}
Suppose $N \subset \cM(U)$ and $v \in N$. Write
\[
v=  \sum_{s=1}^k p_s \otimes w_s.
\]
By equations~\eqref{equation:actionofglnintensor} and~\eqref{equation:actionofpartials},
\[
\sum_{i=1}^k up_q \otimes (\delta_{il} E_{lj} - E_{li}E_{lj})w_q \in N
\]
for every $u\in \aD$. By Schur's Lemma, for every $q \in P$ there exists $u_r \in \aD$ such that $u_r p_s = \delta_{rs} q$. Hence, $P \otimes (\delta_{il} E_{lj} - E_{li}E_{lj})w_s \subset N$ for each $s=1,\dots,k$. If $P \otimes w \subset N$, then $P \otimes E_{ab}w \subset N $ because 
\[
p \otimes E_{ab}w = \left ( x_{a}\xpartial{x_b} \right )( p \otimes w) - \left (x_a \xpartial{x_b}p \right ) \otimes w \in N.
\]
Therefore, the subspace $\{ w \in W \mid P \otimes w \subset N \}$ is a $\gl_n$-submodule of $W$, and we have that $(\delta_{il} E_{lj} - E_{li}E_{lj})w_q = 0$, for each $q=1,\dots,k$, since $W$ is simple and $N$ is a proper subspace of $M$. 

The term $p_s \otimes E_{a_1b_1}\cdots E_{a_kb_k} w_s $ appears in $\left ( x_{a_1}\partial_{b_k} \right )  \cdots \left ( x_{a_k}\partial_{b_k} \right )v$.
We may use the same argument given above to show that
\[
(\delta_{il} E_{lj} - E_{li}E_{lj})U(\gl_n)w_q = (\delta_{il} E_{lj} - E_{li}E_{lj}) W=0.
\]
Consequently, if $\lambda$ is the highest weight of $W$, then $\lambda( E_{ii}) \in \{0,1\}$ for each $i=1,\dots,n$. Because $W$ is a simple finite-dimensional $\gl_n$-module, $\lambda(E_{ii} - E_{i+1,i+1}) \in \Z_+$. Hence, there exists $k$ such that $\lambda(E_{ii})=1$ if $i \leq k$ and $\lambda(E_{ii})=0$ if $i>k$. We conclude that $W$ is isomorphic to $\Lambda^k(\bk^n )$, which contradicts our hypothesis. In conclusion, $M = \cM(U)$ is a simple $\Der(\cO(U))$-module. 
\end{proof}

This theorem generalizes a similar result established for gauge modules and Rudakov modules, see~\cite[Theorem 12]{BNZ21} and the next example.

\begin{example}
Suppose $X$ is affine and $W$ is an integrable $\gl_n$-module. Let $p \in X$ with associated maximal ideal $\fm_p$. Then, then $\fm_p \cV / \fm_p^2 \cV \cong \gl_n$ by~\cite[Lemma 6]{BFN19} and $W$ is a module over $\fm_p \cV$ by setting $(\fm_p^2\cV )W=0$. Furthermore, $W$ is also a module over $A$ by setting $f w = f(p)w$, where $f(p)\in \bk$ satisfy $f +\fm_p= f(p) + \fm_p$. The induced module 
\[
R_p(W) = A \# U(\cV ) \otimes_{A \# U(\fm_p)} W 
\]
is called the \emph{Rudakov module}. By~\cite[Lemma 22]{BB24}, the Rudakov module $R_p(W)$ is $2$-differentiable and the associated sheaf of $A\cV$-modules $\cM = \widetilde{R_p(W)}$ satisfy
\[
\cM(U) \cong T(\bk[\partial]\delta_p, W \otimes \bk_1),
\]
where $U\subset X$ is an étale chart and $\bk[\partial]\delta_p$ denotes the module over delta-functions on $U$ and $\bk_1$ denotes the one-dimensional $\gl_n$-module given by the trace of a matrix. Note that, by~\cite[Theorem 24]{BB24} and also as a consequence of Theorem~\ref{theorem:sheavesofavmodulesaretensorproducct}, 
\[
\cM \cong \mathcal{F}_p \otimes \mathcal{J}^{W\otimes \bk_1}
\]
if $W$ is integrable, where $\mathcal{F}_p$ denotes the $\cD$-module given by delta-functions with support at $p\in X$. By Proposition~\ref{theorem:avmoduleremainssimpleasavmod}, $R_p(W)$ is a simple $\cV$-module unless
\[
W \otimes \bk_1 \cong \Lambda^k (\bk^n),
\]
for some $k =0,\dots,n$. Since ${(\Lambda^k (\bk^n))}^{\ast}\cong \Lambda^{n-k}(\bk^n) \otimes \bk_1$, a Rudakov module remains simple as a $\cV$-module unless $W$ is isomorphic to the dual of $\Lambda^k(\bk^n)$. This was originally proved by André Zaidan in his PhD thesis~\cite[Theorem 50]{Zai20}.
\end{example}

\end{document}